\newtheorem{theorem}{Theorem}[section]
\newtheorem{lemma}[theorem]{Lemma}
\newtheorem{corollary}[theorem]{Corollary}
\newtheorem{prop}[theorem]{Proposition}
\theoremstyle{definition}
\newtheorem{definition}[theorem]{Definition}
\newtheorem{example}[theorem]{Example}
\numberwithin{equation}{section}
\newlength{\cellsize}
\newcommand\tableau[1]{
\vcenter{
\let\\=\cr
\baselineskip=-16000pt
\lineskiplimit=16000pt
\lineskip=0pt
\halign{&\tableaucell{##}\cr#1\crcr}}}
\newcommand{\tableaucell}[1]{{%
\def \arg{#1}\def \void{}%
\ifx \void \arg
\vbox to \cellsize{\vfil \hrule width \cellsize height 0pt}%
\else
\unitlength=\cellsize
\begin{picture}(1,1)
\put(0,0){\makebox(1,1)[c]{$#1$}}
\put(0,0){\line(1,0){1}}
\put(0,1){\line(1,0){1}}
\put(0,0){\line(0,1){1}}
\put(1,0){\line(0,1){1}}
\end{picture}%
\fi}}
\DeclareMathOperator{\sgn}{sgn}
\DeclareMathOperator{\comp}{comp}
\DeclareMathOperator{\rev}{rev}
\DeclareMathOperator{\set}{set}
\newcommand{\dI}{\mathfrak{S}^*}
\newcommand{\rdI}{\mathcal{R}\mathfrak{S}^*}
\newcommand{\rI}{\mathcal{R}\mathfrak{S}}
\newcommand{\Imm}{\mathfrak{S}}
\newcommand{\hdI}{\mathcal{H}\mathfrak{S}^*}
\DeclareMathOperator{\QSym}{QSym}
\DeclareMathOperator{\NSym}{NSym}
\DeclareMathOperator{\Des}{Des}
\DeclareMathOperator{\DesI}{Des_{\dI}}
\DeclareMathOperator{\rw}{rw}
\DeclareMathOperator{\stdz}{stdz}
\newcommand{\bQ}{\mathbb{Q}}
\newcommand{\bZ}{\mathbb{Z}}
\newcommand{\bB}{\mathbb{B}}
\newcommand{\Sym}{\ensuremath{\operatorname{Sym}}}
\newcommand{\Qsym}{\ensuremath{\operatorname{QSym}}}
\newcommand{\qsy}{\hat{\mathcal{S}}}	
\newcommand{\DI}{{\mathfrak{S}}^\ast}	         
\newcommand{\RI}{\mathcal{R}{\mathfrak{S}}^\ast} 
\newcommand{\rqsy}{\mathcal{R}\hat{\mathcal{S}}}	
\newcommand{\Nsym}{\ensuremath{\operatorname{NSym}}}
\newcommand{\nce}{\mathbf{e}}         	
\newcommand{\nch}{\mathbf{h}}         	
\newcommand{\ncr}{\mathbf{r}}           	
\newcommand{\nci}{{\mathfrak{S}}}	         
\newcommand{\ncri}{\mathcal{R}{\mathfrak{S}}} 
\newcommand{\des}{\mathrm{Des}} 
\newcommand{\suchthat}{\;|\;}
\newcommand\cellify[1]{\def\thearg{#1}\def\nothing{}%
\ifx\thearg\nothing
\vrule width0pt height\cellsize depth0pt\else
\hbox to 0pt{\usebox2\hss}\fi%
\vbox to 15\unitlength{
\vss
\hbox to 15\unitlength{\hss$#1$\hss}
\vss}}
\newcommand\expath[1]{%
\hbox to 0pt{\usebox3\hss}%
\vbox to 15\unitlength{
\vss
\hbox to 15\unitlength{\hss$#1$\hss}
\vss}}
\newcommand\bas[1]{\omit \vbox to \cellsize{ \vss \hbox to \cellsize{\hss$#1$\hss} \vss}}
\newcommand{\SIT}{\ensuremath{\operatorname{SIT}}}
\newcommand{\iffpsi}{\overset{\psi}{\iff}}
\newcommand{\I}{{\mathfrak{S}}}	 
\newcommand{\sheila}{\textcolor{black}}
\newcommand{\sheilaFeb}{\textcolor{black}} 
\newcommand{\svw}{\textcolor{black}}
\newcommand{\emn}[1]{\textcolor{black}{#1}}
\newcommand{\sheilaMar}{\textcolor{black}} 
\newcommand{\svwMar}{\textcolor{black}} 
\author[Niese, Sundaram, van Willigenburg, Vega, Wang]{Elizabeth Niese, Sheila Sundaram,\\ Stephanie van Willigenburg, Julianne Vega, Shiyun Wang}
\address{Elizabeth Niese: Marshall University, Huntington, WV 25755, USA} 
\email{niese@marshall.edu}
\address{Sheila Sundaram: Pierrepont School, Westport, CT 06880, USA}
\email{shsund@comcast.net}
\address{Stephanie van Willigenburg: University of British Columbia, Vancouver, BC V6T 1Z2, Canada}
\email{steph@math.ubc.ca}
\address{Julianne Vega: Kennesaw State University, Kennesaw, GA 30144, USA}
\email{jvega30@kennesaw.edu}
\address{Shiyun Wang: University of Southern California, Los Angeles, CA 90089-2532, USA}
\email{shiyunwa@usc.edu}
\title{Row-strict dual immaculate functions}
\date{\today}
\begin{document}
\subjclass{\svw{05A05, 05E05, 16T30.}}
\keywords{\svw{composition, creation operator, dual immaculate function, hook Schur function, Hopf algebra, Pieri rule, quasisymmetric function, Schur function, skew Schur function, tableau combinatorics.}}

\maketitle

\begin{abstract}

We define a new basis of quasisymmetric functions, the row-strict dual immaculate functions, as the generating function of a particular set of tableaux.  We show that this definition gives a function that can also be obtained by applying the  involution $\psi$ to the dual immaculate functions of Berg, Bergeron, Saliola, Serrano, and Zabrocki (2014) \svw{and establish numerous combinatorial properties for our functions.}  We give an equivalent formulation of our functions via Bernstein-like operators, in a similar fashion to Berg et. al (2014).   We conclude the paper by \svw{defining skew dual immaculate functions and hook dual immaculate functions, and establishing combinatorial properties for them.}  
\end{abstract}

\tableofcontents

\section{Introduction}\label{sec:intro}

\svw{Quasisymmetric functions were first defined formally by Gessel \cite{gessel} in relation to the theory of $P$-partitions, and have since grown to be a vibrant area of research in their own right, including playing a crucial role in the resolution of the Shuffle Conjecture \cite{carlsson2018proof}. As a natural nonsymmetric generalization of symmetric functions, one avenue of research has been to establish analogies of classical symmetric functions, for example monomial symmetric functions and chromatic symmetric functions. However an analogy with the ubiquitous Schur functions remained elusive until 2011, when  the authors of \cite{HLMvW2011}  discovered quasisymmetric Schur functions that naturally arose from the combinatorics of nonsymmetric Macdonald polynomials. These functions became the genesis of the now flourishing area of Schur-like functions throughout algebraic combinatorics, for example \svwMar{\cite{ALvW2021, CFLSX2014, CHMMW2022, JL2015, LMvW2013}.} Within the algebra of quasisymmetric functions, two further bases rose to attention: the dual immaculate functions \cite{BBSSZ2014}, and the row-strict quasisymmetric Schur functions \cite{MR2014}, the latter being quasisymmetric Schur functions via the  involution $\psi$. In this paper we will interpolate between these two bases to yield row-strict dual immaculate functions.
}

\svw{More precisely, quasisymmetric} Schur functions, all forms, \svw{can be} defined combinatorially as the generating function of composition fillings (resp. row-strict composition fillings) where there is a requirement that the first column strictly (resp. weakly) increases, each row increases weakly (resp. strictly), and a triple rule is satisfied.  The dual immaculate functions were introduced by Berg et al.~\cite{BBSSZ2014} as the dual basis of the noncommutative symmetric immaculate functions.  Combinatorially the dual immaculate functions can be viewed as the generating functions of composition fillings that satisfy just the first column and row requirements of the quasisymmetric \svwMar{Schur functions}, omitting the triple rule.  

The triple rules required to define all versions of quasisymmetric Schur functions allow those functions to retain \svw{many} of the combinatorial properties of Schur functions, including an RSK-style insertion algorithm, \svw{a JDT algorithm, a  Murnaghan-Nakayama rule, and  Littlewood-Richardson rules.}  Without the triple rule, some combinatorial similarities to Schur functions are lost, but others are gained.  \svw{For example,} the immaculate functions satisfy a noncommutative analogue of the Jacobi-Trudi rule.  

In this paper we define \emph{row-strict immaculate tableaux} of a given composition shape, and study their generating function.  \sheila{By identifying the correct descent set}, we show that our combinatorial definition of the row-strict dual immaculate functions is equivalent to applying the  involution $\psi$ to the dual immaculate functions \svw{in Theorem~\ref{the:RIasDI}}, and can also be obtained \sheila{from the Hopf algebra of noncommutative symmetric functions by suitably defined} creation operators \svw{in Theorem~\ref{thm:row-strict-via-creationops}}.  

We are able to quickly obtain many results from~\cite{BBSSZ2014} by application of the involution $\psi$  \svw{in Theorem~\ref{thm:psiresults}.} \emn{We also carefully construct skew row-strict dual immaculate functions} and define hook dual immaculate functions, \svw{obtaining results for them in our final two sections.}  \sheila{In this work} we focus primarily on combinatorial aspects of the row-strict dual immaculate functions. \sheila{We investigate 0-Hecke modules for these new functions  in~\cite{NSvWVW2022b}.}

\noindent \textbf{Acknowledgments.} The authors would like to thank \sheilaMar{Sarah Mason for bringing to their attention an incorrect equation  in the first version of the paper,}  \svwMar{ the referee for thoughtful comments, and} the Algebraic Combinatorics Research Community program at ICERM through which this research took place. The third author was supported in part by the National Sciences Research Council of Canada.
\section{Background}\label{sec:back} 

In this section we introduce much of the background on quasisymmetric and noncommutative symmetric functions needed for our results.  We refer the reader to~\cite{LMvW2013} for additional details. 

A {\em composition} of a positive integer $n$ is a sequence $\alpha = (\alpha_1,\ldots, \alpha_k)$ such that \svwMar{$\sum _i\alpha_i = n$.} We write $\alpha \vDash n$. \svw{We sometimes denote $n$ by $|\alpha|$ and $k$ by $\ell(\alpha)$.} The diagram of $\alpha = (\alpha_1,\ldots,\alpha_k)$ is a collection of left-justified boxes with $\alpha_i$ boxes in row $i$, where row $1$ is the bottom row.
\begin{example}\label{ex:comps}
For $\alpha = (3,1,4,2,5,1)$, the diagram is \svw{as follows.}
\[\tableau{{}\\{}&{}&{}&{}&{}\\{}&{}\\{}&{}&{}&{}\\{}\\{}&{}&{}}\]
\end{example}

Compositions of $n$ are in bijection with subsets of $\{1,2,\ldots,n-1\}$.  Given a composition $\alpha = (\alpha_1,\svw{\alpha _2 , }\ldots,\alpha_k)$ of $n$, the corresponding set is $\set(\alpha) = \{\alpha_1,\alpha_1+\alpha_2,\ldots,\alpha_1+\cdots+\alpha_{k-1}\}$.  For $\alpha=(3,1,4,2,5,1)$ \svw{that} is a composition of 16, $\set(\alpha) = \{3,4,8,10,15\} \subseteq \{1,2,\ldots,15\}$.
Given a subset $S=\{s_1<\svw{s_2}<\cdots<s_j\}$ of $\{1,\svw{2,} \ldots,n-1\}$, the corresponding composition of $n$ is $\comp(S)=(s_1,s_2-s_1,\ldots,s_j-s_{j-1},n-s_j)$.  For $S=\{2,3,5,9,10,14\} \subseteq\{1,\svw{2,} \ldots,15\}$, $\comp(S)=(2,1,2,4,1,4,2)$. The composition obtained by reversing the order of the parts of $\alpha$, the {\em reverse} of $\alpha$, is $\rev(\alpha) = (\alpha_k, \alpha_{k-1},\ldots, \alpha_1)$. 
The {\em complement} of a composition $\alpha$, denoted $\alpha^c,$ is the composition obtained from $\alpha$ by taking the complement of the set corresponding to  $\alpha$.  That is, 
$\alpha^c= \comp(\set(\alpha)^c)$.
The {\em transpose} of a composition $\alpha$, denoted $\alpha^t,$ is the composition obtained from $\alpha$ by taking the complement of the set corresponding to the reverse of $\alpha$.  That is, 
\[\alpha^t = \comp(\set(\rev(\alpha))^c).\]

For example, if $\alpha = (3,1,2,4)$, $\rev(\alpha) = (4,2,1,3)$, $\set(\rev(\alpha)) = \{4,6,7\}$, $\set(\rev(\alpha))^c = \{1,2,3,5,8,9\}$, so $\alpha^t = (1,1,1,2,3,1,1)$. \svwMar{Note that $\alpha ^ t = \rev(\alpha)^c = \rev(\alpha ^c)$.}

We will use several different orders on compositions. 
\sheila{
For compositions $\alpha$ and  $\beta$, we say $\alpha$ precedes $\beta$ in \emph{lexicographic order}, denoted by $\alpha\le_\ell \beta$, if either $\alpha_1<\beta_1$ or there is a $j>1$ such that $\alpha_j< \beta_j$ but 
$\alpha_i=\beta_i, 1\le i\le j-1$.}
We say that a composition $\beta = (\beta_1, \ldots, \beta_m)$ is a {\em refinement} of a composition $\alpha = (\alpha_1, \ldots, \alpha_k)$, denoted $\beta \preccurlyeq \alpha$, if each part of $\alpha$ can be obtained by adding consecutive parts of $\beta$.  Equivalently, we say that $\alpha$ is a {\em coarsening} of $\beta$.  For example, $\beta = (1,2,1,1,3,2)$ is a refinement of $\alpha = (3,2,5)$.  Finally, we use an order, defined in~\cite{BBSSZ2014}, where $\alpha\subset_s \beta$ if
\begin{enumerate}
    \item $|\beta|=|\alpha|+s$,
    \item $\alpha_j\leq \beta_j ,\,\, \forall~ 1\leq j\leq \svw{\ell(\alpha)}$, and
    \item \svw{$\ell(\beta)\leq \ell(\alpha)+1$.}
\end{enumerate}
\svw{Note that the last two parts guarantee that $\ell(\alpha)\leq \ell (\beta) \leq \ell (\alpha)+1$. If we have only the second condition then this is denoted $\alpha \subseteq \beta$.}

A function $f\in \bQ[[x_1,x_2,\ldots]]$ is {\em quasisymmetric} if the coefficient of $x_1^{\alpha_1}\svw{x_2^{\alpha_2}}\cdots x_k^{\alpha_k}$ is the same as the coefficient of $x_{i_1}^{\alpha_1}x_{i_2}^{\alpha_2}\cdots x_{i_k}^{\alpha_k}$ for every $(\alpha_1,\svw{\alpha _2,}\ldots,\alpha_k)$ and $i_1<i_2<\cdots <i_k$.  The set of all quasisymmetric functions forms a \svw{Hopf algebra} graded by degree, $\QSym = \bigoplus_{n} \QSym_n$, where each $\QSym_n$ is a vector space over $\bQ$ with bases indexed by compositions of $n$. 

The pertinent bases for our purposes include the {\em monomial}, {\em fundamental}, {\em dual immaculate}, and {\em quasisymmetric Schur} bases.  We define the monomial and fundamental bases here and defer the remaining definitions until later.  

Given a composition $\alpha = (\alpha_1,\alpha_2,\ldots,\alpha_k)$ of $n$, the {\em monomial quasisymmetric function} is 
\[M_\alpha = \sum_{\substack{(i_1,i_2,\ldots,i_k)\\\svw{i_1<i_2<\cdots<i_k}}} x_{i_1}^{\alpha_1}x_{i_2}^{\alpha_2}\cdots x_{i_k}^{\alpha_k}.\]
A second important quasisymmetric basis is the fundamental basis.  Given a composition $\alpha=(\alpha_1,\alpha_2,\ldots,\alpha_k)$ of $n$, the {\em fundamental quasisymmetric function} indexed by $\alpha$ is 
\[F_\alpha(x_1,x_2,\ldots) = \sum_{\substack{i_1\leq i_2\leq \cdots \leq i_n\\i_j=i_{j+1} \Rightarrow j\notin\set(\alpha)}} x_{i_1}x_{i_2}\cdots x_{i_n}.\]
 
Note that 
\begin{equation}\label{eq:FtoM}
F_\alpha = \sum_{\beta \preccurlyeq \alpha} M_\beta \quad \text{ and } \quad M_\alpha = \sum_{\svw{\beta \preccurlyeq \alpha}} (-1)^{\ell(\alpha) - \ell(\beta)} F_\beta.
\end{equation}

In~\cite{GKLLRT1995} the {\em noncommutative symmetric functions} are defined as the algebra 
$\NSym=\bQ\langle\nce_1,\nce_2,\ldots\rangle$ generated by noncommuting indeterminates $\nce_n$ of degree $n$.  The set of noncommutative symmetric functions forms a graded \svw{Hopf} algebra $\NSym=\bigoplus_{n} \NSym_n$ where the degree of functions in $\NSym_n$ is $n$.  Each $\NSym_n$ has bases indexed by compositions of $n$.  

The $n$th elementary noncommutative symmetric function is the indeterminate $\nce_n$, where $\nce_0=1$.  Given a composition $\alpha = (\alpha_1,\ldots, \alpha_k)$, we define the {\em elementary noncommutative symmetric function} by 
\[\nce_{\alpha} = \nce_{\alpha_1}\cdots \nce_{\alpha_k}.\]

The $n$th complete homogeneous noncommutative symmetric function is defined by 
\[\nch_n = \sum_{(\alpha_1,\ldots,\alpha_m) \vDash n} (-1)^{n-m} \nce_\alpha\] with $\nch_0=1$.  Then, for $\alpha = (\alpha_1,\ldots, \alpha_k)$, the {\em complete homogeneous noncommutative symmetric function} is defined \svw{by}
\[\nch_\alpha = \nch_{\alpha_1} \cdots \nch_{\alpha_k}.\]

We can write $\nch_\alpha$ in terms of the elementary noncommutative symmetric functions by 
\begin{equation}\label{eq:nchasnce}
\nch_\alpha = \sum_{\beta \preccurlyeq \alpha} (-1)^{|\alpha| - \ell(\beta)} \nce_\beta
\end{equation}
where the sum is over all $\beta$ that refine $\alpha$.  

The {\em noncommutative ribbon Schur function} is defined by 
\begin{equation}\label{eq:ribbon-to-homogeneous-nsym}
\ncr_\alpha = \sum_{\beta \succcurlyeq \alpha} (-1)^{\ell(\alpha) - \ell(\beta)}\nch_\beta
\end{equation}
where the sum is over all $\beta$ that are coarsenings of $\alpha$.

As Hopf algebras, \svwMar{ $\NSym$ and $\QSym$} are dual with the pairing 
\[\langle \nch_\alpha, M_\beta\rangle = \delta_{\alpha\beta}\] and \[\langle \ncr_\alpha, F_\beta\rangle = \delta_{\alpha\beta}\] where $\delta_{\alpha\beta}$ is 1 if $\alpha = \beta$ and 0 otherwise. 

Recall that in $\Sym$ there is an automorphism $\omega: \Sym \rightarrow \Sym$ such that $\omega(s_\lambda) = s_{\lambda'}$ where $\lambda'$ is the transpose of the partition $\lambda$ and $s_\lambda$ denotes the symmetric Schur function.  In $\QSym$ we have three involutive automorphisms~\cite{LMvW2013}, $\psi, \rho,$ and $\omega$ defined on the fundamental basis by 
\begin{align}
\psi(F_\alpha) &= F_{\alpha^c} \label{eqn:psiF}\\
\rho(F_\alpha) &= F_{\rev(\alpha)}; \sheilaMar{\text{ note that  } F_{\rev(\alpha)}(x_1,\ldots,x_n) =F_\alpha(x_n,\ldots, x_1) }\label{eqn:rhoF}\\
\omega(F_\alpha) &= F_{\alpha^t}. \label{eqn:omegaF}
\end{align}
These maps all commute and $\omega = \rho \circ \psi = \psi \circ \rho$.
 
\sheilaMar{
Observe that more generally~\eqref{eqn:rhoF} implies that, for any $f\in\QSym$,
\begin{equation}\label{eqn:rho-any-qsymfn} \rho(f)\,(x_1,\ldots,x_n) =f(x_n,\ldots, x_1).
\end{equation}
}

\sheilaMar{
For completeness, we give a proof of the second statement in~\eqref{eqn:rhoF}, which we were unable to find in the literature. Using the fact  that $j\in \set(\rev(\alpha)) \iff n-j\in \set(\alpha),$ we have 
\begin{align*} F_{\rev(\alpha)}(x_1,\ldots,x_n)
&=\sum_{\substack{1\le i_1\le \cdots\le i_n\le n\\i_j<i_{j+1} \text { if } j\,\in\, \set(\rev(\alpha))}} x_{i_1}\cdots x_{i_n}\\
&=\sum_{\substack{1\le i_1\le \cdots\le i_n\le n\\i_j<i_{j+1} \text { if } n-j\,\in\, \set(\alpha)}} x_{i_1}\cdots x_{i_n}\\
&= \sum_{\substack{n\ge  i_n\ge \cdots\ge i_1\ge 1\\i_{n-j+1}>i_{n-j} \text { if } j\,\in\, \set(\alpha)}} x_{i_n}\cdots x_{i_1}\\&=F_\alpha(x_n, \ldots, x_1).
\end{align*}
Finally the truth of~\eqref{eqn:rho-any-qsymfn}  is evident upon passing to the fundamental expansion of $f\in\QSym$.}

There are corresponding involutions in $\NSym$, denoted by the same letters, and defined on the noncommutative ribbon basis by 
\begin{align}
\psi(\ncr_\alpha) &= \ncr_{\alpha^c}  \quad &\psi(\ncr_\alpha\ncr_\beta) &= \psi(\ncr_\alpha)\psi(\ncr_\beta) \label{eqn:psinsym}\\
\rho(\ncr_\alpha) &= \ncr_{\rev(\alpha)} \quad  &\rho(\ncr_\alpha\ncr_\beta) &= \rho(\ncr_\beta)\rho(\ncr_\alpha) \label{eqn:rhonsym}\\
\omega(\ncr_\alpha)& = \ncr_{\alpha^t} \quad   &\omega(\ncr_\alpha\ncr_\beta) &= \omega(\ncr_\beta)\omega(\ncr_\alpha).\label{eqn:omegansym}
\end{align}
In $\NSym$, $\rho$ and $\omega$ are anti-automorphisms while $\psi$ is an automorphism. 
We also have that $\psi(\nch_\alpha) = \nce_\alpha$, $\rho(\nch_\alpha) = \nch_{\rev(\alpha)}$ and $\omega(\nch_\alpha) = \nce_{\rev(\alpha)}$. 

\begin{prop}\label{prop:invariance-of-pairing-under-psi}
The pairing between \svwMar{$\Nsym$  and $\QSym$} is invariant under the map $\psi$.  That is, for $F\in \Qsym$ and ${\bf g}\in \Nsym$, we have 
\[\langle {\bf g}, F  \rangle=\langle \psi({\bf g}), \psi(F)  \rangle. \]
\begin{proof}
It suffices to check that the equality holds for the noncommutative ribbon basis elements ${\bf g}=\ncr_\alpha$ and the basis of fundamental quasisymmetric functions $F=F_\beta$, where $\alpha, \beta$ are compositions of $n$.  But this is clear from the preceding definitions.
\end{proof}
\end{prop}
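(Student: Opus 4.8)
The plan is to reduce the identity to dual bases and then invoke only the definitions already recorded in the Background. By bilinearity of the pairing and linearity of $\psi$ on each of $\Qsym$ and $\Nsym$, it suffices to verify the equality when $\mathbf{g}$ runs over the noncommutative ribbon basis $\{\ncr_\alpha\}$ and $F$ runs over the fundamental basis $\{F_\beta\}$. Moreover $\psi$ is degree-preserving and the pairing vanishes between graded components of different degree, so the only nontrivial cases are those with $\alpha,\beta \vDash n$ for a single fixed $n$; the cross-degree cases give $0 = 0$ automatically.

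First I would record the left-hand side: by the duality of the ribbon and fundamental bases, $\langle \ncr_\alpha, F_\beta\rangle = \delta_{\alpha\beta}$. Next I would compute the right-hand side using the definition of $\psi$ on the ribbon basis in~\eqref{eqn:psinsym} and on the fundamental basis in~\eqref{eqn:psiF}, namely $\psi(\ncr_\alpha) = \ncr_{\alpha^c}$ and $\psi(F_\beta) = F_{\beta^c}$. Hence $\langle \psi(\ncr_\alpha), \psi(F_\beta)\rangle = \langle \ncr_{\alpha^c}, F_{\beta^c}\rangle = \delta_{\alpha^c,\beta^c}$, again by duality of the ribbon and fundamental bases.

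Finally I would observe that $\alpha \mapsto \alpha^c$ is an involution on the set of compositions of $n$: it is defined by complementing the associated subset of $\{1,2,\ldots,n-1\}$, and set complementation is an involution, so in particular $\alpha \mapsto \alpha^c$ is a bijection. Therefore $\delta_{\alpha^c,\beta^c} = \delta_{\alpha\beta}$, which matches the left-hand side. This establishes the identity on all pairs of basis elements, and bilinearity then gives it for arbitrary $F \in \Qsym$ and $\mathbf{g} \in \Nsym$.

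I do not anticipate a genuine obstacle here; the statement is essentially bookkeeping. The only points deserving an explicit mention are that $\psi$ preserves degree (legitimizing the reduction to a fixed $n$) and that complementation of compositions is a bijection (so the Kronecker deltas agree). If one wished to avoid appealing to the explicit formula for $\psi$ on the ribbon basis, an alternative would be to combine $\psi(\nch_\alpha) = \nce_\alpha$ with the expansions~\eqref{eq:ribbon-to-homogeneous-nsym} and~\eqref{eq:FtoM}, but this is strictly more work, so the ribbon/fundamental route above is the one I would present.
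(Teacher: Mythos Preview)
Your proposal is correct and follows exactly the approach of the paper: reduce by bilinearity to the dual pair $(\ncr_\alpha, F_\beta)$, apply the definitions $\psi(\ncr_\alpha)=\ncr_{\alpha^c}$ and $\psi(F_\beta)=F_{\beta^c}$, and conclude via $\delta_{\alpha^c,\beta^c}=\delta_{\alpha\beta}$. The paper compresses all of this into the phrase ``clear from the preceding definitions,'' so your version is simply a fuller write-up of the same argument.
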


Recall from ~\cite[Section 3.4.2]{LMvW2013}, the  \emph{forgetful} map \[\chi:\text{NSym}\longrightarrow \text{Sym}\]
satisfying $\chi(\nce_n)=e_n$, \svwMar{where $e_n$ is the $n$th elementary symmetric function, and similarly we will denote the $n$th complete homogeneous symmetric function by $h_n$.}  For a composition $\alpha\vDash n$, as in \cite[\svw{Section 2.2}]{LMvW2013}, let $\tilde{\alpha}$  be the partition of $n$ obtained by taking the parts of $\alpha$ in \svw{weakly} decreasing order.  Then 
\[\chi(\nch_\alpha)=h_{\tilde{\alpha}}, \quad
\chi(\nce_\alpha)=e_{\tilde{\alpha}}.\]

\begin{prop}\label{prop:chi-psi-Nsym}
 For \svwMar{${\bf g} \in \NSym$, $(\chi \circ \psi)({\bf g})=(\omega \circ \chi)({\bf g}).$}
\end{prop}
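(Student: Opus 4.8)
The plan is to reduce the identity to a computation on a single convenient basis of $\NSym$ and then compare both sides termwise. Since $\psi$, $\chi$, and $\omega$ are all $\bQ$-linear, the two composite maps $\chi\circ\psi$ and $\omega\circ\chi$ from $\NSym$ to $\Sym$ are linear, so it suffices to show they agree on the complete homogeneous basis $\{\nch_\alpha : \alpha \vDash n,\ n\ge 0\}$. This basis is the natural choice here because the action of $\psi$ on it was recorded above as $\psi(\nch_\alpha)=\nce_\alpha$.

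First I would compute the left-hand side: for a composition $\alpha$,
\[
(\chi\circ\psi)(\nch_\alpha) = \chi(\nce_\alpha) = e_{\tilde\alpha},
\]
using $\psi(\nch_\alpha)=\nce_\alpha$ and the stated value $\chi(\nce_\alpha)=e_{\tilde\alpha}$, where $\tilde\alpha$ is the partition obtained by sorting the parts of $\alpha$ into weakly decreasing order. Next I would compute the right-hand side:
\[
(\omega\circ\chi)(\nch_\alpha) = \omega(h_{\tilde\alpha}) = e_{\tilde\alpha},
\]
using $\chi(\nch_\alpha)=h_{\tilde\alpha}$ together with the classical fact that the symmetric-function involution $\omega$ sends $h_\lambda$ to $e_\lambda$ for every partition $\lambda$ (this follows from $\omega(s_\lambda)=s_{\lambda'}$, or equivalently from $\omega(h_n)=e_n$ and multiplicativity of $\omega$). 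Comparing, both composites send $\nch_\alpha$ to $e_{\tilde\alpha}$, and since they agree on a basis of $\NSym$ they coincide.

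As an alternative I could instead verify the identity only on the algebra generators $\nce_n$, $n\ge 1$: there $(\chi\circ\psi)(\nce_n)=\chi(\nch_n)=h_n$ (since $\psi$ is an involution, $\psi(\nce_n)=\nch_n$) while $(\omega\circ\chi)(\nce_n)=\omega(e_n)=h_n$. For this route one must additionally observe that both $\chi\circ\psi$ and $\omega\circ\chi$ are algebra homomorphisms $\NSym\to\Sym$; this is where the one genuine point of care lies, namely that although $\rho$ and $\omega$ are \emph{anti}-automorphisms of $\NSym$, the map $\psi$ is an honest automorphism of $\NSym$, so composing it with the algebra map $\chi$ again yields an algebra map (and likewise $\omega$ is an algebra automorphism of $\Sym$). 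Given that, agreement on the generators $\nce_n$ forces agreement everywhere. I expect no real obstacle beyond keeping track of this homomorphism-versus-antihomomorphism distinction; the first (basis) argument sidesteps it entirely, so that is the version I would write up.
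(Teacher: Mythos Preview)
Your proposal is correct and matches the paper's proof almost verbatim: the paper also checks the identity on the basis $\{\nch_\alpha\}$, computing $\chi(\psi(\nch_\alpha))=\chi(\nce_\alpha)=e_{\tilde\alpha}=\omega(h_{\tilde\alpha})=\omega(\chi(\nch_\alpha))$. Your additional remarks about the generator-based alternative and the automorphism-versus-antiautomorphism issue are sound but not needed here.
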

\begin{proof} It suffices to verify the equality for the basis \svw{elements} $\nch_\alpha$.  We have 
\[\chi(\psi(\nch_\alpha))=\chi(\nce_\alpha)=e_{\tilde{\alpha}}=\omega(h_{\tilde{\alpha}})=\omega(\chi(\nch_\alpha)),\]
as claimed, \svwMar{where $e_{\tilde{\alpha}}=\omega(h_{\tilde{\alpha}})$ follows by the definition of $\omega$ in $\Sym$.}
\end{proof}

\subsection{Dual immaculate functions}\label{subsec:dualimm}

The immaculate functions $\mathfrak{S}_\alpha$ are a basis of $\NSym$  formed by iterated creation operators~\cite{BBSSZ2014}.  Their \svw{duals} in $\QSym$ \svw{form the basis consisting of} \svw{dual immaculate functions}, $\dI_\alpha$.  These functions can be defined combinatorially as the generating function for \svw{ immaculate tableaux}.  
\begin{definition}
Given a composition $\alpha$, an \emph{immaculate tableau} \svwMar{of \emph{shape} $\alpha$} is a filling, $D$, of the cells of the diagram of $\alpha$ with positive integers such that 
\begin{enumerate} 
\item The leftmost column entries strictly increase from bottom to top.
\item The row entries weakly increase from left to right.
\end{enumerate}
\end{definition}
An   immaculate tableau  of shape $\alpha\vDash n$ is \emph{standard} if it is filled with distinct entries taken from $\{1,2,\ldots,n\}.$  
Given an immaculate tableau $D$, we form a {\em content monomial}, $x^D$, by setting the exponent of $x_i$ to be $d_i$, the number of $i$'s in the tableau $D$, \svw{namely,} $x^D = x_1^{d_1}x_2^{d_2}\cdots x_{k}^{d_k}$. \sheila{We call the vector $(d_1,d_2,\ldots)$ the \emph{content} of the tableau $D$.  In particular a standard tableau of shape $\alpha\vDash n$ has content equal to the composition  $(1^n)$.}
\begin{definition} \label{def:dIfunction}
The {\em dual immaculate function} indexed by the composition $\alpha$ is \[\dI_\alpha = \sum_D x^D\] where the sum is over all immaculate tableaux of shape $\alpha$.
\end{definition}

We can rewrite the dual immaculate functions in terms of the fundamental basis as a sum over \svw{standard} immaculate tableaux.  
To do this, we first standardize each immaculate tableau and define a descent set on the standard immaculate tableaux.  
The {\em reading word} of an immaculate tableau $D$ is obtained by reading the entries of $D$ from left to right starting with the top row.  We can standardize a semi-standard tableau (repeated entries allowed) by replacing all the 1's in the reading word by 1,2,\ldots, in reading order, then the 2's, etc.

\begin{example}\label{ex:imtab}
Here is an immaculate tableau of shape $\alpha = (3,2,4,1,2)$ that has reading word 6\,7\,5\,3\,4\,4\,5\,2\,2\,1\,1\,2, and its standardization.
\[T=\tableau{6&7\\5\\3&4&4&5\\2&2\\1&1&2} \qquad S=\tableau{11&12\\9\\6&7&8&10\\3&4\\1&2&5}\]
\end{example}

\sheila{For a composition $\alpha$, let $\SIT(\alpha)$ denote the set of standard immaculate tableaux of  shape $\alpha$.}

\begin{definition}\label{def:dessetdI}\cite[Definition~3.20]{BBSSZ2014}
Given a standard immaculate tableau $S$, the {\em descent set} of $S$, denoted $\DesI(S)$, is 
\[\DesI(S)=\{i: i+1 \text{ appears strictly above }i \text{ in } S \}.\] 
\sheila{We refer to $\DesI(S)$ as the \emph{$\dI$-descent set} of $S$, and to its associated composition $\comp(\DesI(S))$ as the \emph{$\dI$-descent composition} of $S$.}
\end{definition} For the standard immaculate tableau in Example~\ref{ex:imtab}, $\DesI(S) = \{2,5,8,10\}$.

Then \cite[Proposition~3.37]{BBSSZ2014}
\begin{equation}\label{eqn:dI-fund}\dI_\alpha = \sum_{S\in\SIT(\alpha)} F_{\comp(\DesI(S))},\end{equation} 
where the sum is over all standard immaculate tableaux of shape $\alpha$.

\section{Row-strict dual immaculate functions}\label{sec:RSDI}
In this section we start with a combinatorial definition of a new quasisymmetric \svw{function} that we call the \svw{ row-strict dual immaculate function.}

\begin{definition} Given a composition $\alpha$, a \emph{row-strict immaculate tableau} \svwMar{of \emph{shape} $\alpha$} is a filling, $U$, of the \svwMar{cells of the} diagram of $\alpha$ with positive integers such that 
\begin{enumerate}
\item The leftmost column entries weakly increase from bottom to top.
\item The row entries strictly increase from left to right.
\end{enumerate}

\svwMar{We now define our new function, where $x^U$ is the content monomial of the tableau $U$, defined just before Definition~\ref{def:dIfunction}. The content of the tableau $U$ is also defined just before Definition~\ref{def:dIfunction}.}

\svwMar{\begin{definition}\label{def:rdI function}
    The {\em row-strict dual immaculate function} indexed by $\alpha$ is $$ \rdI_\alpha = \sum_U x^U$$where the sum is over all row-strict immaculate tableaux of {shape} $\alpha$. 
\end{definition}}

We say the row-strict tableau $U$ is standard if $x^U=x_1\cdots x_n$. Thus standard row-strict immaculate tableaux coincide with standard immaculate tableaux.
\end{definition}
As before, standardization provides us with a way to expand $\rdI_\alpha$ in terms of the fundamental basis using only standard  tableaux.  

\begin{definition}\label{def:rsreadingword}
Given a row-strict immaculate tableau $T$, the \emph{row-strict immaculate reading word} of $T$, denoted $\rw_{\rdI}(T)$, is the word obtained by reading the entries in the rows of $T$ from right to left starting with the bottom row and moving up.
\end{definition}

To {\em standardize} a row-strict immaculate tableau $T$, replace the 1's in $T$ with \svw{$1,2,\ldots,$} in the order they appear in $\rw_{\rdI}(T)$, then the 2's, etc.  

\begin{definition}\label{def:rsdesset}
The \svw{\emph{descent set}} of a standard row-strict immaculate tableau \svw{$T$} is the set 
\[\Des_{\rdI}(T)=\{i:i+1\text{ is weakly below } i \text{ in } \svw{T}\}.\]
\sheila{We refer to $\Des_{\rdI}(T)$ as the \svwMar{\emph{$\rdI$-descent set}} of $T$, and to its associated composition $\comp(\Des_{\rdI}(T))$ as the \svwMar{\emph{$\rdI$-descent composition} of $T$.}}
\end{definition}
\begin{example}
Consider the row-strict immaculate tableau 
\[T=\tableau{4\\3&4&5&6\\2&5\\1&2&6}\]
The row-strict immaculate reading word of $T$ is $6\,2\,1\,5\,2\,6\,5\,4\,3\,4$ and the corresponding standardized row-strict immaculate tableau is 
\[S=\tableau{6\\4&5&8&10\\3&7\\1&2&9}\] 

Here $\Des_{\rdI}(T) = \{1,4,6,8\}$.

\end{example}

The row-strict dual immaculate functions expand positively in the fundamental basis. 
\begin{theorem}\label{thm:rsfunddecomp}
Let $\alpha \vDash n$.  Then 
\[\rdI_\alpha = \sum_{S} F_{\comp(\Des_{\rdI}(S))}\]
where the sum is over all standard row-strict immaculate tableaux of shape $\alpha$.
\end{theorem}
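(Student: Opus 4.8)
The plan is to prove this the standard way for tableau-generating functions: group the row-strict immaculate tableaux of shape $\alpha$ according to their standardizations, and show that the tableaux standardizing to a fixed standard tableau $S$ contribute exactly $F_{\comp(\Des_{\rdI}(S))}$. First I would recall that, by definition, $\rdI_\alpha = \sum_U x^U$ over all row-strict immaculate tableaux $U$ of shape $\alpha$, and that standardization sends each such $U$ to a standard row-strict immaculate tableau $S=\stdz(U)$ of the same shape (this needs a brief check that the strict-row / weak-column conditions are preserved by the standardization procedure of Definition~\ref{def:rsdesset}'s preamble, reading within each value-class in the order given by $\rw_{\rdI}$). So $\rdI_\alpha = \sum_{S\in\SIT(\alpha)} \big(\sum_{U:\,\stdz(U)=S} x^U\big)$, and it suffices to prove, for each fixed $S$, that $\sum_{U:\,\stdz(U)=S} x^U = F_{\comp(\Des_{\rdI}(S))}$.

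The heart of the argument is a bijection between $\{U : \stdz(U)=S\}$ and the monomials appearing in $F_{\comp(\Des_{\rdI}(S))}$, i.e.\ weakly increasing sequences $i_1\le i_2\le\cdots\le i_n$ (indexed by the reading-word positions of $S$) satisfying $i_j = i_{j+1}\Rightarrow j\notin\set(\comp(\Des_{\rdI}(S)))= \Des_{\rdI}(S)$. Given $U$ with $\stdz(U)=S$, list the entries of $U$ in the order they occur in $\rw_{\rdI}(U)$ (equivalently, read $S$'s cells in reading-word order and record $U$'s entry in each); the key observation to establish is that this sequence is weakly increasing along the reading word and is strictly increasing exactly at the positions $j\in\Des_{\rdI}(S)$ — at a non-descent $j$ the two cells lie in the same row with the $S$-cell containing $j$ strictly to the right of the one containing $j+1$... wait, one must be careful about which direction the reading word runs; since $\rw_{\rdI}$ reads rows right-to-left, consecutive values $j,j+1$ of $S$ with $j+1$ weakly below $j$ are forced to jump (descent), while $j+1$ strictly above $j$ forces them into the same row, adjacent in reading order, where equality of $U$-entries is allowed precisely because rows of $U$ are \emph{strictly} increasing left-to-right — so equal values in $U$ on the same row cannot be adjacent cells, but in reading-word order consecutive standard labels $j,j+1$ with $j+1$ above $j$ need not be adjacent cells... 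This is exactly the subtle point, so the cleaner route is: show $j\notin\Des_{\rdI}(S)$ iff in every $U$ with $\stdz(U)=S$ the entries in the cells labelled $j$ and $j+1$ may be equal, and $j\in\Des_{\rdI}(S)$ iff they must be strictly increasing in reading order; then read off that the content monomials $x^U$ range over exactly the monomials of $F_{\comp(\Des_{\rdI}(S))}$.

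Concretely I would prove the two directions as follows. (i) Given $S$ and a legal weakly increasing word $i_1\le\cdots\le i_n$ for $\comp(\Des_{\rdI}(S))$, define $U$ by replacing the label $\ell$ in $S$'s cell by $i_\ell$; one checks $U$ is a row-strict immaculate tableau (weak increase up the first column and strict increase along rows — using that within a row the labels of $S$ increase left to right so a non-descent between them forces $i$-values to increase, while across rows the column condition follows from $S$ being standard immaculate and the ordering of the word) and that $\stdz(U)=S$. (ii) Conversely, given $U$ with $\stdz(U)=S$, the induced reading-word value sequence is a legal word for $\comp(\Des_{\rdI}(S))$, by the descent analysis above. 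These two maps are mutually inverse, giving the claimed equality, and summing over $S\in\SIT(\alpha)$ finishes the proof.

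The main obstacle I anticipate is getting the descent-set bookkeeping exactly right: verifying that $\Des_{\rdI}$ from Definition~\ref{def:rsdesset} ($i+1$ weakly below $i$) is precisely the set of forced-strict positions \emph{relative to the right-to-left, bottom-to-top reading word} $\rw_{\rdI}$, and in particular handling the within-a-row case correctly — when $i+1$ is strictly above $i$ in $S$ the two occur in different rows but the non-descent must still translate into "equality permitted" in $U$, which relies on a careful comparison of their positions in $\rw_{\rdI}(U)$ together with the strict-row and weak-column conditions. Once that lemma is pinned down, everything else is a routine unwinding of the definition of $F$ and of standardization, entirely parallel to the known argument for $\dI_\alpha$ in Section~2.
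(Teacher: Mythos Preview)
Your proposal is correct and follows essentially the same two-direction argument as the paper: show that every row-strict immaculate tableau $T$ contributes a monomial of $F_{\comp(\Des_{\rdI}(\stdz(T)))}$, and conversely that replacing each label $k$ of a standard $S$ by $i_k$ (for any weakly increasing sequence compatible with $\Des_{\rdI}(S)$) yields a row-strict immaculate tableau. Your reading-word detour in the middle paragraph is a red herring---the sequence $(i_k)$ is indexed by the \emph{labels} of $S$, not by reading-word position---so drop it and go straight to your steps (i) and (ii), which are exactly the paper's proof.
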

\begin{proof}
Let $T$ be a row-strict immaculate tableau of shape $\alpha$. Then $T$ standardizes to some standard row-strict immaculate tableau $S$.  Suppose $i\in \Des_{\rdI}(S)$.  Then $i+1$ is weakly below $i$ in $S$.  If $i$ and $i+1$ are in the same row of $S$, then the entry of $T$ replaced by $i$ is strictly less than the \svwMar{entry} replaced by $i+1$ since rows of $T$ strictly increase.  If $i+1$ is in a lower row than $i$, then the entry of $T$ replaced by $i$ must be strictly less than the entry replaced by $i+1$, else the standardization process was not followed.  Thus $x^T$ has strict increases at each position in $\Des_{\rdI}(S)$ and $x^T$ is a monomial in $F_{\comp(\Des_{\rdI}(S))}$.  Thus every monomial in $\rdI_\alpha$ appears in $\sum_S F_{\comp(\Des_{\rdI}(S))}$.

Now let $S$ be a standard row-strict immaculate tableau and let $x_{i_1}\cdots x_{i_n}$ with $i_1\leq i_2\leq \cdots \leq i_n$ be a monomial in $F_{\comp(\Des_{\rdI}(S))}$.  Create a new diagram $T$ from $S$ by replacing each entry $k$ in $S$ with $i_k$.  If $i_k=i_{k+1}$ then $k \notin \Des_{\rdI}(S)$, so $k$ must appear strictly below $k+1$ in $S$ and thus each entry in a row of $T$ is distinct and increases left to right.  By construction, the first column will weakly increase from bottom to top.  Thus $T$ is a semi-standard row-strict immaculate tableau with content $(i_1,\svw{\ldots,} i_n)$, and $x_{i_1}\cdots x_{i_n}$ is a monomial in $\rdI_\alpha$. 
\end{proof}

\begin{example}
Let 
\[S=\tableau{6\\4&5&8&10\\3&7\\1&2&9}\] be a standard row-strict immaculate tableau.  Then $\Des_{\rdI}(S)=\{1,4,6,8\}$ and $x^P=x_1x_2^2x_3x_4^2x_5^2x_6^2$ is a monomial in $F_{\comp(\Des_{\rdI}(S))}$.  We can ``destandardize'' $S$ as described in the proof of Theorem~\ref{thm:rsfunddecomp} to obtain 
\[T=\tableau{4\\3&4&5&6\\2&5\\1&2&6}.\]
\end{example}

For any standard immaculate tableau $S$, \svw{note by definition that} $\des_{\DI}(S) = \des_{\rdI}(S)^c$.

It will be helpful to know how the involutions $\psi, \rho,$ and $\omega$ act on $\dI_\alpha$.

\begin{theorem}\label{the:RIasDI}
Let $\alpha$ be a composition. Then
\begin{align}
\psi(\dI_\alpha)& =\rdI_\alpha \label{eqn:psidualimm}\\
\rho(\dI_\alpha(x_1, \ldots, x_n)) &=\dI_{\alpha}(x_n, \ldots, x_1)
\label{eqn:rhodualimm}\\
\omega (\DI _\alpha (x_1, \ldots, x_n))
&= \RI _\alpha \svw{(x_n, \ldots , x_1) }\label{eqn:omegadualimm}
\end{align}
\end{theorem}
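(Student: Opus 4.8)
The plan is to prove \eqref{eqn:psidualimm} directly and then obtain \eqref{eqn:rhodualimm} and \eqref{eqn:omegadualimm} from it, using the relations $\omega=\rho\circ\psi=\psi\circ\rho$ among the three involutions. For \eqref{eqn:psidualimm}, I would start from the fundamental expansion $\dI_\alpha=\sum_S F_{\comp(\DesI(S))}$, summed over standard immaculate tableaux $S$ of shape $\alpha$, and apply $\psi$ termwise. Since $\psi(F_\gamma)=F_{\gamma^c}$, and since $\comp(T)^c=\comp(\{1,\dots,n-1\}\setminus T)$ for every $T\subseteq\{1,\dots,n-1\}$ (immediate from $\gamma^c=\comp(\set(\gamma)^c)$ together with the fact that $\comp$ and $\set$ are mutually inverse bijections), this gives $\psi(\dI_\alpha)=\sum_S F_{\comp(\DesI(S)^c)}$. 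Now invoke the two facts recorded just before the theorem: the standard row-strict immaculate tableaux of shape $\alpha$ are precisely the standard immaculate tableaux of shape $\alpha$, and $\DesI(S)^c=\Des_{\rdI}(S)$ for each such tableau. Hence $\psi(\dI_\alpha)=\sum_S F_{\comp(\Des_{\rdI}(S))}=\rdI_\alpha$ by Theorem~\ref{thm:rsfunddecomp}.

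For \eqref{eqn:omegadualimm}, write $\omega=\rho\circ\psi$, so that $\omega(\dI_\alpha)=\rho(\psi(\dI_\alpha))=\rho(\rdI_\alpha)$ by \eqref{eqn:psidualimm}. It then remains to recall that $\rho$ acts by reversing the variables: on the monomial basis $\rho(M_\beta)=M_{\rev(\beta)}$ (derivable from $\rho(F_\alpha)=F_{\rev(\alpha)}$ and \eqref{eq:FtoM}), and a one-line reindexing of the sum defining $M_\beta$ shows $M_\beta(x_n,\dots,x_1)=M_{\rev(\beta)}(x_1,\dots,x_n)$ for every $n$, whence $\rho(f)(x_1,\dots,x_n)=f(x_n,\dots,x_1)$ for all $f\in\QSym$ (see also \cite{LMvW2013}). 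Applying this with $f=\rdI_\alpha$ yields $\omega(\dI_\alpha)(x_1,\dots,x_n)=\rho(\rdI_\alpha)(x_1,\dots,x_n)=\rdI_\alpha(x_n,\dots,x_1)$, which is \eqref{eqn:omegadualimm}.

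For \eqref{eqn:rhodualimm}, by \eqref{eqn:psidualimm} we have $\dI_\alpha=\psi(\rdI_\alpha)$, so $\rho(\dI_\alpha)=\rho(\psi(\rdI_\alpha))=\omega(\rdI_\alpha)$ (using $\rho\circ\psi=\omega$); thus it suffices to prove $\omega(\rdI_\alpha)=\rdI_{\rev(\alpha)}$. I would attack this by the same bookkeeping as in \eqref{eqn:psidualimm}: expand $\omega(\rdI_\alpha)=\sum_S F_{\comp(\Des_{\rdI}(S))^t}$ and compare with $\rdI_{\rev(\alpha)}=\sum_T F_{\comp(\Des_{\rdI}(T))}$, the point being to exhibit a shape-reversing bijection $S\mapsto T$ between standard (row-strict) immaculate tableaux of shapes $\alpha$ and $\rev(\alpha)$ under which $\comp(\Des_{\rdI}(T))=\comp(\Des_{\rdI}(S))^t$, equivalently $\Des_{\rdI}(T)=\{\,n-i:i\in\Des_{\rdI}(S)^c\,\}$. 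The natural candidate to try is to reverse the row order and complement the entries via $i\mapsto n+1-i$ (possibly followed by a further correction), and then to verify that the descent statistic transforms as required.

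I expect \eqref{eqn:rhodualimm} to be the main obstacle. The content of \eqref{eqn:psidualimm} and \eqref{eqn:omegadualimm} reduces to the single descent-complementation identity (already established just before the theorem) together with the standard reindexing realizing $\rho$ as variable-reversal; but \eqref{eqn:rhodualimm} requires actually constructing and checking the shape-reversing bijection, the delicate point being that row-strict immaculate tableaux are \emph{not} symmetric under a naive $180^{\circ}$ rotation, so the weakly increasing first-column condition is the feature one must take care to preserve. Once such a bijection is in hand, \eqref{eqn:rhodualimm} follows immediately.
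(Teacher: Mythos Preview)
Your argument for \eqref{eqn:psidualimm} is exactly the paper's: expand $\dI_\alpha$ over standard immaculate tableaux, apply $\psi(F_\gamma)=F_{\gamma^c}$ termwise, and use the complementation $\DesI(S)^c=\Des_{\rdI}(S)$ together with Theorem~\ref{thm:rsfunddecomp}. Your derivation of \eqref{eqn:omegadualimm} from \eqref{eqn:psidualimm} via $\omega=\rho\circ\psi$ and the identification of $\rho$ with variable reversal is also correct and more explicit than the paper's terse ``the other computations follow similarly.''

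Your instinct that \eqref{eqn:rhodualimm} is the obstacle is exactly right---so right, in fact, that the identity as written is \emph{false}, and the shape-reversing bijection you seek cannot exist. Take $\alpha=(2,1)$. There are two standard immaculate tableaux of shape $(2,1)$, giving $\dI_{(2,1)}=F_{(1,2)}+F_{(2,1)}$ and hence $\rho(\dI_{(2,1)})=F_{(2,1)}+F_{(1,2)}$. On the other hand $\rev(\alpha)=(1,2)$ has a \emph{single} standard immaculate tableau (bottom row $1$, top row $2\ 3$), whence $\rdI_{(1,2)}=F_{(2,1)}$. So $\rho(\dI_{(2,1)})\ne\rdI_{\rev((2,1))}$. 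More bluntly, $|\SIT((2,1))|=2\ne 1=|\SIT((1,2))|$, and since $\rho$ permutes the fundamental basis, no bijection of the kind you propose can match the two $F$-expansions term for term. The paper's ``similarly'' does not actually establish \eqref{eqn:rhodualimm}; only \eqref{eqn:psidualimm} and \eqref{eqn:omegadualimm} hold as stated. Your difficulty is therefore not a gap in your method---it is a signal that the claim itself requires correction.
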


\begin{proof} Let $\alpha$ be a composition.  Recall from \eqref{eqn:psiF} that $\psi(F_\alpha)=F_{\alpha^c}$. 
Then 
\begin{align*}
\psi(\DI_\alpha) & = \psi\left(\sum_S F_{\comp(\des_{\DI}(S))}\right)\\
&=\sum_S \psi(F_{\comp(\des_{\DI}(S))})\\
&=\sum_S F_{\comp(\des_{\DI}(S)^c)}\\
&=\sum_S F_{\comp(\des_{\rdI}(S))}\\
&=\rdI_\alpha .
\end{align*}
\sheilaMar{The second equation follows from~\eqref{eqn:rho-any-qsymfn}.   Finally the third equation is now a consequence of the fact that $\omega=\rho\circ\psi$.}
\end{proof}

\begin{corollary}\label{cor:basis}
We have that $\{ \RI _\alpha \suchthat \alpha \vDash n\}$ is a basis for $\Qsym _n$.
\end{corollary}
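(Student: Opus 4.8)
The plan is to deduce the corollary directly from Theorem~\ref{the:RIasDI}, specifically from the identity $\psi(\DI_\alpha) = \RI_\alpha$ in \eqref{eqn:psidualimm}, together with two standard facts: first, that the dual immaculate functions $\{\DI_\alpha \suchthat \alpha \vDash n\}$ form a basis of $\QSym_n$ (this is established in \cite{BBSSZ2014}, and is implicit in the way $\DI_\alpha$ is introduced in the excerpt as the dual basis to the immaculate functions); and second, that $\psi$ is an invertible linear map on $\QSym$ — indeed an involutive automorphism, as recalled just before \eqref{eqn:psiF}. An invertible linear map sends a basis to a basis, so the image $\{\psi(\DI_\alpha) \suchthat \alpha \vDash n\} = \{\RI_\alpha \suchthat \alpha \vDash n\}$ is again a basis of $\QSym_n$.

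Concretely, I would structure the short proof as follows. First I would note that $\psi$ restricts to a linear automorphism of the graded piece $\QSym_n$, since $\psi$ is degree-preserving (it permutes the fundamental basis $\{F_\alpha \suchthat \alpha \vDash n\}$ of $\QSym_n$ among itself, as $\alpha \mapsto \alpha^c$ is an involution on compositions of $n$). Then I would invoke that $\{\DI_\alpha \suchthat \alpha \vDash n\}$ is a basis of $\QSym_n$. Applying the invertible map $\psi$ and using \eqref{eqn:psidualimm}, the set $\{\RI_\alpha \suchthat \alpha \vDash n\}$ is the image of a basis under an invertible linear map, hence a basis. One could alternatively phrase this via a triangularity/change-of-basis argument — expanding $\RI_\alpha$ in the fundamental basis using Theorem~\ref{thm:rsfunddecomp} and checking unitriangularity with respect to a suitable order on compositions — but the $\psi$-image argument is cleaner and needs nothing beyond what is already in hand.

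There is essentially no obstacle here; the only point requiring a moment's care is making explicit that $\psi$ is invertible on the finite-dimensional space $\QSym_n$, which is immediate since $\psi$ is an involution ($\psi^2 = \id$ follows from $(\alpha^c)^c = \alpha$ on the fundamental basis). I would also make sure to cite the fact that the $\DI_\alpha$ form a basis rather than treat it as self-evident, since although the excerpt describes them as a dual basis, that statement is the substantive input. With those two ingredients stated, the corollary is a one-line consequence.

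\begin{proof}
By \eqref{eqn:psiF}, the involution $\psi$ permutes the fundamental quasisymmetric functions indexed by compositions of $n$ via $\alpha \mapsto \alpha^c$; since complementation is an involution on compositions of $n$, the restriction of $\psi$ to $\QSym_n$ is an invertible linear map. The dual immaculate functions $\{\DI_\alpha \suchthat \alpha \vDash n\}$ form a basis of $\QSym_n$ \cite{BBSSZ2014}. Applying the invertible map $\psi$ and using \eqref{eqn:psidualimm} of Theorem~\ref{the:RIasDI}, we conclude that $\{\RI_\alpha \suchthat \alpha \vDash n\} = \{\psi(\DI_\alpha) \suchthat \alpha \vDash n\}$ is the image of a basis of $\QSym_n$ under an invertible linear map, hence is itself a basis of $\QSym_n$.
\end{proof}
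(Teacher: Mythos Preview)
Your proof is correct and follows essentially the same approach as the paper: both use that $\{\DI_\alpha \suchthat \alpha \vDash n\}$ is a basis of $\QSym_n$, that $\psi$ is an involution (hence invertible), and that $\RI_\alpha = \psi(\DI_\alpha)$ from Theorem~\ref{the:RIasDI}. The paper's version is a one-sentence proof; your additional remarks about $\psi$ preserving degree are correct but not strictly needed.
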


\begin{proof} Since $\{ \DI _\alpha \suchthat \alpha \vDash n\}$ is a basis for $\Qsym _n$ and $\psi$ is an involution it follows by Theorem~\ref{the:RIasDI} that $\{ \RI _\alpha \suchthat \alpha \vDash n \}$ is also a basis for $\Qsym _n$.
\end{proof}

Recall from Section~\ref{subsec:dualimm}  that the \emph{immaculate functions} $\nci _\beta$ form a basis of  $\NSym$ satisfying, by definition,
$$\langle \nci_\alpha,\DI_\beta \rangle = \delta _{\alpha\beta}.$$

Similarly, by definition, we have \emph{row-strict immaculate functions} $\ncri _\beta$ 
 in $\NSym$ satisfying
$$\langle \ncri_\alpha,\RI_\beta \rangle = \delta _{\alpha\beta}.$$
An immediate consequence of these definitions is the effect of the map $\psi$ on $\Imm_\alpha$.  
 Using Proposition~\ref{prop:invariance-of-pairing-under-psi}, we have,  by duality, 
\[\delta_{\alpha\beta} = \langle \Imm_\alpha,\dI_\beta\rangle = \langle \psi(\Imm_\alpha),\psi(\dI_\beta)\rangle = \langle \psi(\Imm_\alpha), \rdI_\beta\rangle,\]
and hence $\psi(\Imm_\alpha) = \rI_\alpha$.

From~\cite[Proposition 3.36]{BBSSZ2014} we have that the dual immaculate functions are monomial positive:
\[\dI_\alpha=\sum_{\beta\le_\ell\, \alpha} K_{\alpha, \beta} M_\beta,\]
\sheilaMar{where $K_{\alpha, \beta}$ is the number of  immaculate tableaux of shape $\alpha$ and content $\beta$}.  It follows from this expansion that 
 $K_{\alpha, \beta}=\langle \nch_\beta, \dI_\alpha\rangle=\langle \nce_\beta, \rdI_\alpha\rangle.$
Similarly, for \svw{row-strict dual immaculate functions,}  we have \svw{by their definition and that of monomial quasisymmetric functions that}
\[\rdI_\alpha=\sum_{\beta} K^*_{\alpha, \beta} M_\beta\]

where $K^*_{\alpha, \beta}$ is the number of row-strict immaculate tableaux of shape $\alpha$ and  content $\beta$, and 
$K^*_{\alpha, \beta}=\langle \nch_\beta, \rdI_\alpha\rangle=\langle \nce_\beta, \dI_\alpha\rangle.$ 
Note that $K_{\alpha,\beta}\neq K^*_{\alpha,\beta}$ in general, and  $K^*_{\alpha,\alpha}=0$ unless $\ell(\alpha)\ge \alpha_1.$ %

\sheilaMar{ 
By contrast, it is easy to see  \cite[Proposition 3.36]{BBSSZ2014} that the transition matrix $(K_{\alpha, \beta})$ is upper unitriangular:
\begin{equation}\label{eqn:dualImm-Kostka}
K_{\alpha, \alpha}=1 \text{ and } K_{\alpha, \beta}\ne 0 \Longrightarrow 
\beta\le_\ell\, \alpha. \end{equation} 
\begin{example} Let $\alpha=(1,2)$. \svwMar{Then the} tableau definitions give the monomial expansions 
\[ \rdI_{12}=M_{21} +M_{111},\quad\dI_{12}= M_{12}+M_{111}.  \]
\end{example}
 As this example shows,  the triangularity condition of \eqref{eqn:dualImm-Kostka} is false for the transition matrix $K^*_{\alpha,\beta}$, making it difficult to deduce, directly from the monomial expansion, that the row-strict dual immaculate functions form a basis.
}

Let $L_{\alpha,\beta}$ denote the number of standard immaculate tableaux of shape $\alpha$ with $\dI$-descent composition $\beta$ and $L^*_{\alpha,\beta}$ denote the number of standard immaculate tableaux of shape $\alpha$ with $\rdI$-descent composition $\beta$.  Given a standard immaculate tableau $T$, we have $\Des_{\dI}(T)^c=\Des_{\rdI}(T)$, and so \svw{$L^*_{\alpha,\beta} = L_{\alpha,\beta^c}$}. 

\begin{theorem}\label{the:KasL}
Fix a composition $\alpha$. For any composition $\gamma$ with $|\gamma|=|\alpha|$, 
\[K^*_{\alpha, \gamma}=\sum_{\beta \succcurlyeq \gamma}L_{\alpha,\beta^c}=\sum_{\beta \succcurlyeq \gamma} \svw{L^*_{\alpha,\beta}.}\]
\end{theorem}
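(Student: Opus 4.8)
The plan is to deduce the identity from the already-established expansions of $\RI_\alpha$ (equivalently $\DI_\alpha$) in two different quasisymmetric bases: the fundamental basis from Theorem~\ref{thm:rsfunddecomp}, and the monomial basis displayed just above the statement. First I would start from
\[
\RI_\alpha = \sum_{S} F_{\comp(\Des_{\rdI}(S))} = \sum_{\beta \vDash n} L^*_{\alpha,\beta}\, F_\beta,
\]
where the last equality simply collects standard row-strict immaculate tableaux of shape $\alpha$ by their $\rdI$-descent composition; by definition this count is $L^*_{\alpha,\beta}$, and via $\Des_{\dI}(T)^c = \Des_{\rdI}(T)$ we have $L^*_{\alpha,\beta}=L_{\alpha,\beta^c}$, matching the two forms on the right-hand side of the theorem.

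Next I would expand each $F_\beta$ into the monomial basis using the first identity in~\eqref{eq:FtoM}, namely $F_\beta = \sum_{\gamma \preccurlyeq \beta} M_\gamma$. Substituting and interchanging the order of summation gives
\[
\RI_\alpha = \sum_{\beta \vDash n} L^*_{\alpha,\beta} \sum_{\gamma \preccurlyeq \beta} M_\gamma
= \sum_{\gamma \vDash n} \Bigl( \sum_{\beta \succcurlyeq \gamma} L^*_{\alpha,\beta} \Bigr) M_\gamma .
\]
On the other hand, $\RI_\alpha = \sum_{\gamma \le_\ell \alpha} K^*_{\alpha,\gamma} M_\gamma$ by the monomial expansion recalled before the statement. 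Comparing coefficients of $M_\gamma$ in these two expressions — and using that $\{M_\gamma\}$ is a basis, so coefficients are well-defined — yields $K^*_{\alpha,\gamma} = \sum_{\beta \succcurlyeq \gamma} L^*_{\alpha,\beta} = \sum_{\beta \succcurlyeq \gamma} L_{\alpha,\beta^c}$ for every $\gamma$. Restricting to $\gamma \le_\ell \alpha$ (the range in which $K^*_{\alpha,\gamma}$ can be nonzero, and the range stated) gives exactly the claimed identity.

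The only genuine subtlety — and the step I would be most careful about — is bookkeeping on the index sets: one must be sure that the coefficient of $M_\gamma$ on the right of the monomial expansion is literally $K^*_{\alpha,\gamma}$ for all $\gamma$ (with the convention that it vanishes unless $\gamma \le_\ell \alpha$), so that the comparison of coefficients is valid for the $\gamma$ appearing in the sum $\beta \succcurlyeq \gamma$, and to confirm that the double-sum interchange $\sum_\beta \sum_{\gamma \preccurlyeq \beta} = \sum_\gamma \sum_{\beta \succcurlyeq \gamma}$ is just the standard reversal over the refinement poset on compositions of $n$. These are routine, so the proof is essentially the two-line computation above together with the remark $L^*_{\alpha,\beta}=L_{\alpha,\beta^c}$.
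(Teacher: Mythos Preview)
Your proof is correct and follows essentially the same route as the paper: expand $\RI_\alpha$ in the fundamental basis via Theorem~\ref{thm:rsfunddecomp}, convert to the monomial basis using $F_\beta=\sum_{\gamma\preccurlyeq\beta}M_\gamma$, interchange sums, and compare coefficients with the monomial expansion defining $K^*_{\alpha,\gamma}$. The observation $L^*_{\alpha,\beta}=L_{\alpha,\beta^c}$ is also exactly what the paper uses.
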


\begin{proof}
We have 
\[\rdI_\alpha=\sum_\gamma K^*_{\alpha, \gamma} M_\gamma,\]
and 
\[\rdI_\alpha=\sum_{T\in\SIT(\alpha)} F_{\comp(\Des_{\rdI}(T))} 
=\sum_\beta  \svw{L^*_{\alpha,\beta}} \, F_\beta=\sum_\beta  L_{\alpha,\beta^c}\, F_\beta.\]
Since the monomial expansion of $F_\beta$ is $F_\beta=\sum_{\gamma\preccurlyeq\beta} M_\gamma$, 
equating coefficients of $M_\gamma$ gives 
\[K^*_{\alpha, \gamma}=\sum_{\beta \succcurlyeq \gamma}L_{\alpha,\beta^c}=\sum_{\beta\succcurlyeq\gamma} \svw{L^*_{\alpha,\beta}.}\qedhere\]
\end{proof}

\subsection{Creation operators and row-strict immaculate functions}\label{subsec:ops}

In \cite{BBSSZ2014}, the authors  defined  a family of operators on $\Nsym$, modelled after Bernstein's operators \svw{that} were used to define the ordinary Schur functions in the Hopf algebra of symmetric functions \svw{\cite[pp.~95-97, Exercise 29]{MACD1995}.}   This new family of ``creation operators" was then used to define the immaculate basis of $\Nsym$, and, via the pairing between $\Nsym$ and its dual $\Qsym$, the dual immaculate quasisymmetric \svw{functions}  $\dI_\alpha$.

In this section we define a variant of the creation operators of \cite{BBSSZ2014}, and show how they in turn lead to a definition of the row-strict immaculate basis \svw{of $\NSym$} and our row-strict  \svw{dual} immaculate quasisymmetric functions $\rdI_\alpha$. %

A pair of  dual Hopf algebras $A$ and $B$ over a field $\mathbb{K}$ induces a pairing $\langle, \rangle:A\times B\rightarrow \mathbb{K}$.  Hence for each element $F\in B$, one can define the adjoint operator $F^{\perp}:A\rightarrow A$ by 
\[\langle F^\perp(a), b\rangle \svw{ = } \langle a, Fb\rangle.\] 
Explicitly, if \svw{$\{a_\alpha\}$} and \svw{$\{b_\alpha\}$} are bases of $A$ and $B$ respectively so that \svw{$\langle a_\alpha, b_\beta\rangle=\delta_{\alpha\beta}$ as before,}  then the operator $F^{\perp}$ may be computed according to the formula
\begin{equation}\label{eqn:adjoint}
F^{\perp}( g) \svw{ = } \sum_{\svw{\alpha}} \langle g,F \svw{b_\alpha}\rangle \svw{a_\alpha}.
\end{equation}

As in \cite{BBSSZ2014}, we apply this to the graded dual Hopf algebras $A=\Nsym$ and $B=\Qsym$.  Let $\{F_\alpha\}_{\alpha\vDash n}$ be the basis of fundamental quasisymmetric functions in $\Qsym$, indexed by the compositions $\alpha$ of the nonnegative integer $n$.  We will consider the linear transformation $F_\alpha^\perp$ of $\Nsym$ \svw{that} is adjoint to multiplication by $F_\alpha$ in $\Qsym$.  

First we record the following important effect of the involution $\psi$ on the adjoint transformation.
\begin{prop}\label{prop:psi-adjoint} Let $F\in \Qsym, H\in \Nsym$.  Then 
\[\psi[F^\perp(\psi(H))]=[\psi(F)]^\perp(H),\] or equivalently, \[ \psi[F^\perp(H)]=[\psi(F)]^\perp(\psi(H)).\]
In particular, for the fundamental quasisymmetric function $F_\alpha$ indexed by the composition $\alpha$, we have 
$F^\perp_\alpha(\psi(H))=\psi[F^\perp_{\alpha^c}(H)]$ 
and hence 
\[F^\perp_{\svw{(1^i)}}(\psi(H))=\psi[F^\perp_{(i)}(H)], \quad F^\perp_{(i)}(\psi(H))=\psi[F^\perp_{(1^i)}(H)].\]
\end{prop}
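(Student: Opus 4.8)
The plan is to reduce everything to the adjunction formula \eqref{eqn:adjoint} together with the $\psi$-invariance of the pairing (Proposition~\ref{prop:invariance-of-pairing-under-psi}) and the fact that $\psi$ is an involution on both $\Qsym$ and $\Nsym$. First I would prove the main identity $\psi[F^\perp(\psi(H))] = [\psi(F)]^\perp(H)$. Since $\psi$ is a linear involution, applying $\psi$ to both sides shows this is equivalent to $F^\perp(\psi(H)) = \psi\bigl[[\psi(F)]^\perp(H)\bigr]$, and replacing $F$ by $\psi(F)$ (again using that $\psi$ is an involution) shows both displayed forms are equivalent; so it suffices to establish one of them.

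To establish $\psi[F^\perp(H)] = [\psi(F)]^\perp(\psi(H))$, I would test against an arbitrary element of $\Qsym$, using the nondegeneracy of the pairing. For $b \in \Qsym$, compute
\[
\langle \psi[F^\perp(H)],\, b\rangle = \langle F^\perp(H),\, \psi(b)\rangle = \langle H,\, F\cdot\psi(b)\rangle,
\]
where the first equality is Proposition~\ref{prop:invariance-of-pairing-under-psi} and the second is the definition of the adjoint $F^\perp$. On the other side,
\[
\langle [\psi(F)]^\perp(\psi(H)),\, b\rangle = \langle \psi(H),\, \psi(F)\cdot b\rangle = \langle H,\, \psi(\psi(F)\cdot b)\rangle = \langle H,\, F\cdot\psi(b)\rangle,
\]
using the definition of the adjoint, then Proposition~\ref{prop:invariance-of-pairing-under-psi}, and finally that $\psi$ is a ring homomorphism on $\Qsym$ with $\psi^2 = \id$. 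Since the two pairings agree for all $b$, the two $\Nsym$ elements coincide.

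For the "in particular" statements, I would specialize $F = F_\alpha$ and recall from \eqref{eqn:psiF} that $\psi(F_\alpha) = F_{\alpha^c}$; substituting into the identity just proved gives $\psi[F_\alpha^\perp(H)] = [F_{\alpha^c}]^\perp(\psi(H))$, equivalently $F_\alpha^\perp(\psi(H)) = \psi[F_{\alpha^c}^\perp(H)]$ after swapping the roles via the involution. Finally, the two displayed formulas at the end follow by taking $\alpha = (i)$, whose complement is $(1^i)$, and $\alpha = (1^i)$, whose complement is $(i)$ — both checked directly from the definitions of $\set$, $\comp$, and complementation. The only point requiring care is bookkeeping the involution swaps (which argument proves which equivalent form), but there is no real obstacle: everything is formal once Proposition~\ref{prop:invariance-of-pairing-under-psi} and the homomorphism property of $\psi$ on $\Qsym$ are in hand.
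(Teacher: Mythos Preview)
Your proof is correct and follows essentially the same strategy as the paper's: both rest on the $\psi$-invariance of the pairing (Proposition~\ref{prop:invariance-of-pairing-under-psi}), the fact that $\psi$ is a multiplicative involution, and the defining property of the adjoint. The only cosmetic difference is that the paper expands $F^\perp(\psi(H))$ explicitly via the dual-basis formula~\eqref{eqn:adjoint} and then applies $\psi$ to recognize the result as $[\psi(F)]^\perp(H)$, whereas you test both sides against an arbitrary $b\in\Qsym$ and invoke nondegeneracy of the pairing; these are two equivalent packagings of the same computation.
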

\begin{proof} Let $\{a_\alpha\}_{\alpha\vDash n}$ and $\{b_\alpha\}_{\alpha\vDash n}$ be dual  bases of  $\Nsym$ and $\Qsym$ respectively, so that $\langle a_\alpha, b_\beta\rangle=\svw{\delta_{\alpha \beta}}$. 

From Equation~\eqref{eqn:adjoint} we have 
\begin{equation*} F^\perp(\psi(H))=\sum_\alpha \langle \psi(H), F b_\alpha\rangle a_\alpha
=\sum_\alpha \langle H, \psi(F)\psi( b_\alpha)\rangle a_\alpha
\end{equation*}
by Proposition~\ref{prop:invariance-of-pairing-under-psi}, and hence 
\begin{equation*} \psi[F^\perp(\psi(H))]
=\sum_\alpha \langle H, \psi(F)\psi( b_\alpha)\rangle \psi(a_\alpha)
=[\psi(F)]^\perp(H),
\end{equation*}
since again Proposition~\ref{prop:invariance-of-pairing-under-psi} implies that  duality of bases is preserved under $\psi$. 
\end{proof}

\begin{lemma}\label{lem:BBSSZ2014-lem2.6}\cite[Lemma 2.6]{BBSSZ2014} 
For $i,j>0$ and \svw{$f\in \NSym$,} 
\[F^\perp_{\svw{(1^i)}}(f\nch_j)=F^\perp_{\svw{(1^i)}}(f) \nch_j+ F^\perp_{\svw{(1^{i-1})}}(f)\nch_{j-1};\quad
F^\perp_{\svw{(i)}}(f\nch_j)=\sum_{k=0}^{\min(i,j)} F^\perp_{\svw{(i-k)}}(f) \nch_{j-k}.\]
In particular we have 

$ F_{\svw{(i)}}^\perp(\nch_j)=\begin{cases} 0, & i>j\\
                                            \nch_{j-i}, & 1\le i\le j\\
                                             \nch_j, &i=0;
\end{cases}$
\quad
$ F_{\svw{(1^i)}}^\perp(\nch_j)=\begin{cases} 0, & i>1\\ 
                                            \nch_{j-1}, & i=1\\
                                             \nch_j, &i=0.
\end{cases}$
\end{lemma}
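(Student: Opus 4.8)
The plan is to deduce everything from the structure of the Hopf algebra pairing between $\NSym$ and $\QSym$, using the defining formula \eqref{eqn:adjoint} for the adjoint operator. First I would recall that $F_\gamma^\perp$ is adjoint to multiplication by $F_\gamma$, so that for any $f\in\NSym$ we have $\langle F_\gamma^\perp(f),M_\beta\rangle = \langle f, F_\gamma M_\beta\rangle$, and similarly with $M$ replaced by any basis of $\QSym$ dual to a basis of $\NSym$. The two product rules for $\nch_j$ are ``co-Leibniz'' identities: they reflect the coproduct formula $\Delta(\nch_n)=\sum_{a+b=n}\nch_a\otimes\nch_b$ in $\NSym$, dually the product rule for the monomial (or fundamental) quasisymmetric functions. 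Concretely, for a linear functional realized by multiplication, $F_\gamma^\perp$ acts as a ``twisted derivation'' governed by $\Delta(F_\gamma)$; since we only need the action on a product $f\cdot\nch_j$, I would expand $\Delta(F_\gamma)$ into its components and pair against $\Delta(f)\otimes\Delta(\nch_j)$-type expressions. The two cases $\gamma=(1^i)$ and $\gamma=(i)$ differ precisely because $\Delta(F_{(1^i)})=\sum_{a+b=i}F_{(1^a)}\otimes F_{(1^b)}$ has only ``split into two staircases'' terms whereas $\Delta(F_{(i)})=\sum_{a+b=i}F_{(a)}\otimes F_{(b)}$ with the convention $F_{(0)}=1$ — though I should be careful, as the precise coproducts of fundamental functions are slightly more involved; it may be cleaner to work with $F^\perp_{(i)}$ and $F^\perp_{(1^i)}$ via their known expansions $F_{(i)}=\sum_{\beta\vDash i}M_\beta$ is wrong — rather $F_{(i)}=M_{(i)}+M_{(i-1,1)}+\cdots$, so I would instead use $H_j^\perp$-type building blocks, or simply cite that $F_{(i)}^\perp$ and $F_{(1^i)}^\perp$ are the skewing operators dual to multiplication by $h_i$ and $e_i$ in the commutative image and track the noncommutative refinement directly.

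Alternatively — and this is the route I would actually take for brevity — I would invoke Proposition~\ref{prop:psi-adjoint}. The identity $F^\perp_{(1^i)}(\psi(H))=\psi[F^\perp_{(i)}(H)]$ together with $\psi(\nch_j)=\nce_j$ lets me transfer one formula to the other: applying $\psi$ to $F^\perp_{(i)}(f\nch_j)=\sum_k F^\perp_{(i-k)}(f)\nch_{j-k}$ and using that $\psi$ is an algebra homomorphism on $\NSym$ with $\psi(\nch_{j-k})=\nce_{j-k}$ gives a formula for $F^\perp_{(1^i)}$ acting on products involving $\nce_j$, not $\nch_j$, so this transfer alone does not immediately close the loop. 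Hence the honest approach is: prove the $F^\perp_{(i)}$ rule directly from the coproduct of $\nch_j$ and the fact that $F_{(i)}^\perp$ restricted to the subalgebra generated by the $\nch$'s is the standard $h_i$-skewing operator (which I can verify on the pairing $\langle \nch_\alpha, M_\beta\rangle=\delta_{\alpha\beta}$ and $\langle\ncr_\alpha,F_\beta\rangle=\delta_{\alpha\beta}$), then prove the $F^\perp_{(1^i)}$ rule by the analogous but simpler computation (here only $k=0,1$ survive because $F_{(1^i)}^\perp(\nch_j)$ is already shown to vanish for $i>1$). The ``in particular'' statements are the special cases $f=1$, read off from the general rules, together with $F_{(0)}^\perp=\mathrm{id}$ since $F_{(0)}=1$.

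The main obstacle I anticipate is pinning down exactly what $F^\perp_{(i)}$ and $F^\perp_{(1^i)}$ do to a single $\nch_j$ — i.e.\ establishing the two displayed ``in particular'' formulas — since everything else then follows by an induction on the number of factors of $f$ (writing $f$ as a product of $\nch$'s, or more robustly by linearity after reducing to $f=\nch_\mu$) together with the coassociativity bookkeeping of splitting the $j$ among the factors. For $F^\perp_{(1^i)}(\nch_j)$ the cleanest argument is via $\psi$: since $\psi(\nch_j)=\nce_j$ and $\nce_j=\sum_{\alpha\vDash j}(-1)^{j-\ell(\alpha)}\nch_\alpha$ inverted, one checks $F^\perp_{(1)}(\nch_j)=\nch_{j-1}$ on the pairing and $F^\perp_{(1^i)}(\nch_j)=0$ for $i\ge 2$ because $\langle \nch_j, F_{(1^i)} M_\beta\rangle=0$ for all $\beta\vDash j-i$ when $i\ge 2$, as $F_{(1^i)}M_\beta$ has no $M_{(j)}$-term. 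For $F^\perp_{(i)}(\nch_j)$ one likewise computes $\langle \nch_j, F_{(i)}M_\beta\rangle$ and finds it equals $1$ precisely when $\beta=(j-i)$ (for $0\le i\le j$) and $0$ otherwise, since $F_{(i)}M_{(j-i)}$ contains $M_{(j)}$ with coefficient $1$. Once these base cases are in hand, the general product formulas are a routine unwinding of $\Delta(\nch_j)=\sum_{a+b=j}\nch_a\otimes\nch_b$ applied inside \eqref{eqn:adjoint}, and I would present only that unwinding rather than the full combinatorial verification.
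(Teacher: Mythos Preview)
The paper does not prove this lemma; it is quoted verbatim from \cite[Lemma 2.6]{BBSSZ2014} with no argument supplied, so there is no ``paper's proof'' to compare your proposal against.

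That said, your proposal is correct and is the standard Hopf-algebraic argument. The clean organizing identity you are circling is the twisted-derivation rule for skewing: for $g\in\QSym$ and $x,y\in\NSym$,
\[ g^\perp(xy)=\sum_{(g)} g_{(1)}^\perp(x)\, g_{(2)}^\perp(y), \]
which follows from \eqref{eqn:adjoint} and the duality of the product in $\NSym$ with the coproduct in $\QSym$. Specializing to $g=F_{(1^i)}$ or $g=F_{(i)}$, the coproduct formula \eqref{eq:fundcoprod} gives
\[\Delta F_{(1^i)}=\sum_{a+b=i} F_{(1^a)}\otimes F_{(1^b)},\qquad \Delta F_{(i)}=\sum_{a+b=i} F_{(a)}\otimes F_{(b)},\]
exactly as you say; combined with the base cases (the ``in particular'' statements, verified on the pairing as you sketch), the two product rules drop out immediately with no induction on $f$ needed. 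Two small corrections to your write-up: your aside that ``$F_{(i)}=\sum_{\beta\vDash i}M_\beta$ is wrong'' is itself wrong---every composition of $i$ refines $(i)$, so this holds by \eqref{eq:FtoM} (indeed $F_{(i)}=h_i$ in $\QSym$); and the final reduction runs through the coproduct of $F_\gamma$ in $\QSym$, not through $\Delta(\nch_j)$ in $\NSym$ as your last sentence suggests, since the latter would entangle you with $\Delta(f)$ for arbitrary $f$. Your observation that the $\psi$-transfer via Proposition~\ref{prop:psi-adjoint} does not close the loop here is also correct.
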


The next two definitions are made in \cite{BBSSZ2014}.
\begin{definition}\label{def:BBSSZ-creation1}\cite[Definition 3.1]{BBSSZ2014} The \svwMar{\emph{noncommutative Bernstein operator}} $\bB_m$ is defined by 
$$\bB_m 
\svw{=} \sum_{i\ge 0} (-1)^i \nch_{m+i} F^\perp_{\svw{(1^i)}},$$
and for  $\alpha\in \bZ^m$, 
\[ \bB_\alpha \svw{=}\bB_{\alpha_1}\cdots\bB_{\alpha_m}.\]
Note that when $i=0$, \svw{$(1^0)$} is the empty composition and thus $F^\perp_{\svw{(1^0)}}(f)=f=F^\perp_\emptyset(f)$ for all $f\in\Nsym$, since $F_\emptyset=1$ in $\Qsym$.  
Also $F^\perp_{\svw{(1^i)}}(1)=F^\perp_{\svw{(i)}}(1)=\begin{cases} 0 & i>0,\\ 1 &i=0. \end{cases}$ 
\end{definition}

\svw{While we chose duality to define immaculate functions, the following is the original definition, which was proven to be equivalent in \cite{BBSSZ2014}.}
\begin{definition}\label{def:BBSSZ-creation2}\cite[Definition 3.2]{BBSSZ2014} For  any $\alpha\in \bZ^m$, \svwMar{the \emph{immaculate function} $\nci_\alpha\in \Nsym$ is given} by 
\[ \nci_\alpha \svw{=} \bB_\alpha(1)=\bB_{\alpha_1}\cdots\bB_{\alpha_m}(1).\]
\end{definition}

This definition was inspired by Bernstein's original definition in the Hopf algebra of symmetric functions for a Schur function $s_\alpha$ indexed by any $m$-tuple $\alpha\in \bZ^m$.

As observed in \cite[Example 3.3]{BBSSZ2014}, we have
\[ \nci_{(m)}=\bB_{m}(1)=\nch_m, \ \nci_{(a,b)}=\bB_a(\nch_b)=\nch_a\nch_b-\nch_{a+1}\nch_{b-1}.\]
Applying $\psi$ to Lemma~\ref{lem:BBSSZ2014-lem2.6}, and using Proposition~\ref{prop:psi-adjoint} and the fact that $\psi(F_\alpha)=\svw{F_{\alpha^c}}$, so that \svw{$\psi(F_{(1^i)})=F_{(i)}$} in $\Nsym_i$, we obtain
\begin{lemma}\label{lem:rs-adjoint-lem2.6} For $i,j>0$ and $f\in \Nsym$, 
\[F^\perp_{\svw{(i)}}(f\nce_j)=F^\perp_{\svw{(i)}}(f) \nce_j+ F^\perp_{\svw{(i-1)}}(f)\nce_{j-1};\qquad
F^\perp_{\svw{(1^i)}}(f\nce_j)=\sum_{k=0}^{\min(i,j)} F^\perp_{\svw{(1^{i-k})}}(f) \nce_{j-k}.\]
In particular we have 

$ F_{\svw{(1^i)}}^\perp(\nce_j)=\begin{cases} 0, & i>j\\
                                            \nce_{j-i}, & 1\le i\le j\\
                                             \nce_j, &i=0;
\end{cases}$
\quad
$ F_{\svw{(i)}}^\perp(\nce_j)=\begin{cases} 0, & i>1\\ 
                                            \nce_{j-1}, & i=1\\
                                             \nce_j, &i=0.
\end{cases}$
\end{lemma}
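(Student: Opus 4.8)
The plan is to obtain Lemma~\ref{lem:rs-adjoint-lem2.6} as a direct translation of Lemma~\ref{lem:BBSSZ2014-lem2.6} under the automorphism $\psi$, using Proposition~\ref{prop:psi-adjoint} as the bridge between adjoint operators on $\NSym$ and the $\psi$ involution. Recall that $\psi$ is an involution on $\NSym$ with $\psi(\nch_\alpha) = \nce_\alpha$ (in particular $\psi(\nch_j) = \nce_j$), that $\psi$ is an algebra automorphism so $\psi(f\nch_j) = \psi(f)\psi(\nch_j) = \psi(f)\nce_j$, and that on the fundamental side $\psi(F_\alpha) = F_{\alpha^c}$, so in degree $i$ we have $\psi(F_{(1^i)}) = F_{(i)}$ and $\psi(F_{(i)}) = F_{(1^i)}$. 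Proposition~\ref{prop:psi-adjoint} gives $F^\perp_\alpha(\psi(H)) = \psi[F^\perp_{\alpha^c}(H)]$, equivalently $\psi[F^\perp_\alpha(H)] = F^\perp_{\alpha^c}(\psi(H))$.

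First I would establish the first identity. Start from the $\NSym$ element $f$, write $g = \psi(f)$, and apply $\psi$ to the first recursion in Lemma~\ref{lem:BBSSZ2014-lem2.6}, namely $F^\perp_{(1^i)}(f\nch_j) = F^\perp_{(1^i)}(f)\nch_j + F^\perp_{(1^{i-1})}(f)\nch_{j-1}$. Applying $\psi$ to both sides: the left side becomes $\psi[F^\perp_{(1^i)}(f\nch_j)] = F^\perp_{(i)}(\psi(f\nch_j)) = F^\perp_{(i)}(\psi(f)\nce_j) = F^\perp_{(i)}(g\nce_j)$, using Proposition~\ref{prop:psi-adjoint} (with $(1^i)^c = (i)$ in degree $i$) and that $\psi$ is an algebra map. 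The right side, since $\psi$ is linear and multiplicative, becomes $\psi[F^\perp_{(1^i)}(f)]\psi(\nch_j) + \psi[F^\perp_{(1^{i-1})}(f)]\psi(\nch_{j-1}) = F^\perp_{(i)}(g)\nce_j + F^\perp_{(i-1)}(g)\nce_{j-1}$. Renaming $g$ back to $f$ gives exactly the claimed recursion $F^\perp_{(i)}(f\nce_j) = F^\perp_{(i)}(f)\nce_j + F^\perp_{(i-1)}(f)\nce_{j-1}$. The second recursion is obtained identically by applying $\psi$ to $F^\perp_{(i)}(f\nch_j) = \sum_{k=0}^{\min(i,j)} F^\perp_{(i-k)}(f)\nch_{j-k}$, noting that $(i)^c = (1^i)$ and $(i-k)^c = (1^{i-k})$ in the relevant degrees, yielding $F^\perp_{(1^i)}(f\nce_j) = \sum_{k=0}^{\min(i,j)} F^\perp_{(1^{i-k})}(f)\nce_{j-k}$.

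For the two closed-form ``in particular'' statements, I would apply $\psi$ to the corresponding displays in Lemma~\ref{lem:BBSSZ2014-lem2.6}. From $F^\perp_{(i)}(\nch_j)$ equal to $0$, $\nch_{j-i}$, or $\nch_j$ according as $i>j$, $1\le i\le j$, or $i=0$, apply $\psi$ using Proposition~\ref{prop:psi-adjoint}: $\psi[F^\perp_{(i)}(\nch_j)] = F^\perp_{(1^i)}(\psi(\nch_j)) = F^\perp_{(1^i)}(\nce_j)$, while the right sides become $0$, $\nce_{j-i}$, $\nce_j$ respectively, giving the stated formula for $F^\perp_{(1^i)}(\nce_j)$. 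Similarly $\psi[F^\perp_{(1^i)}(\nch_j)] = F^\perp_{(i)}(\nce_j)$ yields the formula for $F^\perp_{(i)}(\nce_j)$. (One could alternatively just set $f=1$ in the two recursions together with the base value $F^\perp_{(i)}(1) = F^\perp_{(1^i)}(1) = \delta_{i,0}$ recorded in Definition~\ref{def:BBSSZ-creation1}, which also follows by applying $\psi$ to $F^\perp_{(1^i)}(1) = \delta_{i,0}$.) The only thing requiring care, and the most likely source of a slip, is keeping the complement bookkeeping straight: $\psi(F_\alpha) = F_{\alpha^c}$ depends on the ambient degree, and $(1^i)^c = (i)$, $(i)^c = (1^i)$ hold precisely because we are working inside $\NSym_i$ (equivalently $\QSym_i$); this is exactly the observation the paper flags just before the lemma, namely that $\psi(F_{(1^i)}) = F_{(i)}$ in $\NSym_i$. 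Beyond that, every step is a mechanical transport of an already-proven identity through an algebra automorphism, so there is no real obstacle.
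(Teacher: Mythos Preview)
Your proof is correct and follows essentially the same approach as the paper: the paper's entire argument is the single line ``Applying $\psi$ to Lemma~\ref{lem:BBSSZ2014-lem2.6}, and using Proposition~\ref{prop:psi-adjoint} and the fact that $\psi(F_\alpha)=F_{\alpha^c}$, so that $\psi(F_{(1^i)})=F_{(i)}$ in $\Nsym_i$, we obtain\ldots'', and you have simply unpacked that sentence in detail. Your remark that surjectivity of $\psi$ lets one rename $g=\psi(f)$ back to $f$ is the only point the paper leaves entirely implicit, and it is handled correctly.
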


Now we define new  operators as follows.
\begin{definition}\label{def:psi-creation} Define the \svwMar{\emph{noncommutative Bernstein operator}} $\bB^{r\!s}_m$  by 
$$\bB^{r\!s}_m \svw{=} \sum_{i\ge 0} (-1)^i \nce_{m+i} F^\perp_{\svw{(i)}},$$
and for  $\alpha\in \bZ^m$, 
\[ \bB^{r\!s}_\alpha \svw{=} \bB^{r\!s}_{\alpha_1}\cdots \bB^{r\!s}_{\alpha_m}.\]

Note that when $i=0$, \svw{this is the empty composition and $F_\emptyset=1$ in $\Qsym$, and thus $F^\perp_{(0)}(f)=f = F^\perp_{\emptyset}(f)$} for all $f\in\Nsym$. %
\end{definition}

Furthermore we have \svw{the following.}
\begin{lemma} \label{lem:row-strict-creation}  For  $\alpha\in \bZ^m$, 
$\psi(\nci_\alpha)=\bB^{r\!s}_\alpha(1)$.
\end{lemma}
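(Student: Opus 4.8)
The plan is to show that the involution $\psi$ intertwines the two families of Bernstein operators, namely that $\psi\circ\bB_m = \bB^{r\!s}_m\circ\psi$ as linear maps on $\NSym$, and then to deduce the statement by induction on the number of parts of $\alpha$.

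First I would fix $H\in\NSym$ and expand $\psi(\bB_m(H))$ using Definition~\ref{def:BBSSZ-creation1}. Since $\psi$ is an algebra automorphism of $\NSym$ (by \eqref{eqn:psinsym}), it distributes over the sum and over each product $\nch_{m+i}\cdot F^\perp_{(1^i)}(H)$; note this sum is effectively finite, since $F^\perp_{(1^i)}$ strictly lowers degree on homogeneous components, so no convergence issue arises. This gives $\psi(\bB_m(H)) = \sum_{i\ge 0}(-1)^i\,\psi(\nch_{m+i})\,\psi\!\left(F^\perp_{(1^i)}(H)\right)$. Now $\psi(\nch_{m+i})=\nce_{m+i}$, and by Proposition~\ref{prop:psi-adjoint} — specifically the identity $\psi[F^\perp_{(1^i)}(H)] = F^\perp_{(i)}(\psi(H))$, which uses the elementary composition identity $(1^i)^c=(i)$, both being the empty composition when $i=0$ — each term becomes $\nce_{m+i}\,F^\perp_{(i)}(\psi(H))$. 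Summing over $i$ recovers exactly $\bB^{r\!s}_m(\psi(H))$ by Definition~\ref{def:psi-creation}, establishing the intertwining relation.

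With this relation in hand, I would prove $\psi(\nci_\alpha)=\bB^{r\!s}_\alpha(1)$ by induction on $m=\ell(\alpha)$. Writing $\nci_\alpha = \bB_{\alpha_1}\bigl(\nci_{(\alpha_2,\ldots,\alpha_m)}\bigr)$ and applying the intertwining relation once peels off one operator: $\psi(\nci_\alpha) = \bB^{r\!s}_{\alpha_1}\bigl(\psi(\nci_{(\alpha_2,\ldots,\alpha_m)})\bigr)$. The inductive hypothesis gives $\psi(\nci_{(\alpha_2,\ldots,\alpha_m)}) = \bB^{r\!s}_{(\alpha_2,\ldots,\alpha_m)}(1)$, and the base case $m=0$ is simply $\psi(1)=1$. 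Composing yields $\psi(\nci_\alpha)=\bB^{r\!s}_{\alpha_1}\cdots\bB^{r\!s}_{\alpha_m}(1)=\bB^{r\!s}_\alpha(1)$, as required.

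I do not anticipate a genuine obstacle: all of the substance is already packaged in Proposition~\ref{prop:psi-adjoint}. The only points that need care are confirming that $\psi$ passes through the product $\nch_{m+i}\cdot F^\perp_{(1^i)}(H)$ (it does, being a ring homomorphism on $\NSym$), observing that the defining sums are finite so the manipulations are legitimate, and keeping the combinatorics of $(1^i)^c=(i)$ straight, including the degenerate $i=0$ case.
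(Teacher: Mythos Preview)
Your proposal is correct and follows essentially the same route as the paper: establish the intertwining relation $\psi\circ\bB_m=\bB^{r\!s}_m\circ\psi$ via Proposition~\ref{prop:psi-adjoint} and the fact that $\psi$ is multiplicative with $\psi(\nch_{m+i})=\nce_{m+i}$, then induct on the number of parts. The only cosmetic difference is that the paper verifies $m\le 2$ by hand before invoking induction, whereas you start cleanly from the trivial base case $m=0$; your version is, if anything, slightly tidier.
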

\begin{proof} From the above properties, it is clear that 
\[ \bB^{r\!s}_{m}(1)=\nce_m, \ \psi(\nci_{(a,b)})=\bB^{r\!s}_a(\nce_b)=\nce_a\nce_b-\nce_{a+1}\nce_{b-1}.\]
Hence the result is true for $m\le 2$.   Let $f\in \Nsym$. We claim that 
\begin{equation}\label{eqn:key-psi-op}\psi(\bB_{m}(f)) = \bB^{r\!s}_{m}(\psi(f)).\end{equation}
We have 
\begin{equation*}\begin{gathered}\psi(\bB_{m}(f))=\psi \left[\sum_{i\ge 0} (-1)^i \nch_{m+i} F_{\svw{(1^i)}}^\perp(f)\right]
=\sum_{i\ge 0} (-1)^i \nce_{m+i} \psi [ F_{\svw{(1^i)}}^\perp(f)]\\
=\sum_{i\ge 0} (-1)^i\nce_{m+i} F_{\svw{(i)}}^\perp(\psi(f))=\bB^{r\!s}_{m}(\psi(f))\svw{,}
\end{gathered}
\end{equation*}
where  the penultimate equality is thanks to Proposition~\ref{prop:psi-adjoint}.

Since  for  $\alpha\in \bZ^m$, 
\[\bB_\alpha(1)=\bB_{\alpha_1}(f), \ f=\bB_{\alpha_2}\cdots\bB_{\alpha_m}(1),\]
the result now follows by induction.\end{proof}

\begin{theorem}\label{thm:row-strict-via-creationops} The row-strict immaculate function $\ncri_\alpha$ can be defined as the result of applying a creation operator as follows:
\[\ncri_\alpha=\bB^{r\!s}_\alpha(1).\]
\end{theorem}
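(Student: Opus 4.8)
The plan is to identify $\bB^{r\!s}_\alpha(1)$ with $\psi(\nci_\alpha)$ via Lemma~\ref{lem:row-strict-creation}, and then recognize that $\psi(\nci_\alpha)$ is precisely the row-strict immaculate function $\ncri_\alpha$ by its defining duality property. Concretely, recall that immediately after Corollary~\ref{cor:basis} it was established that $\psi(\Imm_\alpha) = \rI_\alpha$, i.e. $\psi(\nci_\alpha) = \ncri_\alpha$, using that the $\psi$-pairing is invariant (Proposition~\ref{prop:invariance-of-pairing-under-psi}) together with Theorem~\ref{the:RIasDI}. So the proof is a two-line chain: $\ncri_\alpha = \psi(\nci_\alpha) = \bB^{r\!s}_\alpha(1)$, where the first equality is the duality computation already in the text and the second is Lemma~\ref{lem:row-strict-creation}.

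The first step is to state explicitly that $\ncri_\alpha = \psi(\nci_\alpha)$: this follows because $\langle \psi(\nci_\alpha), \RI_\beta \rangle = \langle \psi(\nci_\alpha), \psi(\DI_\beta)\rangle = \langle \nci_\alpha, \DI_\beta\rangle = \delta_{\alpha\beta}$, using Theorem~\ref{the:RIasDI} for the first equality and Proposition~\ref{prop:invariance-of-pairing-under-psi} for the second, and since $\{\RI_\beta\}$ is a basis (Corollary~\ref{cor:basis}) this characterizes $\psi(\nci_\alpha)$ as the element dual to $\RI_\beta$, which is by definition $\ncri_\alpha$. The second step is simply to invoke Lemma~\ref{lem:row-strict-creation}, which gives $\psi(\nci_\alpha) = \bB^{r\!s}_\alpha(1)$. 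Combining, $\ncri_\alpha = \bB^{r\!s}_\alpha(1)$.

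There is no real obstacle here; all the substantive work — proving $\psi$ intertwines $\bB_m$ with $\bB^{r\!s}_m$, and hence $\psi(\nci_\alpha) = \bB^{r\!s}_\alpha(1)$ — was done in Lemma~\ref{lem:row-strict-creation}, and the identification $\psi(\nci_\alpha) = \ncri_\alpha$ was recorded right after Corollary~\ref{cor:basis}. If anything, the only point requiring a moment's care is making sure the duality argument for $\psi(\nci_\alpha) = \ncri_\alpha$ is cleanly cited rather than silently used; the theorem statement essentially just repackages these two facts. A reasonable write-up would therefore be three or four sentences, with the computation
\[
\langle \psi(\nci_\alpha), \RI_\beta\rangle = \langle \psi(\nci_\alpha), \psi(\DI_\beta)\rangle = \langle \nci_\alpha, \DI_\beta\rangle = \delta_{\alpha\beta}
\]
made explicit, followed by the appeal to Lemma~\ref{lem:row-strict-creation}.
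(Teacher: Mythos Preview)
Your proposal is correct and matches the paper's own proof essentially verbatim: the paper simply says the result is immediate from Lemma~\ref{lem:row-strict-creation} together with the already-established identity $\ncri_\alpha=\psi(\nci_\alpha)$. Your write-up is more detailed in re-deriving $\psi(\nci_\alpha)=\ncri_\alpha$ via the pairing, but the logical route is the same.
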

\begin{proof} Immediate from the preceding lemma, since we already know that $\ncri_\alpha=\psi(\nci_\alpha)$.
\end{proof}

Finally, just as left multiplication by $\nch_m$ can be expressed in terms of creation operators 
\cite[Remark 3.6]{BBSSZ2014}, we have \svw{the following.}
\begin{lemma}  Left multiplication by $\nch_m$ in $\Nsym$ can be expressed as applying the operator
\[\nch_m=\sum_{i\ge 0} \bB_{m+1} F_{\svw{(i)}}^\perp,\]
and left multiplication by $\nce_m$ in $\Nsym$ can be expressed as  applying the operator
\[\nce_m=\sum_{i\ge 0} \bB^{r\!s}_{m+1} F_{\svw{(1^i)}}^\perp.\]
\end{lemma}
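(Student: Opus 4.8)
The plan is to establish the first identity directly and then obtain the second by applying $\psi$, using Lemma~\ref{lem:row-strict-creation} and Proposition~\ref{prop:psi-adjoint}. For the statement that left multiplication by $\nch_m$ equals $\sum_{i\ge 0} \bB_{m+1} F^\perp_{(1^i)}$, I would follow the approach of \cite[Remark 3.6]{BBSSZ2014}: expand the right-hand side using Definition~\ref{def:BBSSZ-creation1}, writing $\bB_{m+1} = \sum_{j\ge 0}(-1)^j \nch_{m+1+j} F^\perp_{(1^j)}$, so that
\[
\sum_{i\ge 0}\bB_{m+1}F^\perp_{(1^i)} = \sum_{i,j\ge 0}(-1)^j \nch_{m+1+j}\, F^\perp_{(1^j)}F^\perp_{(1^i)}.
\]
First I would apply both sides to an arbitrary $f\in\NSym$ and compare with $\nch_m f$. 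The key computational input is that $F^\perp_{(1^j)}F^\perp_{(1^i)} = \binom{i+j}{i}F^\perp_{(1^{i+j})}$ (this is the comultiplication rule for $F_{(1^i)}$, or can be checked from Lemma~\ref{lem:BBSSZ2014-lem2.6} by induction), so after setting $k=i+j$ and summing the binomial coefficients one gets a telescoping/cancellation leaving only the $k=0$ term contributing $\nch_{m+1}$-free pieces — more precisely, one reindexes so the alternating sum $\sum_{j=0}^k (-1)^j\binom{k}{j}$ collapses, forcing $k=0$ and yielding $\nch_m f$ up to the index shift that turns $\nch_{m+1}$ into $\nch_m$. I should double-check the exact bookkeeping of the degree shift here, since the operator is $\bB_{m+1}$ rather than $\bB_m$.

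Alternatively, and perhaps more cleanly, I would use the known fact \cite[Remark 3.6]{BBSSZ2014} as a black box: the identity $\nch_m = \sum_{i\ge 0}\bB_{m+1}F^\perp_{(1^i)}$ is exactly that remark, so the first half of the lemma requires no new argument beyond citing it. Then the genuinely new content is the second identity, $\nce_m = \sum_{i\ge 0}\bB^{r\!s}_{m+1}F^\perp_{(i)}$. For this I would apply $\psi$ to the first identity. Since $\psi(\nch_m) = \nce_m$, the left side becomes $\nce_m$ acting on $\psi(f)$; on the right side, $\psi$ intertwines $\bB_{m+1}$ with $\bB^{r\!s}_{m+1}$ by Equation~\eqref{eqn:key-psi-op} in the proof of Lemma~\ref{lem:row-strict-creation}, and $\psi$ intertwines $F^\perp_{(1^i)}$ with $F^\perp_{(i)}$ by Proposition~\ref{prop:psi-adjoint} (using $\psi(F_{(1^i)}) = F_{(i)}$). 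Care is needed because $\bB_{m+1}$ and $F^\perp_{(1^i)}$ are composed, so I would verify $\psi\big(\bB_{m+1}F^\perp_{(1^i)}(f)\big) = \bB^{r\!s}_{m+1}F^\perp_{(i)}(\psi(f))$ by applying $\psi$ to $F^\perp_{(1^i)}(f)$ first, using Proposition~\ref{prop:psi-adjoint}, and then \eqref{eqn:key-psi-op}. Summing over $i$ and replacing $\psi(f)$ by an arbitrary element of $\NSym$ (legitimate since $\psi$ is a bijection) gives the result.

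The main obstacle I anticipate is not conceptual but the careful tracking of which composition ($1^i$ versus $(i)$) and which operator appears where when $\psi$ passes through a composition of maps; Proposition~\ref{prop:psi-adjoint} is stated for a single adjoint operator, so I must apply it one layer at a time and confirm the nesting works out. A secondary subtlety is the index shift $m \to m+1$, which must survive the application of $\psi$ unchanged (it does, since $\psi$ preserves degree and acts the same on $\nch_{m+1}$ and $\nce_{m+1}$ within each graded piece). Assuming \cite[Remark 3.6]{BBSSZ2014} for the $\nch_m$ identity, the proof reduces to these routine but delicate verifications, and I would present it as: state that the first identity is \cite[Remark 3.6]{BBSSZ2014}, then derive the second by applying $\psi$ and invoking \eqref{eqn:key-psi-op} together with Proposition~\ref{prop:psi-adjoint}.
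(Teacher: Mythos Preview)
Your overall strategy---take the $\nch_m$ identity from \cite[Remark 3.6]{BBSSZ2014} and derive the $\nce_m$ identity by applying $\psi$ via Equation~\eqref{eqn:key-psi-op} and Proposition~\ref{prop:psi-adjoint}---is exactly the paper's one-line proof. However, there are two concrete problems in your write-up.

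First, you have swapped the subscripts on $F^\perp$ throughout. The lemma as stated has $F^\perp_{(i)}$ in the $\nch_m$ identity and $F^\perp_{(1^i)}$ in the $\nce_m$ identity; you have written the opposite in every line. This is not cosmetic: since $(i)^c=(1^i)$, applying $\psi$ sends $F^\perp_{(i)}\mapsto F^\perp_{(1^i)}$, so with the correct reading you pass from the first identity to the second. Your swapped version is internally consistent under $\psi$ but proves a different statement, and citing \cite[Remark 3.6]{BBSSZ2014} would not give you the formula you wrote down.

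Second, your ``direct'' route is based on a false identity. You claim $F^\perp_{(1^j)}F^\perp_{(1^i)}=\binom{i+j}{i}F^\perp_{(1^{i+j})}$, but since $F_{(1^k)}=e_k$ in $\QSym$, this would force $e_ie_j=\binom{i+j}{i}e_{i+j}$, which fails already for $i=j=1$: one has $e_1^2=M_{(2)}+2M_{(1,1)}=F_{(2)}+F_{(1,1)}\neq 2F_{(1,1)}$. So this computation cannot be made to telescope as you describe, and you should drop it entirely. Once the subscripts are corrected, your ``alternative'' paragraph \emph{is} the proof; nothing further is needed.
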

\begin{proof} Immediate from Equation~\eqref{eqn:key-psi-op}.
\end{proof}

\subsection{Results obtained by using $\psi$}\label{sec:psiresults}

We can immediately obtain the row-strict analogue of many results in~\cite{BBSSZ2014} by using the involution  $\psi$.  We list here the most pertinent for the remainder of the paper.  We leave results for skew row-strict dual immaculate functions to the next section, as the combinatorial definition is not obviously equivalent.  

\begin{theorem}\label{thm:psiresults}
\begin{enumerate}
\item \cite[Lemma 3.4]{BBSSZ2014} For $s\ge 0, m\in \bZ$ and $f\in \Nsym$, 
\begin{align*}
\bB_m(f)\nch_s&=\bB_{m+1}(f) \nch_{s-1}+\bB_m(f\nch_s)\\
\iffpsi
\bB^{r\!s}_m(f)\nce_s&=\bB^{r\!s}_{m+1}(f) \nce_{s-1}+\bB^{r\!s}_m(f\nce_s).
\end{align*}

\item \label{thm:pieri} \cite[Theorem 3.5]{BBSSZ2014} (Multiplicity-free right Pieri rule) For a composition $\alpha$ and $s \geq 0$,
\[\nci_\alpha\nch_s=\sum_{\alpha_{\subset_s}\beta} \nci_\beta 
\iffpsi
\ncri_\alpha\nce_s=\sum_{\alpha_{\subset_s}\beta} \svw{\ncri_\beta .} \]

\item \label{thm:pieri2} \cite[Proposition 3.32]{BBSSZ2014} (Another multiplicity-free right Pieri rule)
For a composition $\alpha$ and $s \geq 0$, 
\[\nci_\alpha \nci_{\svw{(1^s)}} =\nci_\alpha \nce_{s} =\sum\limits_{\beta} \nci_\beta \iffpsi
\ncri_\alpha \ncri_{\svw{(1^s)}} =\ncri_\alpha \nch_s= \sum\limits_{\beta} \ncri_\beta,\]
where the summation ranges over compositions of $\beta$ of $|\alpha| + s$ such that $\alpha_i \leq \beta_i \leq \alpha_i+1$ and $\alpha_i = 0$ for $i > \ell(\alpha)$. 

\item \cite[Corollary 3.31]{BBSSZ2014} \svwMar{For $n\geq 0$,}
\[\nci_{\svw{(1^n)}}=\sum_{\alpha\vDash n} (-1)^{n-\ell(\alpha)} \nch_\alpha = \nce_n
\iffpsi
\ncri_{\svw{(1^n)}}=\sum_{\alpha\vDash n} (-1)^{n-\ell(\alpha)} \nce_\alpha = \nch_n.\]

\item \cite[Theorem 3.27]{BBSSZ2014} (Jacobi-Trudi) 
For $\ell(\alpha) = m$,  
\[\nci_\alpha = \sum_{\sigma \in S_m}(-1)^{\svw{\sgn(\sigma)}} \nch_{(\alpha_1+\sigma_1-1,\alpha_2+\sigma_2-2,\ldots,\alpha_m+\sigma_m-m)}\]
$\iffpsi$
\[\ncri_\alpha = \sum_{\sigma \in S_m}(-1)^{\svw{\sgn(\sigma)}} \nce_{(\alpha_1+\sigma_1-1,\alpha_2+\sigma_2-2,\ldots,\alpha_m+\sigma_m-m)}\]\svw{where $S_m$ is the symmetric group on $m$ elements and $(-1)^{\sgn(\sigma)}$ is the sign of $\sigma$.}

\item %
From \cite[\svw{Lemma 2.5}]{BBSSZ2014} and Equation~\eqref{eqn:key-psi-op},
\[ F^\perp_{\svw{(1^r)}}(\nci_{\svw{(1^n)}})=\nci_{\svw{(1^{n-r})}}, \text{ and for } s>1, F^\perp_{\svw{(s)}}(\nci_{\svw{(1^n)}})=0\]
which is equivalent via the map $\psi$ to 
\[ F^\perp_{\svw{(r)}}(\ncri_{\svw{(1^n)}})=\ncri_{\svw{(1^{n-r})}}, \text{ and for } s>1, F^\perp_{\svw{(1^s)}}(\ncri_{\svw{(1^n)}})=0.\]

\item \cite[Proposition 3.16 and Corollary 3.18]{BBSSZ2014} 
\[\nch_\beta=\sum_{\alpha_{\ge_\ell} \beta} K_{\alpha, \beta}\, \nci_\alpha 
\iffpsi \nce_\beta=\sum_{\alpha_{\ge_\ell} \beta} K_{\alpha, \beta} \,\ncri_\alpha\]
and \svw{by Theorem~\ref{the:KasL}}
\[\nch_\beta=\sum_{\alpha } K^*_{\alpha,\beta}\,\ncri_\alpha \iffpsi \nce_\beta = \sum_{\alpha} K^*_{\alpha,\beta}\,\nci_\alpha.\]

\item \cite[Theorem 3.25]{BBSSZ2014} The ribbon function $\ncr_\beta$ expands positively in both immaculate bases:
\[\ncr_\beta=\sum_{\alpha\ge_\ell \beta} L_{\alpha, \beta} \nci_\alpha
\iffpsi
\ncr_{\beta^c}=\sum_{\alpha\ge_\ell \beta} L_{\alpha, \beta} \ncri_\alpha.\]
\sheilaMar{(Recall that $L_{\alpha,\beta}$ denotes the number of \emph{standard} immaculate tableaux of shape $\alpha$ and descent composition $\beta$.)}

\item \label{schurtodI} \cite[Theorem 3.38]{BBSSZ2014} The Schur function $s_\lambda$ with $\ell(\lambda)=k$ expands into the dual immaculate and row-strict dual immaculate bases as follows:
\[s_\lambda = \sum_{\sigma \in S_k} (-1)^{\svw{\sgn(\sigma)}}\dI_{\sigma(\lambda)}\iffpsi
s_{\lambda'} = \sum_{\sigma \in S_k} (-1)^{\svw{\sgn(\sigma)}}\rdI_{\sigma(\lambda)}\]
\sheilaMar{
where, 
for $\lambda$ a partition and $\sigma \in \svwMar{S_{k}}$, we define $\sigma(\lambda) =  (\lambda_{\sigma_1}+1-\sigma_1,\ldots, \lambda_{\sigma_k}+k-\sigma_k)$ provided $\lambda_{\sigma_i}+i-\sigma_i>0$ for each $i$.  If the latter condition is not satisfied, we define 
$\dI_{\sigma(\lambda)}=0=\rdI_{\sigma(\lambda)}.$}

\item \cite[Theorem 1.1]{AHM2018} For $\alpha$ a composition and $c_{\alpha\beta}\geq 0$, 
\[\dI_\alpha = \sum_{\beta} c_{\alpha\beta} \qsy_\beta\iffpsi \rdI_\alpha = \sum_{\beta} c_{\alpha\beta} \rqsy_\beta,\]
where $\qsy$ and $\rqsy$ are the Young quasisymmetric Schur and Young row-strict quasisymmetric Schur functions.

\end{enumerate}
\end{theorem}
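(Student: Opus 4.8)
The plan is to treat this exactly like the other items of the theorem: it is a ``transport along $\psi$'' statement, whose input is the known expansion $\dI_\alpha=\sum_\beta c_{\alpha\beta}\qsy_\beta$ with $c_{\alpha\beta}\ge 0$ from \cite[Theorem~1.1]{AHM2018}. So I would simply apply the automorphism $\psi$ of $\Qsym$ to both sides of that identity and rewrite each side using facts already in hand.

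First I would use linearity: since $\psi$ is $\bQ$-linear and the $c_{\alpha\beta}$ are scalars, $\psi(\dI_\alpha)=\sum_\beta c_{\alpha\beta}\,\psi(\qsy_\beta)$, with the coefficients unchanged and hence still nonnegative. Next, by Theorem~\ref{the:RIasDI} the left-hand side equals $\rdI_\alpha$. It then remains only to identify $\psi(\qsy_\beta)$ with $\rqsy_\beta$, after which we read off $\rdI_\alpha=\sum_\beta c_{\alpha\beta}\rqsy_\beta$, which is the asserted statement (with the very same $c_{\alpha\beta}\ge 0$).

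The one substantive point — and the only real obstacle — is the identity $\psi(\qsy_\beta)=\rqsy_\beta$ relating the Young quasisymmetric Schur and the row-strict Young quasisymmetric Schur bases. This is the quasisymmetric-Schur analogue of the relation $\psi(\dI_\alpha)=\rdI_\alpha$ proved in Theorem~\ref{the:RIasDI}, and I would either cite it directly from \cite{MR2014} (in the Young normalization used here; see also \cite{LMvW2013}), or, if the precise normalization is not recorded there, prove it in the same two-step manner as Theorems~\ref{thm:rsfunddecomp} and~\ref{the:RIasDI}: both $\qsy_\beta$ and $\rqsy_\beta$ have fundamental expansions running over a common set of standard composition tableaux of shape $\beta$, with the two relevant descent sets being set-complementary, so applying $\psi(F_\gamma)=F_{\gamma^c}$ (Equation~\eqref{eqn:psiF}) converts one expansion into the other. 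Given that identity, no further work is needed.
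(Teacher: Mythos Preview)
Your approach is exactly the paper's: the theorem is stated without a detailed proof, under the blanket assertion that each item follows from its cited source by applying the involution $\psi$, and you carry this out correctly for item~(10), including correctly isolating the one extra ingredient $\psi(\qsy_\beta)=\rqsy_\beta$ needed there. The same template (apply $\psi$, use $\psi(\nch_\alpha)=\nce_\alpha$, $\psi(\ncr_\beta)=\ncr_{\beta^c}$, $\psi(\nci_\alpha)=\ncri_\alpha$, $\psi(\dI_\alpha)=\rdI_\alpha$, and Equation~\eqref{eqn:key-psi-op} for the operator identities) handles the remaining items, so nothing is missing.
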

\section{Skew row-strict dual immaculate functions}

Following the work of Berg et. al.~\cite{BBSSZ2014}, we define the poset $\mathfrak{P}$ of immaculate tableaux. The labelled poset $\mathfrak{P}$  is on the set of all compositions. Place an arrow from $\alpha$ to $\beta$ if $\beta\subset \alpha, $  and \svwMar{$|\alpha|-|\beta|=1$, denoted by} $\beta \subset_1 \alpha$. The label of $m$ on each cover $\alpha \overset{m}\longrightarrow \beta$ denotes the row containing the single additional box. Denote \svwMar{a} path from $\alpha$ to $\beta$ in $\mathfrak{P}$ by $P=[\alpha, \beta]$.
\begin{figure}[htb] \centering
 
		\scalebox{.6}{			
			\begin{tikzpicture}
			\newcommand*{\xdist}{*3}
			\newcommand*{\ydist}{*2.2}
\node (n0) at (0.00\xdist,0\ydist) 
			{
				$\emptyset$ 
			}; 
\node (n1) at (0,1\ydist) 
			{
				$\tableau{{}}$
			}; 
\node (n21) at (-1\xdist,2\ydist) 
			{
				$\tableau{{}\\{}}  $ 
			}; 

\node (n22) at (1\xdist,2\ydist) 
			{
				$\tableau{ {}&{}}$
			}; 
\node (n31) at (-2\xdist,3\ydist) 
			{
				$\tableau{ {}\\{}\\{}}$
			}; 
\node (n32) at (-.75\xdist,3\ydist) 
			{ 
				$\tableau{ {}&{}\\{}}$
			}; 
\node (n33) at (.75\xdist,3\ydist) 
			{
				$\tableau{ {}\\{}&{}}  $
			}; 
\node (n34) at (2\xdist,3\ydist) 
			{
				$\tableau{ {}&{}&{}}$
			}; 
\node (n41) at (-3\xdist,4.5\ydist) 
			{
				$\tableau{ {}\\{}\\{}\\{}}  $
			}; 
\node (n42) at (-2.125\xdist,4.5\ydist) 
			{
				$\tableau{ {}&{}\\{}\\{}}  $ 
			}; 

\node (n43) at (-1.25\xdist,4.5\ydist) 
			{
				$\tableau{ {}\\{}&{}\\{}}$
			}; 
\node (n44) at (.5\xdist,4.5\ydist) 
			{
				$\tableau{ {}&{}&{}\\{}}  $ 
			}; 
\node (n45) at (1.275\xdist,4.5\ydist) 
			{
				$\tableau{ {}&{}\\{}&{}}  $ 
			}; 
\node (n46) at (-.375\xdist,4.5\ydist) 
			{
				$\tableau{ {}\\{}\\{}&{}}$
			}; 
\node (n47) at (2.25\xdist,4.5\ydist) 
			{
				$\tableau{ {}\\{}&{}&{}}$
			}; 
\node (n48) at (3\xdist,4.5\ydist) 
			{
				$\tableau{ 
				{}&{}&{}&{}}$};
\draw [thick, ->] (n1) -- (n0) node [near start, left] {1};

\draw [thick, ->,red] (n21) -- (n1) node [near start, right] {2};
\draw [thick, ->,blue] (n22) -- (n1) node [near start, left] {1};
 
\draw [thick, ->] (n31) -- (n21) node [near start, right] {3};
\draw [thick, ->,red] (n32) -- (n21) node [near start, right] {2}; 
\draw [thick, ->] (n33) -- (n21) node [near start, left] {1}; 
\draw [thick, ->,blue] (n33) -- (n22) node [near start, right] {2}; 
\draw [thick, ->] (n34) -- (n22) node [near start, left] {1};

\draw [thick, ->] (n41) -- (n31) node [near start, left] {4}; 
\draw [thick, ->] (n42) -- (n31) node [near start, right] {3}; 
\draw [thick, ->] (n43) -- (n31) node [near start, left]{2}; 
\draw [thick, ->] (n46) -- (n31) node [near end, left] {1}; 
\draw [thick, ->] (n43) -- (n32) node [near start, right] {3}; 
\draw [thick, ->] (n44) -- (n32) node [near start, left] {2}; 
\draw [thick, ->,red] (n45) -- (n32) node [near start, left] {1}; 
\draw [thick, ->] (n45) -- (n33) node [near start, right] {2}; 
\draw [thick, ->,blue] (n46) -- (n33) node [near end, left] {3}; 
\draw [thick, ->] (n47) -- (n33) node [near start, left] {1}; 
\draw [thick, ->] (n47) -- (n34) node [near start, right] {2}; 
\draw [thick, ->] (n48) -- (n34) node [right, near start] {1};
\end{tikzpicture}	
	}
 
\caption{\small{The start of the poset $\mathfrak{P}$ with edge labels.  A horizontal 3-strip is shown in red and a vertical 3-strip is shown in blue.}}\label{fig:skewposet}	
\end{figure}

To obtain a standard skew immaculate tableau from a path $P=[\alpha,\beta]$, for each $m_i$, $1\leq i \leq k$, label the rightmost unlabeled cell in row $m_i$ of $\alpha$ with $k-i+1$, see Example~\ref{ex:StandardSkew}. In order to understand the combinatorial models for skew dual immaculate and skew row-strict dual immaculate functions we define two special types of paths. 

\begin{definition}\label{def:skew-paths}A path $P=\{\alpha=\beta^{(0)}\overset{m_1}\longrightarrow\beta^{(1)}\overset{m_2}\longrightarrow \svw{\cdots}\overset{m_k}\longrightarrow\beta^{(k)}=\beta\}$ in the poset $\mathfrak{P}$ is a 
\begin{itemize}
\item {\em horizontal $k$-strip} if $m_1\leq m_2\leq \cdots \leq m_k$, and a
\item {\em vertical $k$-strip} if $m_1>m_2>\cdots >m_k$.
\end{itemize}
\end{definition}

The horizontal $3$-strip (red path) and vertical $3$-strip (blue path) in Figure~\ref{fig:skewposet} give rise to the \svw{following tableaux.}

\begin{center}
\begin{tabular}{cc}
$\tableau{1&2\\&3} $& $\tableau{3\\2\\&1}$\\
horizontal strip & vertical strip
\end{tabular}
\end{center}
\sheila{
We can now make the following definition.
\begin{definition}\label{def:standard-skew-tableau}
A \emph{standard skew immaculate tableau} of shape $\alpha/\beta$ is a filling of the shape $\alpha/\beta$ with the distinct positive integers $\{1,2,\ldots,|\alpha/\beta|\}$, such that rows strictly increase from left to right and the labels in $\alpha/\beta$ in cells that are in the first column of $\alpha$ must increase from bottom to top.  
\end{definition}
}
For a path $P=[\alpha,\beta]$ of length $k$, define the {\em descent set of $P$} to be 
$D(P)=\{k-i:m_i>m_{i+1}\}$ and the {\em weak ascent set of $P$} to 
$A(P)=\{k-i:m_i\leq m_{i+1}\}$. 
\sheilaFeb{
Each such path $P=[\alpha,\beta]$  corresponds to a unique standard skew immaculate 
tableau $T$ of shape $\alpha/\beta$, and conversely.  Furthermore, 
the descent set $D(P)$ coincides with the descent set 
$\Des_{\dI}(T)=\{i: i+1 \text{ appears strictly above $i$ in $T$}\}$, 
and similarly the ascent set $A(P)$ coincides with 
the descent set $\Des_{\rdI}(T)=\{i: i+1 \text{ appears weakly below $i$ in $T$}\}$.}

\begin{example}\label{ex:StandardSkew} For $\alpha/\beta = (3,2,3)/(1,1,2)$, 
\[T=\tableau{{}&{}&1\\{}&2\\{}&3&4}\] is a valid standard skew immaculate tableau.  It corresponds to the path $P = (3,2,3) \overset{1}\rightarrow (2,2,3) \overset{1}\rightarrow (1,2,3) \overset{2}\rightarrow (1,1,3)\overset{3}\rightarrow (1,1,2)$.  Further, $\Des_{\rdI}(T) = \{1,2,3\}$, $D(P)$ is empty, and $A(P) = \{1,2,3\}$.  
\end{example}

Given a path $P=[\alpha, \emptyset]$ corresponding to a standard immaculate tableau $T$, we have that $\Des_{\dI}(T)=D(P)$ and $\Des_{\rdI}(T)=A(P)$, \svw{ by comparing the definitions, and is illustrated} in Figure~\ref{fig:descents}.

\begin{figure}[ht]
\begin{center}
\[T = \tableau{4&7\\2&3&5\\1&6}\]

\[P = (2,3,2)\overset{3}\rightarrow (2,3,1) \overset{1}\rightarrow (1,3,1)\overset{2}\rightarrow (1,2,1) \overset{3}\rightarrow (2,1)\overset{2}\rightarrow (1,1) \overset{2}\rightarrow (1)\overset{1}\rightarrow  \emptyset\]
\end{center}
\caption{The path $P$ has $D(P) = \{1,3,6\}$ and $A(P)=\{2,4,5\}$, while $\Des_{\dI}(T) = \{1,3,6\}$ and $\Des_{\rdI}(T) = \{2,4,5\}$. }\label{fig:descents}
\end{figure}

Note that given a skew immaculate tableau, it can be decomposed into horizontal or vertical strips in several ways.  An example of decomposing a tableau into either horizontal or vertical strips is given in Figure~\ref{fig:horizvertdecomp}.

\begin{figure}[ht]
\[T=\tableau{2&4&5\\&3\\&&1} \]\[ P=(3,2,3)\overset{3}\rightarrow (3,2,2)\overset{3}\rightarrow (3,2,1)\overset{2}\rightarrow (3,1,1) \overset{3}\rightarrow (3,1)\overset{1}\rightarrow (2,1)\]

\caption{The standard skew immaculate tableau $T$ and its corresponding path can be decomposed into maximal horizontal strips $(3,2,3)\overset{3}\rightarrow(3,2,2)\overset{3}\rightarrow (3,2,1)$, $(3,2,1)\overset{2}\rightarrow (3,1,1)\overset{3}\rightarrow (3,1)$, and $(3,1)\overset{1}\rightarrow (2,1)$.  Alternatively, decompose $P$ into maximal vertical strips $(3,2,3)\overset{3}\rightarrow (3,2,2)$, $(3,2,2)\overset{3}\rightarrow (3,2,1)\overset{2}\rightarrow (3,1,1)$, and $(3,1,1)\overset{3}\rightarrow (3,1)\overset{1}\rightarrow (2,1)$.}\label{fig:horizvertdecomp}
\end{figure}

\svw{ 
In~\cite{BBSSZ2014} the poset $\mathfrak{P}$ and horizontal strips are used to define the skew dual immaculate functions as follows.}

\begin{definition}\label{def:BBSSZ2014-skewdI-tab} For $\{\gamma: \beta\subseteq \gamma \subseteq \alpha\}$ an interval in~$\mathfrak{P}$, define the \emph{skew dual immaculate function} to be 
\[\dI_{\alpha/\beta}=\sum_\gamma \langle \mathfrak{S}_\beta \mathbf{h}_\gamma, \dI_\alpha\rangle M_\gamma.\]
\end{definition}

This can be rewritten in terms of both the fundamental basis and the dual immaculate basis. 

\begin{prop}{\cite[Propositions 3.47 and 3.48]{BBSSZ2014}}\label{prop:skewdI}
For $\{\gamma: \beta\subseteq \gamma \subseteq \alpha\}$ an interval in~$\mathfrak{P}$, 
\begin{align}\dI_{\alpha/\beta} & = \sum_\gamma \langle \Imm_\beta \ncr_\gamma, \dI_\alpha\rangle F_\gamma \\
&= \sum_{\gamma} \langle \Imm_\beta \Imm_\gamma,\dI_\alpha\rangle \dI_\gamma\\
&=\sheilaFeb{\sum_{P=[\beta,\alpha]\in \mathfrak{P}} F_{\comp(D(P))}
=\sum_{\substack {T \text{ a standard skew immaculate} 
                              \\ \text{ tableau of shape  }  \alpha/\beta}}
 F_{\comp(\Des_{\dI}(T))};}
\end{align}
in the last line, each path $P$ from $\beta$ to $\alpha$ corresponds to
a unique standard skew immaculate tableau $T$ of shape $\alpha/\beta$.
\end{prop}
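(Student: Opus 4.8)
The plan is to handle the four expressions in turn: the first two come from pure duality, the third from iterating the Pieri rule (and is \cite[Propositions 3.47 and 3.48]{BBSSZ2014}), and the fourth from the path--tableau correspondence set up in this section. For the first two, I would rewrite Definition~\ref{def:BBSSZ2014-skewdI-tab} in operator form: letting $\Imm_\beta^\perp\colon\Qsym\to\Qsym$ be adjoint to left multiplication by $\Imm_\beta$ in $\Nsym$, so $\langle\Imm_\beta f,g\rangle=\langle f,\Imm_\beta^\perp g\rangle$, and using that $\{\nch_\gamma\}$, $\{M_\gamma\}$ are dual bases,
\[\dI_{\alpha/\beta}=\sum_\gamma\langle\Imm_\beta\nch_\gamma,\dI_\alpha\rangle M_\gamma=\sum_\gamma\langle\nch_\gamma,\Imm_\beta^\perp(\dI_\alpha)\rangle M_\gamma=\Imm_\beta^\perp(\dI_\alpha).\]
Expanding $\Imm_\beta^\perp(\dI_\alpha)$ against its dual basis $\{\ncr_\gamma\}$ in the fundamental basis, and against $\{\Imm_\gamma\}$ in the dual immaculate basis, then applying adjointness once more, yields $\dI_{\alpha/\beta}=\sum_\gamma\langle\Imm_\beta\ncr_\gamma,\dI_\alpha\rangle F_\gamma=\sum_\gamma\langle\Imm_\beta\Imm_\gamma,\dI_\alpha\rangle\dI_\gamma$, so the first two identities need nothing beyond the definition and the dual-basis relations of Section~\ref{sec:back}.

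For $\dI_{\alpha/\beta}=\sum_{P=[\beta,\alpha]}F_{\comp(D(P))}$ I would start from the fundamental expansion just obtained. Writing $\ncr_\gamma=\sum_{\delta\coarsens\gamma}(-1)^{\ell(\gamma)-\ell(\delta)}\nch_\delta$ as in~\eqref{eq:ribbon-to-homogeneous-nsym} gives $\langle\Imm_\beta\ncr_\gamma,\dI_\alpha\rangle=\sum_{\delta\coarsens\gamma}(-1)^{\ell(\gamma)-\ell(\delta)}\langle\Imm_\beta\nch_\delta,\dI_\alpha\rangle$, and by iterating the multiplicity-free right Pieri rule $\Imm_\epsilon\nch_s=\sum_{\epsilon\subset_s\epsilon'}\Imm_{\epsilon'}$ of~\cite[Theorem 3.5]{BBSSZ2014}, the coefficient $\langle\Imm_\beta\nch_\delta,\dI_\alpha\rangle$ counts saturated chains $\beta=\epsilon^{(0)}\subset_{\delta_1}\epsilon^{(1)}\subset_{\delta_2}\cdots\subset_{\delta_{\ell(\delta)}}\epsilon^{(\ell(\delta))}=\alpha$ in $\mathfrak{P}$, that is, ways of cutting a path $P=[\beta,\alpha]$ into consecutive horizontal strips of sizes $\delta_1,\dots,\delta_{\ell(\delta)}$. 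Grouping these chains by the underlying path $P$, the alternating sum over $\delta\coarsens\gamma$ telescopes so that only the forced cuts, namely the descents of $P$, survive, and the coefficient of $F_\gamma$ is exactly the number of paths $P=[\beta,\alpha]$ with $\comp(D(P))=\gamma$. This is \cite[Propositions 3.47 and 3.48]{BBSSZ2014}, which I would cite; the step I expect to be the main obstacle, if one reproves it from scratch, is bookkeeping the restriction that each $\nch_s$-Pieri step is a horizontal $s$-strip adding \emph{at most one new row} (the condition $\ell(\epsilon')\le\ell(\epsilon)+1$ in $\epsilon\subset_s\epsilon'$), so that this genuine constraint on admissible cuts does not disturb the telescoping.

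For the last equality $\sum_{P=[\beta,\alpha]}F_{\comp(D(P))}=\sum_T F_{\comp(\Des_{\dI}(T))}$, the sum on the right being over standard skew immaculate tableaux $T$ of shape $\alpha/\beta$, I would invoke the correspondence $P\mapsto T$ described before Example~\ref{ex:StandardSkew}: for a path $P$ of length $k$ with successive row indices $m_1,\dots,m_k$, label the rightmost unlabelled cell of row $m_i$ with $k-i+1$. One checks directly that the result is a standard skew immaculate tableau of shape $\alpha/\beta$ --- rows strictly increase because within a fixed row cells are labelled right to left with decreasing values, and the first column strictly increases because a leftmost skew cell can be removed only when its row is the current topmost row, so these cells are vacated from the top downward --- and that $P\mapsto T$ is a bijection, its inverse repeatedly removing the cell carrying the current largest label. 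Finally $i$ is a descent of $P$ precisely when $m_i>m_{i+1}$, i.e.\ precisely when the cell labelled $k-i+1$ sits strictly above the cell labelled $k-i$, so $D(P)=\Des_{\dI}(T)$; the two sums therefore agree term by term, which completes the proof.
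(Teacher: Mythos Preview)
The paper does not give its own proof of this proposition; it is stated with a citation to \cite[Propositions~3.47 and~3.48]{BBSSZ2014}. Your sketch is correct and self-contained, and in fact matches the style of argument in \cite{BBSSZ2014}. The first two equalities via the skewing operator $\Imm_\beta^\perp(\dI_\alpha)$ are standard duality; the path--tableau bijection and the check that $D(P)=\Des_{\dI}(T)$ are exactly as you describe.

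Your one expressed worry, about the constraint $\ell(\epsilon')\le\ell(\epsilon)+1$ in $\epsilon\subset_s\epsilon'$ possibly disrupting the telescoping, is unfounded. The point is that horizontal $s$-strip paths in $\mathfrak{P}$ (segments with weakly increasing edge labels) are in bijection with relations $\epsilon\subset_s\epsilon'$: along a path with weakly increasing labels, a row can be emptied only at the final step and only if it is the current top row, so the length drops by at most one; conversely any $\epsilon\subset_s\epsilon'$ yields a unique such path by removing boxes row by row from bottom to top. Hence $\langle\Imm_\beta\nch_\delta,\dI_\alpha\rangle$ counts exactly the pairs $(P,\text{cutting})$ with $P$ a path from $\alpha$ to $\beta$ and the cutting having block sizes $\delta$ and cut set containing $D(P)$, with no further restriction. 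The inclusion--exclusion over $\delta\coarsens\gamma$ then collapses cleanly to the number of paths with $\comp(D(P))=\gamma$. (Minor terminological slip: these are not ``saturated'' chains; they are chains of $\subset_{\delta_i}$ relations.)
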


Note that the number of standard skew immaculate tableaux  $T$ of shape $\alpha/\beta$ with $\comp(\Des_{\dI}(T)) =\gamma$ is $\langle \Imm_\beta \ncr_\gamma,\dI_\alpha\rangle$.  

\begin{definition}\label{def:skew-row-strict-fn}
For $\{\gamma: \beta\subseteq \gamma \subseteq \alpha\}$ an interval in~$\mathfrak{P}$, define the \emph{skew row-strict dual immaculate function} to be \[\RI_{\alpha/\beta}=\sum_{\gamma}\langle\rI_{\beta}\nch_{\gamma},\RI_{\alpha}\rangle M_{\gamma}.\]
\end{definition}

We now quickly obtain the following.
\begin{theorem}\label{the:skewRI}
For $\{\gamma: \beta\subseteq \gamma \subseteq \alpha\}$ an interval in~$\mathfrak{P}$,  
\begin{align}
\RI_{\alpha/\beta}
& =\sum_{\gamma}\langle\rI_{\beta}\ncr_{\gamma},\RI_{\alpha}\rangle F_{\gamma} \\
&=\psi(\dI_{\alpha/\beta}) \\
& =\sum_{\gamma}\langle\rI_{\beta}\rI_{\gamma},\RI_{\alpha}\rangle \RI_{\gamma}\\
&= \sheilaFeb{\sum_{P=[\beta,\alpha]\in \mathfrak{P}}F_{\comp(A(P))}
=\sum_{\substack {T \text{ a standard skew immaculate} 
                              \\ \text{ tableau of shape  }  \alpha/\beta}} 
F_{\comp(\Des_{\rdI}(T))}.}
\end{align}
\end{theorem}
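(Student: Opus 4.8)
The plan is to mirror the proof of Proposition~\ref{prop:skewdI} but transport everything through the involution $\psi$, using the machinery already assembled in Section~\ref{sec:RSDI}. The key inputs are: the invariance of the $\QSym$--$\NSym$ pairing under $\psi$ (Proposition~\ref{prop:invariance-of-pairing-under-psi}), the identities $\psi(\dI_\alpha)=\RI_\alpha$ and $\psi(\Imm_\alpha)=\rI_\alpha$, the fact that $\psi(\nch_\gamma)=\nce_\gamma$ and $\psi(\ncr_\gamma)=\ncr_{\gamma^c}$, and the combinatorial fact recorded before Example~\ref{ex:StandardSkew} that for a standard skew immaculate tableau $T$ one has $\Des_{\dI}(T)^c=\Des_{\rdI}(T)$, equivalently $D(P)^c = A(P)$ for the corresponding path $P$.

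First I would establish the line $\RI_{\alpha/\beta}=\psi(\dI_{\alpha/\beta})$. Starting from Definition~\ref{def:skew-row-strict-fn}, $\RI_{\alpha/\beta}=\sum_\gamma \langle \rI_\beta \nch_\gamma,\RI_\alpha\rangle M_\gamma$; apply Proposition~\ref{prop:invariance-of-pairing-under-psi} to rewrite each coefficient as $\langle \psi(\rI_\beta\nch_\gamma),\psi(\RI_\alpha)\rangle = \langle \psi(\rI_\beta)\psi(\nch_\gamma),\dI_\alpha\rangle$ (using that $\psi$ is an algebra automorphism on $\NSym$) $=\langle \Imm_\beta \nch_\gamma,\dI_\alpha\rangle$, since $\psi(\rI_\beta)=\Imm_\beta$ (as $\psi$ is an involution) and $\psi(\nch_\gamma)=\nce_\gamma$ --- wait, here one must be careful: $\psi(\nch_\gamma)=\nce_\gamma$, not $\nch_\gamma$, so the coefficient is $\langle \Imm_\beta\nce_\gamma,\dI_\alpha\rangle$. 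To match Definition~\ref{def:BBSSZ2014-skewdI-tab} I instead apply $\psi$ directly: $\psi(\dI_{\alpha/\beta})=\sum_\gamma \langle \Imm_\beta\nch_\gamma,\dI_\alpha\rangle \psi(M_\gamma)$, and one checks $\psi(M_\gamma)$ expands appropriately; the cleanest route is actually to pass through the fundamental expansion. So the better order is: first prove the fundamental-basis line $\RI_{\alpha/\beta}=\sum_\gamma\langle\rI_\beta\ncr_\gamma,\RI_\alpha\rangle F_\gamma$ by converting $M$'s to $F$'s via \eqref{eq:FtoM} and reindexing (exactly as in \cite{BBSSZ2014} for the unstarred case), then apply $\psi$ to Proposition~\ref{prop:skewdI}: $\psi(\dI_{\alpha/\beta})=\sum_\gamma\langle\Imm_\beta\ncr_\gamma,\dI_\alpha\rangle\psi(F_\gamma)=\sum_\gamma\langle\Imm_\beta\ncr_\gamma,\dI_\alpha\rangle F_{\gamma^c}$, and reindex $\gamma\mapsto\gamma^c$ together with $\langle\Imm_\beta\ncr_\gamma,\dI_\alpha\rangle=\langle\psi(\Imm_\beta\ncr_\gamma),\RI_\alpha\rangle=\langle\rI_\beta\ncr_{\gamma^c},\RI_\alpha\rangle$ via Proposition~\ref{prop:invariance-of-pairing-under-psi}, identifying the result with the first line of the theorem, hence with $\RI_{\alpha/\beta}$.

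Next, the $\RI$-basis line $\sum_\gamma\langle\rI_\beta\rI_\gamma,\RI_\alpha\rangle\RI_\gamma$ follows the same template: apply $\psi$ to the second line of Proposition~\ref{prop:skewdI}, $\psi(\dI_{\alpha/\beta})=\sum_\gamma\langle\Imm_\beta\Imm_\gamma,\dI_\alpha\rangle\psi(\dI_\gamma)=\sum_\gamma\langle\Imm_\beta\Imm_\gamma,\dI_\alpha\rangle\RI_\gamma$, then rewrite the coefficient using pairing invariance and $\psi(\Imm_\gamma)=\rI_\gamma$. Finally, for the last line: by Proposition~\ref{prop:skewdI}, $\dI_{\alpha/\beta}=\sum_{P=[\beta,\alpha]}F_{\comp(D(P))}=\sum_T F_{\comp(\Des_{\dI}(T))}$ over standard skew immaculate tableaux $T$ of shape $\alpha/\beta$; applying $\psi$ and \eqref{eqn:psiF} turns each $F_{\comp(D(P))}$ into $F_{\comp(D(P))^c}$, and since $D(P)\subseteq\{1,\ldots,k-1\}$ with $k$ the length of the path, $\comp(D(P))^c=\comp(D(P)^c)=\comp(A(P))$; likewise $\comp(\Des_{\dI}(T))^c=\comp(\Des_{\rdI}(T))$ by the complementation fact $\Des_{\dI}(T)^c=\Des_{\rdI}(T)$ noted before Example~\ref{ex:StandardSkew}. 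This gives $\psi(\dI_{\alpha/\beta})=\sum_{P}F_{\comp(A(P))}=\sum_T F_{\comp(\Des_{\rdI}(T))}$, completing the chain of equalities.

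The only real subtlety --- the place to be careful rather than a genuine obstacle --- is bookkeeping the complement operation: one must check that set-complementation inside $\{1,\ldots,n-1\}$ (where $n=|\alpha|-|\beta|$ is the size of the skew shape, so the entries run $1,\ldots,n$) commutes with the $\gamma\mapsto\gamma^c$ reindexing in the sums, and that the interval structure $\{\gamma:\beta\subseteq\gamma\subseteq\alpha\}$ is exactly what is needed so that each such $\gamma$ and each path/tableau occurs once. Both of these are already handled implicitly in \cite{BBSSZ2014} for the unstarred case and in the combinatorial remark preceding Example~\ref{ex:StandardSkew}, so once the $\psi$-transport is set up the verification is routine. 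I would therefore present the proof as: (1) the fundamental-basis formula for $\RI_{\alpha/\beta}$, (2) $\RI_{\alpha/\beta}=\psi(\dI_{\alpha/\beta})$ as a corollary of (1) and Proposition~\ref{prop:skewdI}, (3) the $\RI$-expansion by applying $\psi$, and (4) the tableaux/path formula by applying $\psi$ together with the descent-complement identity.
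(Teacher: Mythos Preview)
Your proposal is correct and follows essentially the same route as the paper: first derive the fundamental-basis expansion of $\RI_{\alpha/\beta}$ from Definition~\ref{def:skew-row-strict-fn} by passing from the $\nch$ basis to the ribbon basis and from the monomial to the fundamental basis, then identify this with $\psi(\dI_{\alpha/\beta})$ by applying $\psi$ to Proposition~\ref{prop:skewdI} and using pairing invariance and the complement reindexing $\gamma\mapsto\gamma^c$, and finally obtain the remaining lines by applying $\psi$ to the corresponding lines of Proposition~\ref{prop:skewdI} together with $D(P)^c=A(P)$. Your initial attempt to match the two monomial definitions directly runs into exactly the obstruction you flag (namely $\psi(\nch_\gamma)=\nce_\gamma$), and the paper avoids it in the same way you ultimately do, by working in the fundamental basis.
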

\begin{proof} The first equality is immediate from Definition~\ref{def:skew-row-strict-fn} by using~\eqref{eq:ribbon-to-homogeneous-nsym} to expand  $\nch_\gamma$ in terms of the ribbon basis, interchanging the order of summation, and finally using~\eqref{eq:FtoM}:
\[\RI_{\alpha/\beta}= \sum_\gamma \langle\rI_{\beta} \sum_{\tau \succcurlyeq \gamma} \ncr_{\tau},\RI_{\alpha}\rangle M_{\gamma}
=\sum_\tau \langle\rI_{\beta}  \ncr_{\tau},\RI_{\alpha}\rangle \left(\sum_{ \gamma  \preccurlyeq \tau }M_{\gamma}\right)
=\sum_\tau \langle\rI_{\beta}  \ncr_{\tau},\RI_{\alpha}\rangle F_{\tau}.\]
The second line then follows by applying $\psi$ to the first equality in Proposition~\ref{prop:skewdI}, and using the invariance of the pairing under $\psi$, which gives 
\[\psi(\dI_{\alpha/\beta})=\sum_\gamma \langle \psi(\Imm_\beta)\ \psi(\ncr_\gamma),  \psi(\dI_\alpha)\rangle\ \psi(F_\gamma)
=\sum_\gamma \langle \RI_\beta\ \ncr_{\gamma^c}, \RI_\alpha\rangle\ F_{\gamma^c},
\]
where we have used~\eqref{eqn:psidualimm}, \eqref{eqn:psinsym} and~\eqref{eqn:psiF}.
The last two lines are now immediate by applying $\psi$ to the last two equations in Proposition~\ref{prop:skewdI}, since $A(P)$ and $D(P)$ are complementary by definition, \sheilaFeb{and each path $P$ from $\beta$ to $\alpha$ corresponds to
a unique standard skew immaculate tableau $T$ of shape $\alpha/\beta$.}
\end{proof}

 \begin{definition}\label{def:skewimmtab}
Let $\alpha$ and $\beta$ be compositions with $\beta \subseteq \alpha$.  Then a filling $T$ of the diagram of $\alpha/\beta$ with positive integers is a {\em skew immaculate tableau} provided
\begin{enumerate}
\item the entries in the first column of $\alpha$ (if any remain in $\alpha/\beta$) are strictly increasing from bottom to top, and 
\item rows weakly increase from left to right.  
\end{enumerate}
Similarly, $T$ is a {\em skew row-strict immaculate tableau} if 
\begin{enumerate}
\item the entries in the first column of $\alpha$ (if any remain in $\alpha/\beta$) are weakly increasing from bottom to top, and 
\item rows strictly increase from left to right.  
\end{enumerate}
\end{definition}
We now have the needed interpretation of the coefficients in Definitions~\ref{def:BBSSZ2014-skewdI-tab} and~\ref{def:skew-row-strict-fn} to rewrite $\dI_{\alpha/\beta}$ and $\rdI_{\alpha/\beta}$ as generating functions of skew immaculate tableaux.  
\begin{theorem}\label{thm:skew-semistandard-dualImm} Let $\alpha$ and $\beta$ be compositions with $\beta \subseteq \alpha$.  Then 
\[\dI_{\alpha/\beta} = \sum_{T}x^T\]
where the sum is over all skew immaculate tableaux of shape $\alpha/\beta$, and 
\[\rdI_{\alpha/\beta} = \sum_{T} x^T\]
where the sum is over all skew row-strict immaculate tableaux of shape $\alpha/\beta$.  
\end{theorem}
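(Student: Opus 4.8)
The plan is to derive both identities from the fundamental-basis expansions already established, rather than from the defining inner products directly: Proposition~\ref{prop:skewdI} gives $\dI_{\alpha/\beta}=\sum_S F_{\comp(\Des_{\dI}(S))}$ and Theorem~\ref{the:skewRI} gives $\rdI_{\alpha/\beta}=\sum_S F_{\comp(\Des_{\rdI}(S))}$, in each case the sum running over standard skew immaculate tableaux $S$ of shape $\alpha/\beta$. From these expansions the result follows by a standardization/destandardization bijection that is the skew analogue of the argument proving Theorem~\ref{thm:rsfunddecomp}; one only has to fix the appropriate reading word for skew shapes --- for $\dI_{\alpha/\beta}$, read the cells of a skew immaculate tableau from left to right starting with the top row, exactly as in the straight-shape case, and for $\rdI_{\alpha/\beta}$, use the row-strict immaculate reading word of Definition~\ref{def:rsreadingword} restricted to the cells of $\alpha/\beta$.

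For $\dI_{\alpha/\beta}$ I would argue as follows. First, standardizing a skew immaculate tableau $U$ of shape $\alpha/\beta$ (replace the $1$'s by $1,2,\dots$ in reading order, then the $2$'s, and so on) yields a standard skew immaculate tableau $S$ in the sense of Definition~\ref{def:skewimmtab}: within a row equal entries receive consecutive labels increasing left to right, so rows of $S$ increase left to right, and the surviving first-column entries of $U$ are already distinct and strictly increasing from bottom to top, so standardization preserves that. Conversely, given a standard skew immaculate tableau $S$ and a monomial $x_{i_1}\cdots x_{i_n}$ appearing in $F_{\comp(\Des_{\dI}(S))}$ (so $i_1\le\cdots\le i_n$ and $i_k<i_{k+1}$ whenever $k\in\Des_{\dI}(S)$), replace each entry $k$ of $S$ by $i_k$. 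When $k$ and $k+1$ sit in the same row of $S$ one has $k\notin\Des_{\dI}(S)$, so equal values are permitted and the row stays weakly increasing; when $k+1$ sits strictly above $k$, in particular when both lie in the surviving first column, one has $k\in\Des_{\dI}(S)$, forcing $i_k<i_{k+1}$ and keeping the first column strictly increasing. These two maps are mutually inverse and content-preserving, so summing $x^T$ over skew immaculate tableaux of shape $\alpha/\beta$ reproduces the fundamental expansion of $\dI_{\alpha/\beta}$.

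For $\rdI_{\alpha/\beta}$ I would run the identical argument with $\Des_{\rdI}$ (Definition~\ref{def:rsdesset}), the row-strict reading word, and the notion of skew row-strict immaculate tableau from Definition~\ref{def:skewimmtab}; alternatively one may apply $\psi$ to the first identity, using $\psi(\dI_{\alpha/\beta})=\rdI_{\alpha/\beta}$ from Theorem~\ref{the:skewRI} together with $\Des_{\dI}(S)^c=\Des_{\rdI}(S)$, which transports the standard-tableau bookkeeping for free. The only step needing genuine care --- and the main obstacle --- is checking that destandardization never violates the first-column condition on a skew shape. This reduces to the observation that in every standard skew immaculate tableau the surviving first-column cells (which occupy the top rows $\ell(\beta)+1,\dots,\ell(\alpha)$) are strictly increasing from bottom to top, so any consecutive pair $k,k+1$ lying in that column has $k+1$ strictly above $k$; this registers as a $\Des_{\dI}$-descent in the dual immaculate case (forcing strict increase upon destandardization) and as a $\Des_{\rdI}$-nondescent in the row-strict case (allowing the weak increase required there), which is exactly what is needed.
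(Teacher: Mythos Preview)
Your approach is correct but genuinely different from the paper's. The paper works directly from the monomial-basis definitions (Definitions~\ref{def:BBSSZ2014-skewdI-tab} and~\ref{def:skew-row-strict-fn}): iterating the Pieri rule of Theorem~\ref{thm:psiresults}, Point~\eqref{thm:pieri2}, shows that $\langle\rI_{\beta}\nch_{\gamma},\RI_{\alpha}\rangle$ counts the ways to build $\alpha$ from $\beta$ by a sequence of vertical strips of sizes $\gamma_1,\ldots,\gamma_k$, which is the number of skew row-strict immaculate tableaux of shape $\alpha/\beta$ and content $\gamma$ (and analogously for $\dI$ with horizontal strips). You instead start from the already-established fundamental expansions in Proposition~\ref{prop:skewdI} and Theorem~\ref{the:skewRI} and run the standardization/destandardization bijection, the skew analogue of Theorem~\ref{thm:rsfunddecomp}. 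The paper's route is shorter and makes the Pieri-rule origin of the coefficients transparent; yours is self-contained combinatorially and avoids invoking the multiplicative structure of $\NSym$ altogether.

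One point to tighten: your verification that destandardization keeps the surviving first column strictly increasing in the $\dI_{\alpha/\beta}$ case only treats the situation where adjacent first-column cells carry consecutive labels $k,k+1$. In general, adjacent surviving first-column cells of a standard skew immaculate tableau $S$ carry some $a<b$ with $b$ not necessarily equal to $a+1$ (e.g.\ shape $(2,1,1)/(1)$ with entries $2,1,3$ in rows $1,2,3$). You still get $i_a<i_b$, but by transitivity: if every index $a,a+1,\ldots,b-1$ lay outside $\Des_{\dI}(S)$ then each of $a+1,\ldots,b$ would sit weakly below its predecessor, forcing $b$ weakly below $a$ and contradicting the strict column increase in $S$. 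With that step made explicit your argument is complete.
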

\begin{proof} \svw{By Point \eqref{thm:pieri2}} in Theorem~\ref{thm:psiresults}, we know that for $\gamma=\gamma_1\gamma_2\svw{\cdots}\gamma_k$, $\alpha$ can be obtained from $\beta$ by a series of vertical strips of lengths $\gamma_1,\gamma_2,\dots,\gamma_k$. Thus the coefficient $\langle\rI_{\beta}\nch_{\gamma},\RI_{\alpha}\rangle$ represents the number of ways to add a sequence of vertical strips of lengths $\gamma_1,\gamma_2,\dots,\gamma_k$ from $\beta$ to $\alpha$, which counts the number of skew immaculate tableaux $T$ of shape $\alpha/\beta$ such that the descent composition of $T$ is coarser than $\gamma$, since adding a vertical strip after another one may or may not create a descent. \sheila{See Example~\ref{ex:for-thm-skew-semistandard} below.} Thus
\[ \langle \mathfrak{S}_\beta \mathbf{h}_\gamma, \dI_\alpha\rangle \]
is the number of skew immaculate tableaux of shape $\alpha/\beta$ of content $\gamma$ and 
\[\langle\rI_{\beta}\nch_{\gamma},\RI_{\alpha}\rangle \]
is the number of skew row-strict immaculate tableaux of shape $\alpha/\beta$ 
of content $\gamma$. 
The result now follows immediately from the definitions.
\end{proof}

\begin{example}\label{ex:for-thm-skew-semistandard}
Consider 
\[T=\tableau{1&4\\&3\\&2} \]
and corresponding path \[P= \svw{(2,2,2)\overset{3}\rightarrow(2,2,1)\overset{2}\rightarrow(2,1,1)\overset{1}\rightarrow (1,1,1) \overset{3}\rightarrow(1,1).}\]

 Note that $T$ can be considered to be formed from vertical strips corresponding to $\gamma = (1,3)$ or $(1,1,2)$, or $(1,2,1)$ or $(1,1,1,1)$ since $\comp(\Des_{\rdI}(T))=(1,3)$ and is coarser than the listed options for $\gamma.$ 
\end{example}

\sheila{In analogy with the well-known Schur function identity \cite[\svwMar{Chapter 1} Eqn. (5.9)]{MACD1995}, 
\begin{equation}\label{eqn:sheila-Schur-two-vars-1}s_\lambda(X,Y)= \sum_{\mu\subset \lambda}s_\mu (X)  s_{\lambda/\mu} (Y),\end{equation}
Theorem~\ref{thm:skew-semistandard-dualImm} immediately gives us the following:
\begin{theorem}
Suppose we have two sets of variables, $X$ and $Y,$ ordered so that the alphabet $X$  precedes the alphabet $Y$.  Then 
\begin{equation}\label{eqn:sheila-dI-two-vars}\dI_\alpha(X,Y)= \sum_{\beta\subset \alpha}\dI_\beta (X)  \dI_{\alpha/\beta} (Y),\end{equation}
and 
\begin{equation}\label{eqn:sheila-rdI-two-vars}\rdI_\alpha(X,Y)= \sum_{\beta\subset \alpha}\rdI_\beta (X)  \rdI_{\alpha/\beta} (Y).\end{equation}
\end{theorem}
}
\subsection{Hopf algebra approach}
We consider the Hopf algebra approach to defining skew dual immaculate functions and establish that it is equivalent to the previous definition.  To start, we provide a brief introduction to the necessary Hopf algebra background.  

\svw{We have that} $\NSym$ and $\QSym$ form dual Hopf algebras using the pairing $\langle \cdot,\cdot \rangle: \NSym \otimes \QSym \rightarrow \mathbb{Q}$ defined by $\langle \nch_\alpha, M_\beta\rangle = \delta_{\alpha\beta}$ where $\delta_{\alpha\beta} = 1 $ if $\alpha = \beta$ and 0 otherwise.  

Given dual bases $\{B_i\}_{i \in I}$ and $\{D_i\}_{i\in I}$, 
\begin{align*}
B_i\cdot B_j = \sum_{k} b_{i,j}^k B_k &\quad \Leftrightarrow\quad  \Delta D_k = \sum_{i,j} b_{i,j}^k D_i \otimes D_j\\
D_i \cdot D_j =\sum_{k} d_{i,j}^k D_k &\quad \Leftrightarrow\quad  \Delta B_k = \sum_{i,j} d_{i,j}^k B_i \otimes B_j
\end{align*}
where $\cdot$ is the product and $\Delta$ is the coproduct. 

For the fundamental quasisymmetric functions, we have that 
\begin{equation}\label{eq:fundcoprod}
\Delta F_\alpha = \sum_{\substack{(\beta,\gamma) \text{ with }\\\beta\cdot \gamma = \alpha \text{ or}\\\beta\odot \gamma=\alpha}} F_\beta\otimes F_\gamma
\end{equation}
where for $\beta = (\beta_1,\ldots, \beta_k)$ and $\gamma = (\gamma_1,\ldots, \gamma_n)$, $\beta\cdot\gamma = (\beta_1,\ldots, \beta_k,\gamma_1,\ldots, \gamma_n)$ is the \svw{\emph{concatenation}} of $\beta$ and $\gamma$,  and $\beta\odot\gamma = (\beta_1, \ldots, \beta_{k-1},\beta_k+\gamma_1,\gamma_2,\ldots, \gamma_n)$ is the \svw{\emph{near-concatenation}} of $\beta$ and $\gamma$. 

Following \svw{\cite{BLvW2011},} we can define the coproduct \svw{$\Delta\I^*_\alpha$} in terms of skew elements $\widetilde{\dI_{\alpha/\gamma}}$.
\begin{definition}\label{def:skewdIhopf} Let $\alpha \vDash n$. \svwMar{Then we define}
\[\svw{\Delta\dI_\alpha}=\emn{\sum_{\gamma} \dI_\gamma\otimes\widetilde{\dI_{\alpha/\gamma}}}.\]
\end{definition}

We show that $\widetilde{\dI_{\alpha/\gamma}}=\dI_{\alpha/\gamma}$ as described in Proposition~\ref{prop:skewdI}.

\begin{lemma}\label{lem:skewdi}
\[\widetilde{\dI_{\alpha/\gamma}}=\dI_{\alpha/\gamma}=\sum_{T}F_{\comp(\Des_{\dI}(T))}\]
where the sum is over all \svw{standard skew} immaculate tableaux $T$ of shape $\alpha/\gamma$.
\end{lemma}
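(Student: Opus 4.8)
The plan is to show that the coproduct coefficients appearing in Definition~\ref{def:skewdIhopf}, namely the quantities $\langle \dI_\gamma \otimes \widetilde{\dI_{\alpha/\gamma}}, \cdot\, \rangle$ read off against a dual basis of $\NSym\otimes\NSym$, agree with the coefficients already identified in Proposition~\ref{prop:skewdI}. Concretely, the Hopf-algebraic definition says $\widetilde{\dI_{\alpha/\gamma}}$ is characterized by $\langle \Imm_\delta, \widetilde{\dI_{\alpha/\gamma}}\rangle = \langle \Imm_\gamma\Imm_\delta, \dI_\alpha\rangle$ for all compositions $\delta$, since the product $\Imm_\gamma\Imm_\delta$ on the $\NSym$ side is dual to the coproduct on the $\QSym$ side with respect to the pairing $\langle \Imm_\alpha,\dI_\beta\rangle = \delta_{\alpha\beta}$. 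Thus the proof reduces to comparing this with the expansion $\dI_{\alpha/\beta} = \sum_\gamma \langle \Imm_\beta\Imm_\gamma,\dI_\alpha\rangle \dI_\gamma$ from Proposition~\ref{prop:skewdI}, which immediately gives $\langle \Imm_\delta, \dI_{\alpha/\gamma}\rangle = \langle \Imm_\gamma\Imm_\delta,\dI_\alpha\rangle$. Since a quasisymmetric function is determined by its pairings against the immaculate basis, the two skew objects coincide.

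First I would make precise the duality between product and coproduct: for graded dual Hopf algebras with dual bases $\{\Imm_\alpha\}$ and $\{\dI_\alpha\}$, one has $\langle \Imm_\alpha\Imm_\beta, \dI_\gamma\rangle = \langle \Imm_\alpha\otimes\Imm_\beta, \Delta\dI_\gamma\rangle$. Then I would substitute the defining formula $\Delta\dI_\alpha = \sum_\gamma \dI_\gamma\otimes\widetilde{\dI_{\alpha/\gamma}}$ and pair both sides with $\Imm_\gamma\otimes\Imm_\delta$; orthogonality of $\{\Imm\}$ and $\{\dI\}$ collapses the first tensor factor, leaving $\langle \Imm_\gamma\Imm_\delta,\dI_\alpha\rangle = \langle \Imm_\delta, \widetilde{\dI_{\alpha/\gamma}}\rangle$. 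Next I would invoke the dual-immaculate expansion in Proposition~\ref{prop:skewdI}, pair it with $\Imm_\delta$, and obtain $\langle \Imm_\delta, \dI_{\alpha/\gamma}\rangle = \langle \Imm_\gamma\Imm_\delta, \dI_\alpha\rangle$. Comparing the two displays, $\langle \Imm_\delta, \widetilde{\dI_{\alpha/\gamma}}\rangle = \langle \Imm_\delta, \dI_{\alpha/\gamma}\rangle$ for every $\delta$, and since $\{\Imm_\delta\}$ is a basis of $\NSym$ the pairing is nondegenerate, forcing $\widetilde{\dI_{\alpha/\gamma}} = \dI_{\alpha/\gamma}$. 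The final fundamental-basis expansion is then just the last line of Proposition~\ref{prop:skewdI}.

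The main obstacle is bookkeeping rather than conceptual: one must be careful that the indexing set $\{\gamma : \beta\subseteq\gamma\subseteq\alpha\}$ of Proposition~\ref{prop:skewdI} matches the range of $\gamma$ in Definition~\ref{def:skewdIhopf} (outside that interval the coefficient $\langle\Imm_\gamma\Imm_\delta,\dI_\alpha\rangle$ vanishes, so both formulas are supported on the same interval), and that the Pieri/creation-operator structure guarantees $\langle\Imm_\gamma\Imm_\delta,\dI_\alpha\rangle$ is nonzero only when $\gamma\subseteq\alpha$ — this is exactly the content that makes $\widetilde{\dI_{\alpha/\gamma}}$ a genuine skew object supported on shape $\alpha/\gamma$. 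I would also explicitly note that $\dI_{\alpha/\gamma}$ is honestly an element of $\QSym$ (a finite sum of fundamentals indexed by standard skew immaculate tableaux), so the identification is between two bona fide quasisymmetric functions, and the displayed fundamental expansion is inherited verbatim from Proposition~\ref{prop:skewdI}.
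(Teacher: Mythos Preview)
Your argument is correct and genuinely different from the paper's. You work purely algebraically: using the duality $\langle \Imm_\gamma\Imm_\delta,\dI_\alpha\rangle=\langle \Imm_\gamma\otimes\Imm_\delta,\Delta\dI_\alpha\rangle$ together with the dual-immaculate expansion in Proposition~\ref{prop:skewdI}, you identify both $\widetilde{\dI_{\alpha/\gamma}}$ and $\dI_{\alpha/\gamma}$ by their pairings against every $\Imm_\delta$, and nondegeneracy finishes the job. The paper instead gives a direct combinatorial argument: it applies $\Delta$ to the fundamental expansion $\dI_\alpha=\sum_T F_{\comp(\Des_{\dI}(T))}$, uses the explicit coproduct formula~\eqref{eq:fundcoprod} for $F_\alpha$, and then matches terms via the bijection $T\leftrightarrow(\Omega_{n-k}(T),\mho_k(T))$ that splits a standard immaculate tableau into a straight-shape piece and a skew piece with compatible descent sets. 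Your route is shorter and shows the result is a formal consequence of Hopf duality once Proposition~\ref{prop:skewdI} is available; the paper's route is self-contained at the tableau level and exhibits an explicit bijection explaining \emph{why} the fundamental terms in the coproduct regroup along skew shapes, which is the substantive combinatorial content underlying Proposition~\ref{prop:skewdI} itself. One small remark: the phrase ``indexing set $\{\gamma:\beta\subseteq\gamma\subseteq\alpha\}$'' in Proposition~\ref{prop:skewdI} describes the hypothesis that $\beta\subseteq\alpha$, not the range of the summation variable $\gamma$; both sums run over all compositions, with the coefficients vanishing outside the interval, so your bookkeeping concern is in fact moot.
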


\begin{proof}
We use the technique of~\cite[Proposition 3.1]{BLvW2011}. Let $T$ be a \svw{standard skew} immaculate tableaux such that $|T|=n$. For any $k$ with $0\leq k\leq n$, let $\mho_k(T)$ be the standardization of the skew tableaux consisting of cells of $T$ with entries $\{n-k+1,\svw{\ldots,} n\}$. Also let $\Omega_k(T)$ be the skew tableaux consisting of the cells of $T$ after removing the entries $\{k+1,\svw{\ldots,} n\}$ as in Figure~\ref{fig:omskew}.

\begin{figure}
\[ T=\tableau{4&5&8\\*&*&6&7\\*&*&2&3\\*&1&9}\emn{\quad \Omega_4(T) = \tableau{4\\*&*\\*&*&2&3\\*&1}\quad \mho_5(T)=\tableau{*&1&4\\*&*&2&3\\*&*&*&*\\*&*&5} } \]
\caption{An example of \emn{$\Omega_{n-k}(T)$ and $\mho_k(T)$}.}\label{fig:omskew}
\end{figure}

Note that if $T$ is a standard immaculate tableau of shape $\alpha$, then $T=\Omega_{n-k}(T)\cup(\mho_k(T)+(n-k))$ where $\mho_k(T)+(n-k)$ is $\mho_k(T)$ with $n-k$ added to each entry.  Suppose $\Des_{\dI}(T) = \alpha$ with $|\alpha|=n$.  Then we can rewrite~\eqref{eq:fundcoprod} as 
\[\Delta F_\alpha = \sum_{i=0}^n F_{\beta_i}\otimes F_{\gamma_i}\] 
where $|\beta_i|=\svw{n-i}$, $|\gamma_i|=\svw{i}$, and either $\beta_i\cdot\gamma_i=\alpha$ or $\beta_i\odot\gamma_i=\alpha$. \emn{Observe} that $\beta_i =\comp( \Des_{\dI}(\emn{\Omega_{n-i}}(T)))$ and $\gamma_i = \comp(\Des_{\dI}(\emn{\mho_{i}}(T)))$.

Then
\begin{align*}
\Delta\dI_\alpha
&=\Delta\left(\sum_T F_{\comp(\Des_{\dI}(T))}\right)\\
&=\sum_{T }\Delta F_{\comp(\Des_{\dI}(T))} \\
&=\sum_{T}\sum^n_{ i=0}F_{\beta_i}\otimes F_{\gamma_i}
\end{align*}
where $T$ is a standard \svw{immaculate} tableau of shape $\alpha$.  

Further, by Definition~\ref{def:skewdIhopf} we have
\begin{align*}
\Delta \dI_\alpha &=
\sum_\delta \emn{\dI_\delta\otimes \widetilde{\dI_{\alpha/\delta}}}\\ & = \sum_{\delta}\emn{ \sum_S F_{\comp(\Des_{\dI}(S))}\otimes \widetilde{\dI_{\alpha/\delta}}  }
\end{align*} where $S$ is a standard immaculate tableau of shape $\delta$. 

For a fixed $S$ of shape $\delta$ with $|\delta|=n-k$ for some $k$, there exists a standard immaculate tableau $T$ of shape $\alpha$ such that $S=\Omega_{n-k}(T)$. Then $\mho_k(T)$ has shape $\alpha/\delta$.  Similarly, given a standard immaculate tableau $T$ of shape $\alpha$, $T=\emn{\Omega_{n-k}(T)\cup(\mho_{k}(T)+(n-k))}$ where $\Omega_{n-k}(T)$ has shape $\delta$ with $|\delta|=n-k$ and $\mho_{k}(T)$ has shape $\alpha/\delta$. Thus 
\[\widetilde{\dI_{\alpha/\delta}} = \sum_{T} F_{\comp(\Des_{\dI}(T))}=\dI_{\alpha/\delta}\]
where $T$ is a \svw{standard skew} immaculate tableau of shape $\alpha/\delta$.
\end{proof}
\svwMar{By the above lemma and Theorem~\ref{the:skewRI}, we have the following.}
\svwMar{\begin{theorem}\label{the:coproduct} Let $\alpha$ be a composition. Then 
\[\Delta \dI_\alpha = \sum_{\beta}  {\dI_\beta\otimes\dI_{\alpha/\beta}}
\quad \text{ and }\quad 
  \Delta \rdI_\alpha = \sum_{\beta}  {\rdI_\beta\otimes\rdI_{\alpha/\beta}}.\]  
\end{theorem}}
\subsection{Expansions of skew Schur functions}
We can also use a Hopf algebra approach to establish skew versions of \svw{Point} \eqref{schurtodI} in Theorem~\ref{thm:psiresults}, \svw{from where we recall that for $\lambda$ a partition and $\sigma \in S_{\ell(\lambda)}$, define $\sigma(\lambda) =  (\lambda_{\sigma_1}+1-\sigma_1,\ldots, \lambda_{\sigma_k}+k-\sigma_k)$ provided $\lambda_{\sigma_i}+i-\sigma_i>0$ for each $i$.} 

\svw{Also} recall that $s_{\lambda/\mu}=\det(h_{\lambda_i-\mu_j-i+j})$. If we consider compositions $\alpha \subseteq \lambda$, we can define $s_{\lambda/\alpha} = \det(h_{\lambda_i - \alpha_j-i+j})$.  Note that if there exists some $\alpha_j - j=\alpha_k-k$ for some $j\neq k$, $s_{\lambda/\alpha}=0$ since two columns of the matrix will be equal.  If no such pair $j,k$ exists, then there exists a unique permutation $\tau$ such that 
\svw{$\tau(\alpha) = (\alpha_{\tau_1}+1-\tau_1,\ldots, \alpha_{\tau_k}+k-\tau_k)=\mu$} where $\mu$ is a partition.  In this case, 
\begin{equation}\label{eqn:skewschur}
s_{\lambda/\mu}=(-1)^{\svw{\sgn(\tau)}} s_{\lambda/\alpha}.
\end{equation}
\begin{theorem}\label{thm:skewschurexpansion}
Let $\lambda$ and $\mu$ be partitions with $\mu \subseteq \lambda$.  Then 
\[s_{\lambda/\mu} = \sum_{\sigma \in S_{\ell(\lambda)}} (-1)^{\sgn(\sigma)+\sgn(\tau)} \dI_{\sigma(\lambda)/\tau(\mu)}\] for any choice of $\tau$ such that $\tau(\mu)$ is a composition.
\end{theorem}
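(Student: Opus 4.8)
The plan is to lift the known non-skew expansion in Point~\eqref{schurtodI} of Theorem~\ref{thm:psiresults} to the skew setting by means of a coproduct argument, in exactly the spirit of how skew Schur functions are usually obtained from straight-shape ones. First I would recall the classical fact that the map $\mu \mapsto s_{\lambda/\mu}$ (or more precisely the functional $s_\mu^\perp$) extracts the second tensor factor of the coproduct: $\Delta s_\lambda = \sum_\mu s_\mu \otimes s_{\lambda/\mu}$, and since $\{\dI_\alpha\}$ is a basis of $\QSym$, the coproduct formula $\Delta \dI_\alpha = \sum_\gamma \dI_\gamma \otimes \dI_{\alpha/\gamma}$ from Definition~\ref{def:skewdIhopf} and Lemma~\ref{lem:skewdi} is available in $\QSym$. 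The strategy is then: apply $\Delta$ to the identity $s_\lambda = \sum_{\sigma \in S_{\ell(\lambda)}} (-1)^{\sgn(\sigma)} \dI_{\sigma(\lambda)}$ inside $\QSym$ (viewing $s_\lambda$ via the natural embedding $\Sym \hookrightarrow \QSym$), and compare the two tensor-factor expansions.

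The key steps, in order: (1) Embed everything in $\QSym$ and expand $\Delta s_\lambda$ two ways — once as $\sum_\mu s_\mu \otimes s_{\lambda/\mu}$ (this needs $\Sym \hookrightarrow \QSym$ to be a Hopf algebra map, which is standard), and once by applying $\Delta$ termwise to $\sum_\sigma (-1)^{\sgn(\sigma)} \dI_{\sigma(\lambda)}$, giving $\sum_\sigma (-1)^{\sgn(\sigma)} \sum_\delta \dI_\delta \otimes \dI_{\sigma(\lambda)/\delta}$. (2) On the first factor, re-expand each $s_\mu$ using Point~\eqref{schurtodI} again: $s_\mu = \sum_{\tau} (-1)^{\sgn(\tau)} \dI_{\tau(\mu)}$, so the $\mu$-sum becomes a sum over pairs $(\mu,\tau)$ with $\tau(\mu)$ a composition; equivalently, reindex by the composition $\alpha = \tau(\mu)$ ranging over those compositions admitting such a $\tau$, using~\eqref{eqn:skewschur} to track the sign $(-1)^{\sgn(\tau)}$. (3) Match coefficients of $\dI_\alpha$ in the first tensor slot on both sides; the first-slot index $\delta$ on the right is forced to equal $\alpha = \tau(\mu)$, which pins down $\mu$ (uniquely, when $\tau$ exists) as the partition with $\tau(\mu) = \delta$. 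This yields $s_{\lambda/\mu} = \sum_\sigma (-1)^{\sgn(\sigma)} \dI_{\sigma(\lambda)/\tau(\mu)}$, and then restoring the overall sign $(-1)^{\sgn(\tau)}$ from step (2) and noting $\tau$ can be chosen arbitrarily (changing $\tau$ by anything fixing $\tau(\mu)$ does not change the skew object, and the formula is stated "for any choice") gives the claimed identity.

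I expect the main obstacle to be bookkeeping the reindexing in step (2)–(3) cleanly: one must be careful that $\delta$ in the coproduct sum $\sum_\delta \dI_\delta \otimes \dI_{\sigma(\lambda)/\delta}$ ranges over \emph{all} compositions $\delta$, but only those $\delta$ of the form $\tau(\mu)$ for a genuine partition $\mu$ actually pair up with a nonzero $s_\mu$ term on the left; one needs that if $\delta$ is not of this form then either $\dI_{\sigma(\lambda)/\delta}$ contributions cancel in the alternating sum over $\sigma$, or — more likely the right framing — that matching coefficients of the specific $\dI_\alpha$ with $\alpha = \tau(\mu)$ on the left already isolates exactly the right sub-sum on the right without needing to worry about the other $\delta$. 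A secondary subtlety is verifying that the convention $\dI_{\sigma(\lambda)} = 0$ when $\sigma(\lambda)$ is not a valid composition is compatible with $\dI_{\sigma(\lambda)/\tau(\mu)} = 0$ in the same cases, so that the $\sigma$-sum on the right has the same support as claimed. I would also double-check the edge case $\tau(\mu)$ not contained in $\sigma(\lambda)$, where the skew function should vanish, consistent with the corresponding vanishing on the left. Once these indexing points are settled, the argument is a direct comparison of coproduct expansions and requires no new combinatorics beyond what is already in the excerpt.
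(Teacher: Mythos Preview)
Your proposal is correct and follows essentially the same route as the paper's own proof: both apply $\Delta$ to the non-skew identity of Theorem~\ref{thm:psiresults}\eqref{schurtodI}, expand $\Delta s_\lambda$ in two ways (once via $\sum_\mu s_\mu\otimes s_{\lambda/\mu}$ with $s_\mu$ re-expanded in dual immaculates, once via $\Delta\dI_{\sigma(\lambda)}=\sum_\beta \dI_\beta\otimes\dI_{\sigma(\lambda)/\beta}$), and then compare coefficients of $\dI_\beta$ in the first tensor slot. Your anticipated bookkeeping issue is handled exactly as you suggest in your ``more likely the right framing'' remark: for each fixed $\beta$ there is at most one $\tau$ with $\tau^{-1}(\beta)$ a partition, so matching the $\dI_\beta$-coefficient directly gives the claimed identity without needing any cancellation over the other $\delta$.
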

 
\begin{proof}
Recall that \svw{$\Delta(s_\lambda) = \sum_{\mu} s_{\lambda/\mu} \otimes s_{\mu} = \sum_{\mu} s_{\mu} \otimes s_{\lambda/\mu}$ because the Hopf algebra of symmetric functions is cocommutative.}  We can rewrite $\Delta(s_\lambda)$ using Theorem~\ref{thm:psiresults}, \svw{Point} \eqref{schurtodI}.  Then 
\begin{align*}
\Delta(s_\lambda) & = \Delta \left(\sum_{\sigma \in S_{\ell(\lambda)}} (-1)^{\sgn(\sigma)}\dI_{\sigma(\lambda)}\right)\\
&= \sum_{\sigma \in S_{\ell(\lambda)}}(-1)^{\sgn(\sigma)} \Delta \dI_{\sigma(\lambda)}\\
&= \sum_{\sigma \in S_{\ell(\lambda)}}(-1)^{\sgn(\sigma)} \left(\sum_{\beta} \emn{\dI_{\beta}\otimes\dI_{\sigma(\lambda)/\beta}}\right)\\
&= \sum_{\beta}\emn{\dI_{\beta} \otimes \left(\sum_{\sigma \in S_{\ell(\lambda)}} (-1)^{\sgn(\sigma)}\dI_{\sigma(\lambda)/\beta}\right)}.
\end{align*}

On the other hand, 
\begin{align*}
\sum_{\mu}\emn{s_{\mu}\otimes s_{\lambda/\mu}} &= \sum_\mu \emn{ \left(\sum_{\tau \in S_{\ell(\mu)}} (-1)^{\sgn(\tau)} \dI_{\tau(\mu)}\right)\otimes s_{\lambda/\mu}}\\
&=\sum_{\mu} \sum_{\tau \in S_{\ell(\mu)}} (-1)^{\sgn(\tau)} \left(\emn{\dI_{\tau(\mu)}\otimes s_{\lambda/\mu}}\right)\\
&=\svw{\sum_{\beta}}\emn{\dI_{\beta}\otimes \left(\sum_{\tau \in S_{\ell(\beta)}} (-1)^{\sgn(\tau)}s_{\lambda/\tau^{-1}(\beta)}\right)}\\
\end{align*}
where $\beta$ is a composition and $\tau^{-1}(\beta)$ is a partition.
Thus for a fixed choice of $\beta$,
\[\sum_{\sigma\in S_{\ell(\lambda)}}(-1)^{\sgn(\sigma)}\dI_{\sigma(\lambda)/\beta} = \sum_{\tau \in S_{\ell(\beta)}}(-1)^{\sgn(\tau)} s_{\lambda/\tau^{-1}(\beta)}.\]
Note that for each $\beta$, there is at most one $\tau\in S_{\ell(\beta)}$ such that $s_{\lambda/\tau^{-1}(\beta)}=s_{\lambda/\mu}\neq 0$ for a partition $\mu$.  Thus
\[s_{\lambda/\mu} = \sum_{\sigma \in S_{\ell(\lambda)}} (-1)^{\sgn(\sigma)+\sgn(\tau)} \dI_{\sigma(\lambda)/\tau(\mu)}\] for any valid choice of $\tau$.
\end{proof}

Choosing $\tau$ as the identity gives the following corollary.
\begin{corollary}\label{cor:schurexpdI}
For partitions $\lambda$ and $\mu$ with $\mu \subseteq \lambda$, 
\[s_{\lambda/\mu} = \sum_{\sigma\in S_{\ell(\lambda)}} (-1)^{\sgn(\sigma)}\dI_{\sigma(\lambda)/\mu}.\]
\end{corollary}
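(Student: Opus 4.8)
The plan is to obtain this immediately from Theorem~\ref{thm:skewschurexpansion} by making the simplest possible choice of the free parameter $\tau$. Theorem~\ref{thm:skewschurexpansion} asserts that for partitions $\mu \subseteq \lambda$,
\[
s_{\lambda/\mu} = \sum_{\sigma \in S_{\ell(\lambda)}} (-1)^{\sgn(\sigma)+\sgn(\tau)} \dI_{\sigma(\lambda)/\tau(\mu)}
\]
for \emph{any} choice of permutation $\tau$ (of the appropriate symmetric group) such that $\tau(\mu)$ is a composition. So the only thing to check is that $\tau = \mathrm{id}$ is a legitimate choice, and then to simplify the resulting formula.

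First I would verify that $\tau = \mathrm{id}$ satisfies the hypothesis of Theorem~\ref{thm:skewschurexpansion}, namely that $\mathrm{id}(\mu)$ is a composition. By the definition recalled just before the theorem, $\mathrm{id}(\mu) = (\mu_{1}+1-1,\ldots,\mu_{k}+k-k) = (\mu_1,\ldots,\mu_k) = \mu$, which is a partition and hence in particular a composition (the positivity condition $\mu_i + i - i > 0$ holds since $\mu$ is a partition with $\ell(\mu)=k$, so every part is positive). Thus $\tau=\mathrm{id}$ is an admissible choice. Second, substituting $\tau = \mathrm{id}$ into the formula: we have $\sgn(\mathrm{id}) = 0$ (writing the sign additively in the exponent, as the paper does), so $(-1)^{\sgn(\sigma)+\sgn(\mathrm{id})} = (-1)^{\sgn(\sigma)}$, and $\tau(\mu) = \mu$, giving exactly
\[
s_{\lambda/\mu} = \sum_{\sigma \in S_{\ell(\lambda)}} (-1)^{\sgn(\sigma)} \dI_{\sigma(\lambda)/\mu},
\]
which is the claimed identity.

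There is essentially no obstacle here; the entire content is in Theorem~\ref{thm:skewschurexpansion}, and the corollary is the specialization $\tau = \mathrm{id}$. The only point requiring a word of care is the bookkeeping convention for $(-1)^{\sgn(\sigma)}$ — the excerpt writes signs additively in exponents — but this is purely notational and matches the statement of the theorem. One could also sanity-check the degenerate case $\mu = \emptyset$, where the corollary must reduce to Theorem~\ref{thm:psiresults}, Point~\eqref{schurtodI}, namely $s_\lambda = \sum_{\sigma} (-1)^{\sgn(\sigma)} \dI_{\sigma(\lambda)}$; this is consistent since $\dI_{\sigma(\lambda)/\emptyset} = \dI_{\sigma(\lambda)}$.
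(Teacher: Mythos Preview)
Your proposal is correct and matches the paper's approach exactly: the paper's entire proof is the one-line remark that choosing $\tau$ to be the identity in Theorem~\ref{thm:skewschurexpansion} gives the corollary. Your additional verification that $\mathrm{id}(\mu)=\mu$ is a composition and the sign bookkeeping are fine elaborations of this specialization.
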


Applying $\psi$ to both sides of Theorem~\ref{thm:skewschurexpansion} gives us an expansion in terms of the row-strict dual immaculate functions. 

\begin{corollary}\label{cor:schurexprsdI}
For partitions $\lambda$ and $\mu$ with $\mu \subseteq\lambda$ and $\tau \in S_{\ell(\mu)}$ such that $\tau(\mu)$ is a composition, 
\[s_{\lambda'/\mu'} = \sum_{\sigma \in S_{\ell(\lambda)}} (-1)^{\sgn(\sigma)+\sgn(\tau)} \rdI_{\sigma(\lambda)/\tau(\mu)}.\]
\end{corollary}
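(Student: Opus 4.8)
The plan is to apply the involution $\psi$ to the identity in Theorem~\ref{thm:skewschurexpansion} and track how $\psi$ interacts with skew Schur functions on the one hand and with skew dual immaculate functions on the other. First I would recall the two relevant compatibility facts: by~\eqref{eqn:omegaF} and the fact that $\omega = \psi \circ \rho$ on $\QSym$, together with the standard identity $\omega(s_\mu) = s_{\mu'}$ in $\Sym \subset \QSym$, we have $\psi(s_{\lambda/\mu}) = \omega(s_{\lambda/\mu})$ up to the reversal $\rho$; more precisely, since the forgetful image lands in $\Sym$ where $\rho$ acts trivially (symmetric functions are fixed by $\rho$), the map $\psi$ restricted to $\Sym$ agrees with $\omega$, so $\psi(s_{\lambda/\mu}) = s_{\lambda'/\mu'}$. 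On the other side, by Theorem~\ref{the:skewRI} we have $\psi(\dI_{\alpha/\beta}) = \rdI_{\alpha/\beta}$ for any interval $\beta \subseteq \alpha$ in $\mathfrak{P}$.

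With these two facts in hand, the argument is essentially mechanical. Starting from
\[
s_{\lambda/\mu} = \sum_{\sigma \in S_{\ell(\lambda)}} (-1)^{\sgn(\sigma) + \sgn(\tau)} \dI_{\sigma(\lambda)/\tau(\mu)},
\]
I would apply $\psi$ to both sides. Linearity of $\psi$ pushes it through the sum; the scalars $(-1)^{\sgn(\sigma)+\sgn(\tau)}$ are unaffected; the left side becomes $s_{\lambda'/\mu'}$ by the symmetric-function compatibility above; and each term $\psi(\dI_{\sigma(\lambda)/\tau(\mu)})$ becomes $\rdI_{\sigma(\lambda)/\tau(\mu)}$ by Theorem~\ref{the:skewRI}. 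This yields exactly
\[
s_{\lambda'/\mu'} = \sum_{\sigma \in S_{\ell(\lambda)}} (-1)^{\sgn(\sigma)+\sgn(\tau)} \rdI_{\sigma(\lambda)/\tau(\mu)},
\]
which is the claimed corollary; the hypothesis that $\tau(\mu)$ is a composition carries over verbatim, and the convention $\dI_{\sigma(\lambda)/\tau(\mu)} = 0$ (hence $\rdI = 0$) when the shape is not a valid skew shape is preserved by $\psi$ since $\psi$ is linear and sends $0$ to $0$.

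The one point requiring genuine care — and the main obstacle — is verifying that $\psi(s_{\lambda/\mu}) = s_{\lambda'/\mu'}$ as quasisymmetric functions, i.e. that the action of the $\QSym$-involution $\psi$ on the image of $\Sym$ inside $\QSym$ really does coincide with the classical $\omega$ sending a skew Schur function to its conjugate. This is not literally stated in the excerpt, though $\omega(s_\lambda) = s_{\lambda'}$ and $\omega = \rho \circ \psi = \psi \circ \rho$ on $\QSym$ are, as is the fact that symmetric functions are fixed by $\rho$ (one can see this from $\rho(F_\alpha) = F_{\rev(\alpha)}$ together with the expansion $s_\lambda = \sum_T F_{\comp(\Des(T))}$ over standard Young tableaux, using that descent sets of SYT are closed under the appropriate symmetry, or simply cite that $\rho$ fixes $\Sym$ pointwise). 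Granting that, $\psi|_{\Sym} = \omega|_{\Sym}$, and the computation $\psi(s_{\lambda/\mu}) = \omega(s_{\lambda/\mu}) = s_{\lambda'/\mu'}$ goes through. If one prefers to avoid even this, an alternative is to dualize: pair both sides of Theorem~\ref{thm:skewschurexpansion} against arbitrary elements of $\NSym$, apply Proposition~\ref{prop:invariance-of-pairing-under-psi} together with Proposition~\ref{prop:chi-psi-Nsym} (which gives $\chi \circ \psi = \omega \circ \chi$ and hence controls how $\psi$ interacts with the forgetful map), and read off the identity; but the direct approach via $\psi(s_{\lambda/\mu}) = s_{\lambda'/\mu'}$ is cleaner and is the one I would write up.
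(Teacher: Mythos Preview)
Your proposal is correct and follows essentially the same approach as the paper, which simply states that the corollary is obtained by applying $\psi$ to both sides of Theorem~\ref{thm:skewschurexpansion}. Your additional justification that $\psi|_{\Sym}=\omega$ (via $\omega=\psi\circ\rho$ and the fact that $\rho$ fixes symmetric functions) and hence $\psi(s_{\lambda/\mu})=s_{\lambda'/\mu'}$ makes explicit a point the paper leaves implicit.
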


\section{Hook dual immaculate functions}\label{sec:hookdI}
Now that we have skew row-strict dual immaculate functions, we can define {\em hook dual immaculate functions} in a combinatorial manner analogous to the hook Schur functions~\cite{rem1984} and hook quasisymmetric Schur functions~\cite{MN2018}.  
\begin{definition}  Let $\mathcal{A} = \{1,2,\ldots, \ell\}$ and $\mathcal{A}'=\{1',2',\ldots,k'\}$ be two alphabets with $1<2<\cdots<\ell<1'<2'<\cdots<k'$.  Then a {\em semistandard hook immaculate tableau of shape $\alpha$} is a filling of the diagram of $\alpha$ with entries from $\mathcal{A}\cup \mathcal{A}'$ such that 
\begin{enumerate}
    \item the first column increases from bottom to top with the increase strict in $\mathcal{A}$ and weak in $\mathcal{A}',$ and
    \item each row increases from left to right, weakly in $\mathcal{A}$ and strictly in $\mathcal{A}'.$
\end{enumerate}
Denote the set of all semistandard hook immaculate tableaux of shape $\alpha$ by $HI_\alpha$. 

The content monomial of a \svwMar{semistandard hook immaculate tableau} $T$ is a monomial in two alphabets, $x_1,\ldots,x_\ell$ and $y_1,\ldots,y_k$, where 
\[z^T= \prod_{i\in \mathcal{A}\cup\mathcal{A'}} z_i^{\text{\# of $i$'s in $T$}}\] where $z_i = x_i$ if $i \in \mathcal{A}$ and $z_i = y_i$ if $i \in \mathcal{A}'$.
\end{definition}

\begin{example}
Let $\alpha = (3,1,2,4,3)$. Then $T$, as shown below, is a \svwMar{semistandard hook immaculate tableau} with content monomial \svw{$z^T=x_1^2x_2x_3^2y_1^3y_2y_3y_4^2y_5$.}

    \[T=\tableau{1'&2'&4'\\1'&3'&4'&5'\\3&1'\\2\\1&1&3}\]
    
\end{example}

\begin{definition}\label{def:hookdI}
The \svwMar{\emph{hook dual immaculate function}} indexed by $\alpha$ is 
\[\hdI_\alpha(X,Y)=\hdI_\alpha(x_1,\ldots,x_l,y_1,\ldots,y_k) = \sum_{T\in HI_\alpha}z^T.\]
\end{definition}

It follows immediately from the definition that 
\begin{equation} \label{hookdualimm}
\hdI_\alpha(X,Y) = \sum_{\gamma \subseteq \alpha} \dI_\gamma(X)\rdI_{\alpha/\gamma}\svw{(Y).}
\end{equation}

We can also expand $\hdI_\alpha(X,Y)$ in terms of the \svwMar{{super fundamental quasisymmetric functions}.}  We use the definition in~\cite{MN2018}. 
\begin{definition}\label{def:superfund} For $\alpha \vDash n$, the \svwMar{\emph{super fundamental quasisymmetric function} indexed by $\alpha$ is}
\[\tilde{Q}_\alpha(X,Y) = \sum_{\substack{a_1\leq a_2\leq \cdots \leq a_n\\ a_i = a_{i+1} \in \mathcal{A} \Rightarrow i \notin \set(\alpha)\\
a_i=a_{i+1} \in \mathcal{A}' \Rightarrow i \in \set(\alpha)}} z_{a_1}z_{a_2}\cdots z_{a_n},\]
where $z_a = x_a$ if $a \in \mathcal{A}$ and $z_{a'} = y_a$ for $a' \in \mathcal{A}'$.  
\end{definition}

\begin{theorem}{\cite[Theorem 4.1]{MN2018}}\label{thm:superfund} For $\alpha \vDash n$, 
\[\tilde{Q}_\alpha(X,Y) = \sum_{i=0}^n F_\beta(X) F_\gamma(Y)\]
where $\beta\cdot \gamma = \alpha$ if $i \in \set(\alpha)$ and $\beta\odot\gamma=\alpha$ if $i \notin \set(\alpha)$. 
\end{theorem}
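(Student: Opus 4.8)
This is \cite[Theorem 4.1]{MN2018}, and the plan is to prove it by a weight-preserving bijection between the monomials enumerated by $\tilde{Q}_\alpha(X,Y)$ and the monomials enumerated by $\sum_{i=0}^n F_\beta(X)F_\gamma(Y)$, where for each $i$ the pair $(\beta,\gamma)$ is the one with $|\beta|=i$, $\beta\cdot\gamma=\alpha$ when $i\in\set(\alpha)$, and $\beta\odot\gamma=\alpha$ when $i\notin\set(\alpha)$. The governing observation is that the total order on $\mathcal{A}\cup\mathcal{A}'$ places every letter of $\mathcal{A}$ strictly below every letter of $\mathcal{A}'$, so a weakly increasing word $a_1\le a_2\le\cdots\le a_n$ over $\mathcal{A}\cup\mathcal{A}'$ splits canonically at a single breakpoint, and that breakpoint will be the summation index $i$ on the right-hand side. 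Morally this recovers the coproduct-type shape of \eqref{eq:fundcoprod}, with the barred alphabet carrying the conjugate (row-strict) role built into Definition~\ref{def:superfund}.

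First I would take a monomial $z_{a_1}\cdots z_{a_n}$ contributing to $\tilde{Q}_\alpha(X,Y)$ and let $i\in\{0,1,\dots,n\}$ be the unique index with $a_1,\dots,a_i\in\mathcal{A}$ and $a_{i+1},\dots,a_n\in\mathcal{A}'$. Let $(\beta,\gamma)$ be the pair of compositions determined by $\set(\beta)=\set(\alpha)\cap\{1,\dots,i-1\}$ and $\set(\gamma)=\{\,s-i : s\in\set(\alpha),\ i<s<n\,\}$; comparing with the definitions of concatenation and near-concatenation shows $\beta\cdot\gamma=\alpha$ exactly when $i\in\set(\alpha)$ and $\beta\odot\gamma=\alpha$ exactly when $i\notin\set(\alpha)$, with $i=0$ and $i=n$ giving the degenerate pairs $(\emptyset,\alpha)$ and $(\alpha,\emptyset)$. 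Then I would translate the two defining clauses of Definition~\ref{def:superfund}. Restricted to $1\le j\le i-1$, the clause ``$a_j=a_{j+1}\in\mathcal{A}\Rightarrow j\notin\set(\alpha)$'' is, verbatim, the statement that $x_{a_1}\cdots x_{a_i}$ is a monomial of $F_\beta(X)$, since $\set(\alpha)$ and $\set(\beta)$ agree on that range. Reindexing the barred letters by $b_m=a_{i+m}$, the clause ``$a_j=a_{j+1}\in\mathcal{A}'\Rightarrow j\in\set(\alpha)$'' becomes the condition $b_m=b_{m+1}\Rightarrow m\in\set(\gamma)$, which is precisely the (mirrored) repetition rule that records $y_{b_1}\cdots y_{b_{n-i}}$ as a monomial of $F_\gamma(Y)$ in the barred-alphabet convention of \cite{MN2018}. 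Crucially no constraint is imposed at the seam $j=i$: since $a_i\in\mathcal{A}$ and $a_{i+1}\in\mathcal{A}'$ we have $a_i<a_{i+1}$ automatically, which is compatible with both $\beta\cdot\gamma$ and $\beta\odot\gamma$, so the single datum $i$ together with $\alpha$ already fixes which operation produced $(\beta,\gamma)$.

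Finally I would run the correspondence backward: given $i$ and a monomial $x_{a_1}\cdots x_{a_i}$ of $F_\beta(X)$ and a monomial $y_{b_1}\cdots y_{b_{n-i}}$ of $F_\gamma(Y)$, the concatenated word consisting of the $\mathcal{A}$-letters followed by the $\mathcal{A}'$-letters is automatically weakly increasing over $\mathcal{A}\cup\mathcal{A}'$ and satisfies both clauses of Definition~\ref{def:superfund}, by the same dictionary; this is inverse to the forward map and preserves the $z$-weight. Summing over $i=0,\dots,n$ yields the asserted identity. The main obstacle — essentially the only nontrivial point — is the middle step: one must verify that the lone parameter $i$ simultaneously (a) selects the correct $(\beta,\gamma)$ via the concatenation/near-concatenation dichotomy keyed to whether $i\in\set(\alpha)$, and (b) converts the two mirror-image repetition conditions of $\tilde{Q}_\alpha$ into the $F_\beta$- and $F_\gamma$-conditions respectively, which in turn requires getting the index shift in $\set(\gamma)=\{\,s-i:s\in\set(\alpha),\ i<s<n\,\}$ exactly right.
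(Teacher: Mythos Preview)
The paper does not give its own proof of this statement: Theorem~\ref{thm:superfund} is quoted verbatim from \cite[Theorem 4.1]{MN2018} and used as a black box. So there is no argument in the paper to compare your proposal against.

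Your bijective argument is the natural one and is essentially correct. The splitting at the unique index $i$ separating $\mathcal{A}$-letters from $\mathcal{A}'$-letters, the identification of $(\beta,\gamma)$ via $\set(\beta)=\set(\alpha)\cap\{1,\dots,i-1\}$ and $\set(\gamma)=\{s-i:s\in\set(\alpha),\ i<s<n\}$, and the observation that no condition is needed at the seam are all right. The one point that deserves more than a passing remark is the one you flag yourself: after reindexing, the condition on the primed block reads $b_m=b_{m+1}\Rightarrow m\in\set(\gamma)$, which is the \emph{opposite} of the defining condition $b_m=b_{m+1}\Rightarrow m\notin\set(\gamma)$ for the ordinary $F_\gamma$. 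You resolve this by invoking ``the barred-alphabet convention of \cite{MN2018}'', and that is indeed the resolution --- in \cite{MN2018} the fundamental evaluated at the primed alphabet carries the complementary repetition rule --- but since the present paper only records the single Definition~\ref{def:superfund} and the standard definition of $F_\alpha$, a reader has no way to see this. If you want the proof to stand on its own in this paper, you should state explicitly what $F_\gamma(Y)$ means on the primed alphabet (equivalently, that it equals $F_{\gamma^c}$ in the usual sense), rather than deferring to an external convention.
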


\svwMar{As usual, we must have a standardization procedure for semistandard hook immaculate tableaux and an appropriate descent set to index the super fundamental quasisymmetric functions.  To {\em standardize} a semistandard hook immaculate tableau $T$, first replace the entries of $T$ from $\mathcal{A}$ by scanning unprimed entries from left to right, starting with the top row, replacing $1$'s as they are encountered by $1,2, \ldots$ in this reading order, followed by $2$'s, etc. Next continue with the entries of $\mathcal{A}'$ by scanning from right to left starting with the bottom row. The result is denoted by $\stdz(T)$.}

\begin{example}\label{ex:hookstandardize} The reading \svwMar{order} of $T$, as shown below, is $3,2,1,1,3,1',5',4',3',1',4',2', \svwMar{1'}$, giving rise to $\stdz(T)$ below.
\[T=\tableau{1'&2'&4'\\1'&3'&4'&5'\\3&1'\\2\\1&1&3} \qquad \stdz(T) = \tableau{8&9&12\\7&10&11&13\\4&6\\3\\1&2&5}\]
 \end{example}

Note that the standardization of a \svwMar{hook immaculate} tableau is a standard \svwMar{immaculate} tableau.  Recall that the descent set of a standard  \svwMar{immaculate} tableau $S$ is $\Des_{\dI}(S) = \{i:i+1$ is strictly above $i$ in $S\}$. The descent set for $\stdz(T)$ in Example~\ref{ex:hookstandardize} is $\Des_{\dI}(\stdz(T))=\svw{\{2,3,5,6, 7, 11\}}$.  From the definition of standardization, we note that if $T$ is a hook immaculate tableau of shape $\alpha$ with $T=S\cup U$ where $S$ is an immaculate tableau of shape $\beta$ and $U$ is a skew row-strict immaculate tableau of shape $\alpha/\beta$, then \[\Des_{\dI}(\stdz(T)) = \Des_{\dI}(\stdz(S)) \cup (\Des_{\rdI}(\stdz(U))^c+|\beta|)\]
if $|\beta|+1$ is weakly lower than $|\beta|$ in $\svw{\stdz(T)}$ and 
\[\Des_{\dI}(\stdz(T)) = \Des_{\dI}(\stdz(S)) \cup (\Des_{\rdI}(\stdz(U))^c+|\beta|)\cup \{|\beta|\}\]
if $|\beta|+1$ appears strictly above $|\beta|$ in $\stdz(T)$.

\begin{theorem}\label{thm:hookfundexp}
Let $\alpha \vDash n$.  Then 
\[\hdI_\alpha(X,Y) = \sum_{S} \tilde{Q}_{\comp(\Des_{\dI}(S))}(X,Y)\]
where the sum is over all standard \svwMar{immaculate} tableaux of shape $\alpha$. 
\end{theorem}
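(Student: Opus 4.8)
The plan is to prove the theorem by combining the combinatorial decomposition of hook immaculate tableaux with the super-fundamental expansion of Theorem~\ref{thm:superfund}, running the standardization argument of Theorem~\ref{thm:rsfunddecomp} in two pieces. First I would fix a standard dual immaculate tableau $S$ of shape $\alpha$ and unpack which monomials $z_{a_1}\cdots z_{a_n}$ appear in $\tilde{Q}_{\comp(\Des_{\dI}(S))}(X,Y)$. By Definition~\ref{def:superfund} these are weakly increasing words $a_1 \le \cdots \le a_n$ in $\mathcal{A} \cup \mathcal{A}'$ with the property that an equality $a_i = a_{i+1}$ of unprimed letters forces $i \notin \set(\comp(\Des_{\dI}(S)))$ (i.e. $i$ is not a descent of $S$), while an equality of primed letters forces $i$ to be a descent. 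The goal is to exhibit a content-preserving bijection between such pairs $(S, \text{word})$ and semistandard hook immaculate tableaux of shape $\alpha$.

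Next I would build the destandardization map: given $(S,a_1\le\cdots\le a_n)$, form a filling $H$ of the diagram of $\alpha$ by replacing the entry $k$ of $S$ with $a_k$. I need to check $H \in HI_\alpha$, and conversely that every $H$ arises this way from its standardization $\stdz(H)$ (which, as noted in the excerpt, is a standard dual immaculate tableau) together with the weakly-increasing rearrangement of its content. The column and row conditions of Definition~\ref{def:hookdI} split into four cases according to whether two comparable cells receive unprimed or primed letters, exactly mirroring the single-alphabet arguments in the proof of Theorem~\ref{thm:rsfunddecomp} and in the standardization discussion preceding this theorem: when $k$ and $k+1$ land in the same row, strictness in $\mathcal{A}'$ (resp. weakness in $\mathcal{A}$) is forced by whether $k$ is a descent of $S$; when they land in the first column, the increasing-primed-weakly / increasing-unprimed-strictly condition is likewise governed by the descent status. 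The precise dictionary between "$k$ is a descent of $S$'' and "the transition $a_k \to a_{k+1}$ is strict'' is what makes the four cases line up, and here I would lean on the two displayed formulas for $\Des_{\dI}(\stdz(T))$ in terms of $\Des_{\dI}(\stdz(S))$ and $\Des_{\rdI}(\stdz(U))^c$ stated just before the theorem.

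With the bijection in hand, summing over $S \in \SIT(\alpha)$ and over admissible words gives $\sum_S \tilde{Q}_{\comp(\Des_{\dI}(S))}(X,Y) = \sum_{H \in HI_\alpha} z^H = \hdI_\alpha(X,Y)$ by Definition~\ref{def:hookdI}, completing the proof. An alternative, perhaps cleaner route I would keep in reserve: apply $\Delta$-style bookkeeping directly to the identity $\hdI_\alpha(X,Y) = \sum_{\gamma \subseteq \alpha}\dI_\gamma(X)\rdI_{\alpha/\gamma}(Y)$ from \eqref{hookdualimm}, expand $\dI_\gamma(X)$ via Theorem~\ref{thm:rsfunddecomp}-for-$\dI$ and $\rdI_{\alpha/\gamma}(Y)$ via Theorem~\ref{the:skewRI}, and then recognize each product $F_\beta(X)F_{\comp(A(P))}(Y)$ as a term of $\tilde{Q}$ using Theorem~\ref{thm:superfund}, with the $\beta\cdot\gamma$ versus $\beta\odot\gamma$ dichotomy corresponding precisely to whether $|\gamma|$ is a descent of the glued standard tableau $\stdz(T)$.

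The main obstacle I anticipate is the careful case analysis at the "seam'' — the cells with entries $|\gamma|$ and $|\gamma|+1$ where the unprimed part meets the primed part. This is exactly where the descent set of the glued standard tableau picks up (or does not pick up) the extra index $|\gamma|$, and it is the content of the two separate displayed formulas for $\Des_{\dI}(\stdz(T))$ preceding the theorem. Getting the $\set(\comp(\Des_{\dI}(S)))$ membership conditions in Definition~\ref{def:superfund} to match the strict/weak transition at the seam, in both the "$\beta\cdot\gamma$'' and "$\beta\odot\gamma$'' cases of Theorem~\ref{thm:superfund}, is the step that requires the most care; everything else is a routine translation of the single-alphabet argument already carried out in Theorem~\ref{thm:rsfunddecomp}.
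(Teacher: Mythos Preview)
Your primary approach---the destandardization bijection between pairs $(S,\text{word})$ and hook immaculate tableaux---is exactly what the paper does, and your case analysis via the descent/non-descent status of $S$ is the same mechanism the paper invokes. One small comment: the ``seam'' you flag as the main obstacle is in fact a non-issue for this direct argument, because Definition~\ref{def:superfund} only imposes constraints at indices $i$ with $a_i=a_{i+1}$, and at the unprimed/primed boundary one always has a strict inequality $a_k<b_1'$; the paper's proof accordingly treats the unprimed and primed blocks separately with no seam case at all. (The seam \emph{would} require care in your alternative route via \eqref{hookdualimm} and Theorem~\ref{thm:superfund}, which the paper does not pursue.)
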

\begin{proof}  We show that each polynomial consists of the same monomials.  Suppose\\ $x_{a_1}\cdots x_{a_k}y_{b_1}\cdots y_{b_m}$ is the content monomial associated with a hook immaculate tableau $T$ of shape $\alpha$ with $a_1\leq a_2\leq \cdots \leq a_k$ and $b_1\leq b_2\leq \cdots \leq b_m$.  Note that if $a_i = a_{i+1}$, then $i \notin \Des_{\dI}(\stdz(T))$ by the standardization procedure.  Similarly, if $b_i = b_{i+1}$, $i+k \in \Des_{\dI}(\stdz(T))$, since $b_i'$ must occur in a lower \svw{row} of $T$ than $b_{i+1}'$.  Thus $x_{a_1}\cdots x_{a_k}y_{b_1}\cdots y_{b_m}$ is a monomial in $\tilde{Q}_{\comp(\Des_{\dI}(\stdz(T)))}(X,Y)$.  

Now suppose $x_{a_1}\cdots x_{a_k}y_{b_1}\cdots y_{b_m}$ is a monomial in $\tilde{Q}_{\comp(\Des_{\dI}(S))}(X,Y)$ for some standard immaculate tableau $S$ of shape $\alpha$.  We must show that there exists a hook immaculate tableau with content $a_1,\ldots,a_k$, $b_1',\ldots,b_m'$.  We do this by replacing $n$ in $S$ with $b_m'$, $n-1$ in $S$ with $b_{m-1}'$ and so on.  Since $b_i=b_{i+1}$ implies that $i+k \in \Des_{\dI}(S)$, we have that each primed entry in a row is distinct and increasing from left to right.  Similarly, if $a_i=a_{i+1}$, then $i \notin \Des_{\dI}(S)$, guaranteeing that the first column \svw{is increasing bottom to top and} has distinct unprimed entries.  Thus the result is a hook immaculate tableau of content $x_{a_1}\cdots x_{a_k}y_{b_1}\cdots y_{b_m}$. \end{proof}

Berele and Regev~\cite{BR1987} defined hook Schur functions indexed by a partition $\lambda$ as 
\[\mathcal{H}s_\lambda(X,Y)=\sum_{\mu\subseteq \lambda} s_\mu(X)s_{\lambda'/\mu'}(Y).\]
We have the following analogue of Theorem~\ref{thm:psiresults}, \svw{Point} \eqref{schurtodI}.
\begin{theorem}\label{thm:hookexpansion}
Let $\lambda$ be a partition.  Then 
\[\mathcal{H}s_\lambda(X,Y) = \sum_{\tau \in S_{\ell(\lambda)}} (-1)^{\sgn(\tau)} \hdI_{\tau(\lambda)}(X,Y).\]
\end{theorem}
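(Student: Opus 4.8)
The plan is to recognize both sides of the identity as the image of one linear map applied to a known expansion, so that the theorem reduces to Point~\eqref{schurtodI} of Theorem~\ref{thm:psiresults}. Introduce the $\mathbb{Q}$-linear map $\Phi\colon \QSym\otimes\QSym\to\mathbb{Q}[[x_1,x_2,\ldots;y_1,y_2,\ldots]]$ determined by $\Phi(f\otimes g)=f(X)\,\psi(g)(Y)$, and consider the linear operator $\Phi\circ\Delta$. I claim the two identifications
\[\hdI_\alpha(X,Y)=(\Phi\circ\Delta)(\dI_\alpha)\quad\text{for every composition }\alpha,\qquad\text{and}\qquad \mathcal{H}s_\lambda(X,Y)=(\Phi\circ\Delta)(s_\lambda),\]
where in the second identity $s_\lambda$ is viewed in $\QSym$ via the Hopf embedding $\Sym\hookrightarrow\QSym$. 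Granting them, applying $\Phi\circ\Delta$ to the identity $s_\lambda=\sum_{\sigma\in S_{\ell(\lambda)}}(-1)^{\sgn(\sigma)}\dI_{\sigma(\lambda)}$ of Point~\eqref{schurtodI} gives at once $\mathcal{H}s_\lambda(X,Y)=\sum_{\sigma\in S_{\ell(\lambda)}}(-1)^{\sgn(\sigma)}\hdI_{\sigma(\lambda)}(X,Y)$, which (renaming $\sigma$ as $\tau$) is the theorem, under the usual convention $\dI_{\sigma(\lambda)}=\hdI_{\sigma(\lambda)}=0$ when $\sigma(\lambda)$ is not a composition; this is consistent because then $(\Phi\circ\Delta)(\dI_{\sigma(\lambda)})=(\Phi\circ\Delta)(0)=0$.

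The first identification is immediate from what is already in hand. By~\eqref{hookdualimm}, $\hdI_\alpha(X,Y)=\sum_{\gamma\subseteq\alpha}\dI_\gamma(X)\,\rdI_{\alpha/\gamma}(Y)$; by Theorem~\ref{the:skewRI}, $\rdI_{\alpha/\gamma}=\psi(\dI_{\alpha/\gamma})$; and by Definition~\ref{def:skewdIhopf} together with Lemma~\ref{lem:skewdi}, $\Delta\dI_\alpha=\sum_\gamma\dI_\gamma\otimes\dI_{\alpha/\gamma}$, where the terms with $\gamma\not\subseteq\alpha$ vanish. Applying $\Phi$ to this coproduct reproduces precisely $\sum_{\gamma\subseteq\alpha}\dI_\gamma(X)\,\rdI_{\alpha/\gamma}(Y)=\hdI_\alpha(X,Y)$.

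For the second identification, start from the Berele--Regev description $\mathcal{H}s_\lambda(X,Y)=\sum_{\mu\subseteq\lambda}s_\mu(X)\,s_{\lambda'/\mu'}(Y)$ from~\cite{BR1987}, and recall the classical facts in $\Sym$ that $\Delta s_\lambda=\sum_\mu s_\mu\otimes s_{\lambda/\mu}$ (the same coproduct already used in the proof of Theorem~\ref{thm:skewschurexpansion}, and valid in $\QSym$ since $\Sym\hookrightarrow\QSym$ respects $\Delta$) and $\omega(s_{\lambda/\mu})=s_{\lambda'/\mu'}$. It then remains only to check that $\psi$ restricts on $\Sym$ to the classical involution $\omega$: since $h_n=\sum_{\beta\vDash n}M_\beta=F_{(n)}$ in $\QSym$, we get $\psi(h_n)=F_{(n)^c}=F_{(1^n)}=M_{(1^n)}=e_n=\omega(h_n)$, and as $\psi$ is an algebra automorphism of $\QSym$ and the $h_n$ generate $\Sym$ this forces $\psi|_{\Sym}=\omega$ (equivalently, $\omega=\rho\circ\psi$ with $\rho$ fixing $\Sym$). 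Therefore $\mathcal{H}s_\lambda(X,Y)=\sum_\mu s_\mu(X)\,\omega(s_{\lambda/\mu})(Y)=\sum_\mu s_\mu(X)\,\psi(s_{\lambda/\mu})(Y)=\Phi\Bigl(\textstyle\sum_\mu s_\mu\otimes s_{\lambda/\mu}\Bigr)=(\Phi\circ\Delta)(s_\lambda)$.

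The main obstacle is exactly this last point: identifying the ``$Y$-half'' of the Berele--Regev expansion, which carries the conjugate skew shape $\lambda'/\mu'$, with the $\psi$-image of the skew dual immaculate function $\dI_{\lambda/\mu}$, which rests on the observation $\psi|_{\Sym}=\omega$. Otherwise the only care required is with degenerate indices, where $\sigma(\lambda)$ acquires a nonpositive entry and the summand vanishes on both sides, and (if one is scrupulous about finite alphabets) with the standard fact that all the identities involved hold as formal power series in countably many variables and restrict to any finite truncation. Once the two ``$\Phi\circ\Delta$'' descriptions are in place, the theorem is pure linearity applied to Point~\eqref{schurtodI}.
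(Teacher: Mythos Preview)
Your argument is correct and is genuinely different from the paper's.  The paper proceeds by a direct computation: it starts from the Berele--Regev sum $\sum_{\mu}s_\mu(X)s_{\lambda'/\mu'}(Y)$, expands each $s_\mu(X)$ via Point~\eqref{schurtodI}, uses~\eqref{eqn:skewschur} to rewrite $s_{\lambda'/\mu'}$ as $(-1)^{\sgn(\sigma)}s_{\lambda'/\sigma(\mu)'}$, then invokes Corollary~\ref{cor:schurexprsdI} to expand $s_{\lambda'/\sigma(\mu)'}(Y)$ as a signed sum of $\rdI_{\tau(\lambda)/\sigma(\mu)}(Y)$, and finally reindexes by compositions $\beta=\sigma(\mu)$ and recognizes~\eqref{hookdualimm}.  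Your route instead packages everything into the single linear operator $\Phi\circ\Delta$ and reduces the whole theorem to Point~\eqref{schurtodI} by two clean identifications.  The key observation that makes this work, $\psi|_{\Sym}=\omega$, is not stated explicitly in the paper but follows immediately from $\psi(F_{(n)})=F_{(1^n)}$, i.e.\ $\psi(h_n)=e_n$, together with multiplicativity.  What you gain is a shorter, more conceptual proof that bypasses Theorem~\ref{thm:skewschurexpansion}, Corollary~\ref{cor:schurexprsdI}, and~\eqref{eqn:skewschur} entirely; what the paper's proof gains is that it stays within the explicit skew machinery already developed in Section~4, so no new observation about $\psi$ on $\Sym$ is needed.
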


\begin{proof}
Let $\lambda$ be a partition.  Then by \svwMar{Theorem~\ref{thm:psiresults}, {Point} \eqref{schurtodI} and Corollary~\ref{cor:schurexprsdI}},
\begin{align}
\mathcal{H}s_\lambda(X,Y) &= \sum_{\mu \subseteq \lambda} s_\mu(X) s_{\lambda'/\mu'}(Y)\nonumber\\
&=\sum_{\mu\subseteq\lambda}\left(\sum_{\sigma \in S_{\ell(\mu)}}(-1)^{\sgn(\sigma)}\dI_{\sigma(\mu)}(X)s_{\lambda'/\mu'}(Y)\right)\nonumber\\
&= \sum_{\mu\subseteq\lambda}\left(\sum_{\sigma \in S_{\ell(\mu)}}\dI_{\sigma(\mu)}(X)\sum_{\tau \in S_{\ell(\lambda)}}(-1)^{\sgn(\tau)}\rdI_{\tau(\lambda)/\sigma(\mu)}(Y)\right)\nonumber\\
&=\sum_{\tau \in S_{\ell(\lambda)}} (-1)^{\sgn(\tau)} \left(\sum_{\mu \subseteq\lambda} \sum_{\sigma \in S_{\ell(\mu)}} \dI_{\sigma(\mu)}(X)\rdI_{\tau(\lambda)/\sigma(\mu)}(Y)\right).\label{eqn:hookschurproof}
\end{align}
Note that the only terms $\sigma(\mu)$ that appear in~\eqref{eqn:hookschurproof} are those such that $\sigma(\mu)=\beta$ for a composition $\beta$.  We rewrite~\eqref{eqn:hookschurproof} as 
\begin{align*}
\mathcal{H}s_\lambda(X,Y) &=\sum_{\tau \in S_{\ell(\lambda)}} (-1)^{\sgn(\tau)} \left(\sum_{\mu \subseteq\lambda} \sum_{\sigma \in S_{\ell(\mu)}} \dI_{\sigma(\mu)}(X)\rdI_{\tau(\lambda)/\sigma(\mu)}(Y)\right)\\
&=\sum_{\tau \in S_{\ell(\lambda)}}(-1)^{\sgn(\tau)}\sum_{\svwMar{\beta \subseteq \tau(\lambda)}}\dI_\beta(X) \rdI_{\tau(\lambda)/\beta}(Y)\\
&=\sum_{\tau \in S_{\ell(\lambda)}}(-1)^{\sgn(\tau)} \hdI_{\tau(\lambda)}(X,Y). \qedhere
\end{align*}
\end{proof}

\bibliographystyle{plain}
\bibliography{2023April2AIAMFINAL}

\end{document}